\documentclass[10pt, a4paper,english]{amsart}
\usepackage[utf8]{inputenc}
\usepackage[english]{babel}
\usepackage{amsmath,amsthm,amssymb,tikz}
\usepackage{bm} 
\usepackage{hyperref, enumerate}
\usepackage{graphicx}
\usepackage[all,2cell]{xy}
\usepackage{comment}
\usepackage{tikz-cd}
\usepackage{tikz}
\usepackage{adjustbox}

\newtheorem{theorem}{Theorem}[section]
\newtheorem{lemma}[theorem]{Lemma}
\newtheorem{proposition}[theorem]{Proposition}
\newtheorem{corollary}[theorem]{Corollary}
\theoremstyle{definition}
\newtheorem{definition}[theorem]{Definition}

\newtheorem{example}[theorem]{Example}
\newtheorem{assumption}[theorem]{Assumption}
\theoremstyle{remark}

\newtheorem{remark}{Remark}
\newtheorem{question}{Question}
\newtheorem{claim}{Claim}
\newcommand{\alg}[1]{\mathbf{#1}}
\newcommand{\cate}[1]{{\mathsf{#1}}}	
\newcommand{\var}[1]{\mathcal{#1}}
\newcommand{\func}[1]{\mathsf{#1}}
\newcommand{\lucas}{\text{\bf\L}}
\newcommand{\Z}{\mathbb{Z}}

\newcommand{\Stone}{\mathsf{Stone}}
\newcommand{\Set}{\mathsf{Set}}
\newcommand{\BA}{\mathsf{BA}}
\newcommand{\CABA}{\mathsf{CABA}}
\newcommand{\CAA}{\mathsf{CA}\var{A}}
\newcommand{\FLew}{\mathsf{FL}_{ew}}
\newcommand{\variety}[1]{\mathbb{H}\mathbb{S}\mathbb{P}(#1)}
\newcommand{\qvariety}[1]{\mathbb{I}\mathbb{S}\mathbb{P}(#1)}
\newcommand{\StoneL}{\mathsf{Stone}_{\alg{L}}}
\newcommand{\SetL}{\mathsf{Set}_{\alg{L}}}
\newcommand{\val}[1]{\mathbf{#1}}
\newcommand{\U}{\mathsf{U}}
\newcommand{\Vtop}{\mathsf{V}^\top}
\newcommand{\Vbot}{\mathsf{V}^\bot}
\newcommand{\C}{\mathsf{C}}
\newcommand{\M}{\mathfrak{P}}
\newcommand{\B}{\mathfrak{S}}
\newcommand{\Q}{\mathsf{Q}}
\newcommand{\I}{\mathsf{I}}
\newcommand{\V}{\mathsf{V}}
\newcommand{\K}{\mathsf{K}}
\newcommand{\Ind}{\mathsf{Ind}}
\newcommand{\Pro}{\mathsf{Pro}}
\newcommand{\pr}{\mathsf{pr}}
\newcommand{\im}{\mathsf{im}}
\newcommand{\id}{\mathsf{id}}

\author{Alexander Kurz}
\address{Chapman University, 1 University Drive, 92866 Orange, California, USA}
\email{akurz@chapman.edu}

\author{Wolfgang Poiger}
\address{University of Luxembourg, 6 Avenue de la Fonte, L-4364 Esch-sur-Alzette, Luxembourg}
\email{wolfgang.poiger@uni.lu \\ bruno.teheux@uni.lu}

\author{Bruno Teheux}

\title[New perspectives on semi-primal varieties]{New perspectives on semi-primal varieties}

\keywords{semi-primal algebras, primal algebras, ternary discriminator, stone duality, boolean skeleton, boolean power, canonical extension, universal algebra, category theory}

\subjclass[2020]{06E15, 06E75, 08A40, 08C05}

\begin{document}
\begin{abstract}
We study varieties generated by semi-primal lattice-expansions by means of category theory. We provide a new proof of the Keimel-Werner topological duality for such varieties and, using similar methods, establish its discrete version. We describe multiple adjunctions between the variety of Boolean algebras and the variety generated by a semi-primal lattice-expansion, both on the topological side and explicitly algebraic. In particular, we show that the Boolean skeleton functor has two adjoints, both defined by taking certain Boolean powers, and we identify properties of these adjunctions which fully characterize semi-primality of an algebra. Lastly, we give a new characterization of canonical extensions of algebras in semi-primal varieties in terms of their Boolean skeletons. 
\end{abstract}
\maketitle
\section{Introduction}
Primality and its variations are classical topics in universal algebra which were prominently studied during the second half of the 20th century \cite{Quackenbush1979, Werner1978,Burris1992}. During the 1950s, Foster introduced primal algebras in his generalized `Boolean' theory of universal algebras \cite{Foster1953a, Foster1953b}. Generalizing functional completeness of the two-element Boolean algebra, an algebra $\alg{P}$ is \emph{primal} if every operation $f\colon P^n \to P$ is term-definable in $\alg{P}$. The intuition that a primal algebra $\alg{P}$ is `close to' the two-element Boolean algebra $\alg{2}$ was confirmed by Hu's theorem \cite{Hu1969,Hu1971}, which states that a variety $\var{V}$ is categorically equivalent to the variety $\BA$ of Boolean algebras (generated by $\alg{2}$) if and only if $\var{V}$ is generated by a primal algebra $\alg{P}\in \var{V}$. 

In 1964, Foster and Pixley introduced the first variation of primality, which they called \emph{semi-primality} \cite{FosterPixley1964a}. Unlike primal algebras, a semi-primal algebra may have proper subalgebras. Accordingly, in a semi-primal algebra $\alg{L}$, we only require the operations $f\colon L^n\to L$ which \emph{preserve subalgebras} to be term-definable in $\alg{L}$. Semi-primal varieties (that is, varieties of the form $\variety{\alg{L}}$ where $\alg{L}$ is semi-primal) are well-understood from the viewpoint of `classical' universal algebraic structure theory \cite{FosterPixley1964a, FosterPixley1964b, Foster1967,MooreYaqub1968} as well as from the viewpoint of duality theory \cite{KeimelWerner1974,ClarkDavey1998}. From the perspective of category theory, semi-primal varieties were classified up to Morita equivalence in \cite{Bergman1996} - however, this is done using purely algebraic tools based on \cite{McKenzie1996}. In this paper, we further advance the category theoretical study of semi-primality by putting a semi-primal variety $\var{A}$ in relationship with other varieties, in particular with the primal variety $\BA$. Although Hu's theorem implies that the varieties are usually not categorically equivalent, we demonstrate that, nevertheless, there is a rich relationship between $\var{A}$ and $\BA$. In particular we explicate the intuition that semi-primal algebras are still `close to' the two-element Boolean algebra.

More specifically, we investigate multiple adjunctions between $\BA$ and the variety $\var{A}$ generated by a semi-primal algebra $\alg{L}$ with an \emph{underlying bounded lattice} (see Assumption \ref{assumption}). For one, this assumption yields a useful characterization of semi-primality via certain unary terms (see Proposition \ref{SP-char-Ts}) which we prominently use. Furthermore, since $\alg{L}$ has no one-element subalgebras, the dual category of $\var{A}$ has a particularly simple description (see Definition \ref{def:StoneL}). Apart from these advantages, the restriction to lattice-based algebras is motivated by the connection to many-valued logic. If we consider $\alg{L}$ as an algebra of propositional truth-degrees, an underlying bounded lattice is a reasonable assumption. For example, Maruyama \cite{Maruyama2012} generalized Jónsson-Tarski duality to modal extensions of semi-primal algebras with bounded lattice reducts. We plan to demonstrate applications of our results to many-valued (coalgebraic) modal logic in subsequent work. Since, in addition, there are already plenty of examples of such algebras (see Subsection \ref{subs:semi-primal-examples}), it is reasonable to stick to this framework. 

Although we were mainly motivated by questions arising in logic, we particularly hope that this paper will be of interest to algebraists interested in category theory as well as to category theorists interested in universal algebra. Let us point out that, in this paper, the category theoretical approach to universal algebra is different from other common ones via Lawvere theories or monads (these are well-exposed in \cite{HylandPower2007}).
Indeed, this paper is not about reformulating and generalizing algebraic concepts into categorical language, but rather to apply category theory as a tool to gain new insight into a concrete topic in universal algebra. For example, the fact that the variety $\var{A}$ is the completion of the full subcategory of its finite members $\var{A}^\omega$ under filtered colimits (i.e., $\var{A} \simeq \Ind(\var{A}^\omega)$) can be helpful to make the step from finite to infinite, for example to extend functors defined on $\var{A}^\omega$ to the full variety $\var{A}$ in a canonical way. Motivated by \cite{Johnstone1982}, we furthermore use this fact to give a new proof of the semi-primal duality \cite{KeimelWerner1974,ClarkDavey1998} by lifting the corresponding finite duality (see Theorem \ref{thm:ProSetL=StoneL} and Theorem \ref{thm:IndSetL}). By replacing $\Ind(\var{A}^\omega)$ by $\Pro(\var{A}^\omega)$, the closure under cofiltered limits, we prove the discrete version of the duality (resembling the duality between $\Set$ and the category $\CABA$ of complete atomic Boolean algebras) in a similar manner.             

The paper is organized as follows. In Section \ref{sec:semiprimalalgs} we recall well-known results about semi-primal algebras and the varieties they generate. In particular, we discuss semi-primal bounded-lattice expansions and provide examples thereof. In Section \ref{sec:semi-primal duality} we describe the topological duality for semi-primal algebras and, as mentioned above, provide an alternative proof for it. Arguably the most important results of the paper are exposed in Section \ref{sec:adcuntions}, where we describe a chain of four adjoint functors between $\var{A}$ and $\BA$ (see Figure \ref{fig:adjunctions}). Most prominently, the adjunction $\B \dashv \M$ is described in detail, first via duality and then explicitly algebraically (see Theorem \ref{thm:MadjointB}). The \emph{Boolean skeleton} $\B\colon \var{A}\to \BA$ has, for example, been known for $\mathsf{MV}_n$-algebras \cite{Cignoli2000} and was generalized to arbitrary semi-primal bounded lattice expansions by Maruyama \cite{Maruyama2012}. Its right-adjoint $\M\colon \BA \to \var{A}$ relies on the construction of a \emph{Boolean power} \cite{Burris1975}, a certain Boolean product \cite{BurrisSankappanavar1981} which was already introduced for arbitrary finite algebras in Foster's original paper on primality \cite{Foster1953a}. In the case where $\alg{L}$ is primal, we retrieve a concrete categorical equivalence witnessing Hu's theorem (see Corollary \ref{cor:hu}). We proceed to investigate the \emph{subalgebra adjunctions}, which exist for each subalgebra $\alg{S} \leq \alg{L}$. We manage to trace them back to the adjunction $\B \dashv \M$ after taking an appropriate inclusion/quotient (see Theorem \ref{thm:subadjunction}). In particular, we illustrate why the subalgebra adjunction $\Q \dashv \I$ corresponding to the smallest subalgebra of $\alg{L}$ is of special interest. Indeed, towards the end of Section \ref{sec:adcuntions} we also show that the existence of an adjoint situation resembling $\I \dashv \B \dashv \M$ fully characterizes semi-primality of a lattice-based algebra (see Theorem \ref{thm:adjunctionSemiCharact}). Building on the results of Section \ref{sec:adcuntions}, in Section \ref{sec:canext} we prove the above-mentioned discrete duality for $\Pro(\var{A}^\omega)$. It is well-known that the algebras in this category correspond to the \emph{canonical extensions} \cite{GehrkeJonsson2004,DaveyPriestley2012} of algebras in $\var{A}$. Notably, we show that these canonical extensions may be characterized almost purely in terms of their Boolean skeletons (see Theorem \ref{thm:ProACAA}). Lastly we connect Sections \ref{sec:adcuntions} and \ref{sec:canext} by describing an analogue of the Stone-Čech compactification in our setting (see Proposition \ref{prop:StoneCech}).   

We summarize our results schematically in Section \ref{sec:conclusion} (see Figure \ref{fig:summary}). In addition to the logical ramifications already mentioned, we believe that there are more potential ways to follow up our results. In particular, we hope to inspire further research in universal algebra through the lens of category-theory. Some open questions directly related to the content of this paper are also collected in Section \ref{sec:conclusion}. 
\section{Semi-primal algebras and the varieties they generate}\label{sec:semiprimalalgs}
In the 1950s, Foster introduced the concept of primality in \cite{Foster1953a, Foster1953b}, generalizing functional completeness of the two-element Boolean algebra $\alg{2}$. 
A finite algebra $\alg{L}$ is called \emph{primal} if, for all $n\geq 1$, every function $f\colon L^n \to L$ is term-definable in $\alg{L}$. 
Besides the two-element Boolean algebra $\alg{2}$, the $(n+1)$-element Post chain $\alg{P}_n$ and the field of prime order $\Z/p\Z$ with $0$ and $1$ as constants are some famous examples of primal algebras.

Using Stone duality, Hu \cite{Hu1969,Hu1971} showed that a variety $\var{A}$ is generated by a primal algebra (in other words, $\mathcal{A} = \variety{\alg{L}}$ for some primal algebra $\alg{L}$) if and only if $\var{A}$ is categorically equivalent to the variety of Boolean algebras $\BA$ (see also \cite{Porst2000} for a treatment using Lawvere theories). Of course we don't expect any more meaningful category theoretical results about the relationship between $\mathcal{A}$ and $\BA$ in this case. One purpose of this paper is to demonstrate that, in contrast, such results \emph{do} arise as soon as we assume that $\alg{L}$ is  semi-primal.   
\subsection{Characterizations of semi-primality}\label{subsection: semi-primal-algebras}
Since Foster's original work, many variations of primality have been introduced (for overviews see, \emph{e.g.}, \cite{Quackenbush1979, KaarliPixley2001}). Among them, intuitively speaking, semi-primality seems to still be rather close to primality (a central theme of this paper is to show why this intuition is justified). In a slogan: \emph{semi-primal algebras are like primal algebras which allow subalgebras}.     

Note that a primal algebra $\alg{L}$ does not have any proper subalgebra $\alg{S} \lneqq \alg{L}$. Otherwise, picking any $s\in S$ and $\ell \in L{\setminus} S$, no function $f: L \to L$ with $f(s) = \ell$ can possibly be term-definable.  
 
Semi-primality, introduced by Foster and Pixley in 1964 (see \cite{FosterPixley1964a}) does not impose this restriction. Recall that a function  $f \colon L^n \to L$ \emph{preserves subalgebras} if $f(a_1, \ldots, a_n)$ is in the subalgebra  generated by $\{a_1, \ldots, a_n\}$ for any choice of $a_1, \ldots, a_n \in L$. Clearly, if a function is term-definable, then it preserves subalgebras. In semi-primal algebras, the converse also holds. 

\begin{definition}
A finite algebra $\alg{L}$ is \emph{semi-primal} (sometimes also called \emph{subalgebra-primal}) if for every $n\geq 1$, every function $f\colon L^n\to L$ which preserves subalgebras is term-definable in $\alg{L}$.
\end{definition}
For example, the field of prime-order $\mathbb{Z}/p\mathbb{Z}$ with only $0$ as constant is semi-primal but not primal anymore - it now has $\{ 0 \}$ as proper subalgebra. More interesting examples are described in detail in Subsection \ref{subs:semi-primal-examples}. 
In the following we recall two well-known equivalent characterizations of semi-primality. The first one is based on the ternary discriminator term and the second one is based on the existence of a majority term. 

First we recall the characterization of semi-primal algebras as special instances of \emph{discriminator algebras}. These are the algebras in which the \emph{ternary discriminator}
$$ t(x,y,z) = \begin{cases}
z & \text{ if } x = y \\
x & \text{ if } x \neq y
\end{cases} $$ 
is term-definable. Finite discriminator algebras are also called \emph{quasi-primal}. 

An \emph{internal isomorphism} of $\alg{L}$ is an isomorphism $\varphi\colon \alg{S}_1 \to \alg{S}_2$ between any two (not necessarily distinct) subalgebras $\alg{S}_1$ and $\alg{S}_2$ of $\alg{L}$. For example, if $\alg{S} \leq \alg{L}$ is a subalgebra, then the identity $id_S$ is an internal isomorphism of $\alg{L}$. In semi-primal algebras, there are no other internal isomorphisms. 

\begin{proposition}{\cite[Theorem 3.2.]{Pixley1971}}\label{SP-char-discriminator}
A finite algebra $\alg{L}$ is semi-primal if and only if it is quasi-primal and the only internal isomorphisms of $\alg{L}$ are the identities on subalgebras of $\alg{L}$.   
\end{proposition}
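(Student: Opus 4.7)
The plan is to handle each direction separately, invoking Pixley's characterization of quasi-primal algebras for the converse.

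For the forward direction, I would first observe that the ternary discriminator $t$ satisfies $t(a,b,c) \in \{a,c\} \subseteq \langle a,b,c \rangle$ for all $a,b,c\in L$, so $t$ preserves subalgebras and by semi-primality is term-definable; hence $\alg{L}$ is quasi-primal. To show that the only internal isomorphisms are identities, suppose for contradiction that $\varphi\colon \alg{S}_1 \to \alg{S}_2$ is an internal iso with $\varphi(a)=b\neq a$ for some $a\in S_1$. The key observation is that every term function $f$ on $\alg{L}$ is automatically $\varphi$-equivariant, i.e., $\varphi(f(\vec{x})) = f(\varphi(\vec{x}))$ for $\vec{x}\in S_1^n$, because $\varphi$ is a homomorphism and term functions commute with homomorphisms. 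I then construct a subalgebra-preserving unary function breaking this equivariance: pick $b' \in \langle b \rangle$ with $b'\neq b$, and define $f\colon L\to L$ by $f(b)=b'$ and $f(x)=x$ otherwise. This $f$ preserves subalgebras since $f(b)\in\langle b\rangle$ and $f$ is the identity elsewhere, but $\varphi(f(a))=\varphi(a)=b \neq b' = f(\varphi(a))$, contradicting the term-definability of $f$ guaranteed by semi-primality.

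For the converse, assume $\alg{L}$ is quasi-primal and the only internal isomorphisms are identities. Pixley's theorem characterizes the term operations of a quasi-primal algebra as exactly the finitary operations that preserve every subalgebra and every internal isomorphism. Since the only internal isos are identities on subalgebras, which any function trivially preserves, the iso-preservation condition is vacuous, and the term operations coincide with the subalgebra-preserving ones. Hence $\alg{L}$ is semi-primal.

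The main obstacle is the construction of the subalgebra-preserving, non-equivariant function in the forward direction: choosing $b' \in \langle b \rangle \setminus \{b\}$ requires $|\langle b \rangle| \geq 2$, which does not follow directly from the hypothesis. The degenerate case in which $\{a\}$ and $\{b\}$ are both singleton subalgebras must be handled separately, either by passing to functions of higher arity that exploit the interaction of $a$ and $b$ with other elements of $L$, or by appealing to additional structure (notably, under the bounded-lattice assumption adopted later in the paper, $\alg{L}$ admits no singleton subalgebras and this degenerate case disappears entirely).
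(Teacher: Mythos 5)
First, a point of comparison: the paper does not prove this proposition at all --- it is quoted from Pixley --- so your argument can only be judged on its own merits. Its skeleton is sound. The observation that the discriminator preserves subalgebras, hence is a term operation under semi-primality, is correct; the equivariance-breaking unary function $f$ (with $f(b)=b'\in\langle b\rangle\setminus\{b\}$ and $f=\mathrm{id}$ elsewhere) does the job whenever $\langle b\rangle\neq\{b\}$; and the converse via the preservation-theoretic description of the term clone of a quasi-primal algebra is correct, and is in fact the only direction the paper needs outside the lattice-based setting (Proposition \ref{SP-char-Ts}, the Appendix). Two small caveats there: preserving the identity isomorphism on a subalgebra $\alg{S}$ is not ``trivially'' satisfied by every function --- it is exactly preservation of $\alg{S}$ --- though this does not affect your conclusion, since you only need that subalgebra-preservation already implies preservation of all internal isomorphisms when these are identities; and note that this direction outsources essentially all the work to another substantial theorem of Pixley/Baker--Pixley, which is acceptable here precisely because the paper itself only cites the result.

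The degenerate case you flag is, however, a genuine gap, and your first proposed repair (higher-arity functions) cannot close it: read literally with the definitions used in this paper, the forward implication \emph{fails} when $\alg{L}$ has two distinct one-element subalgebras. Take $L=\{0,1\}$ with basic operations the ternary discriminator $t$ and the meet $\wedge$. Then $\{0\}$ and $\{1\}$ are subalgebras, so the map $0\mapsto 1$ is a non-identity internal isomorphism; yet the algebra is semi-primal in the functional sense: $x\vee y=t(x,x\wedge y,y)$ is a term, hence so is the median majority term, and every subalgebra of $\alg{L}^2$ is a product of subalgebras or a diagonal (the graph $\{(0,1)\}$ of the offending isomorphism \emph{is} the product $\{0\}\times\{1\}$, while sets such as $\{(0,1),(1,0)\}$ or the order relation are not closed under $\wedge$ resp.\ $t$), so the Baker--Pixley criterion quoted as Proposition \ref{SP-char-squaresubalg} applies and every subalgebra-preserving (here: idempotent) operation is a term operation. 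So no amount of higher-arity ingenuity will rescue the argument in that case; the case must be excluded by hypothesis. Your second suggested fix is therefore the right one: since $\varphi$ maps $\langle a\rangle$ isomorphically onto $\langle b\rangle$, the bad case is exactly ``two distinct singleton subalgebras'', which cannot occur under Assumption \ref{assumption} (every subalgebra contains $0\neq 1$), and with that standing assumption your proof of both directions is complete.
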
 
Secondly, we recall the characterization of semi-primality based on a majority term, which can be useful to generate examples (see, for example, \cite{DaveySchumann1991}). Recall that a \emph{majority term} is a ternary term $m(x,y,z)$ satisfying 
$$m(x,x,y) = m(x,y,x) = m(y,x,x) = x.$$ 
In particular, every lattice $\alg{L} = (L,\wedge,\vee)$ has a majority term given by the \emph{median}
$$m(x,y,z) = (x\wedge y) \vee (x\wedge z) \vee (y\wedge z).$$ 
\begin{proposition}{\cite[Theorem 7.2.]{BakerPixley1975}}\label{SP-char-squaresubalg}
A finite algebra $\alg{L}$ is semi-primal if and only if it has a majority term and every subalgebra of $\alg{L}^2$ is either the direct product of two subalgebras or the diagonal of a subalgebra of $\alg{L}$.
\end{proposition}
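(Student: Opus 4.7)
The plan is to establish both directions by combining Proposition~\ref{SP-char-discriminator} with the Baker--Pixley theorem on near-unanimity terms.

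For the forward direction, suppose $\alg{L}$ is semi-primal, so by Proposition~\ref{SP-char-discriminator} it admits the ternary discriminator $t$. I would first exhibit an explicit majority term built from $t$, for instance $m(x,y,z) := t(x,t(x,y,z),z)$, and verify the three identities $m(x,x,y) = m(x,y,x) = m(y,x,x) = x$ by routine case analysis on which of $x,y,z$ coincide. Next, given any subalgebra $\alg{R} \leq \alg{L}^2$, set $\alg{S}_i := \pi_i(\alg{R})$ so that $\alg{R}$ is a subdirect product of $\alg{S}_1$ and $\alg{S}_2$. Because the discriminator term descends to every subalgebra, each $\alg{S}_i$ is itself a discriminator algebra and hence simple. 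I then invoke the standard fact that a subdirect product of two simple algebras in a congruence-distributive variety is either the full direct product or the graph of an isomorphism $\varphi\colon \alg{S}_1 \to \alg{S}_2$. In the first case $\alg{R} = \alg{S}_1 \times \alg{S}_2$; in the second, $\varphi$ is an internal isomorphism of $\alg{L}$, so Proposition~\ref{SP-char-discriminator} forces $\alg{S}_1 = \alg{S}_2$ and $\varphi = \id$, making $\alg{R}$ the diagonal of $\alg{S}_1$.

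For the backward direction, assume $\alg{L}$ has a majority term and that every subalgebra of $\alg{L}^2$ is a product or a diagonal. Let $f\colon L^n \to L$ be a function preserving subalgebras; the goal is to show that $f$ is term-definable. The Baker--Pixley theorem asserts that in an algebra with a near-unanimity term of arity $k+1$, a function is a term operation precisely when it preserves every subalgebra of $\alg{L}^k$. Applied here with $k=2$, it is enough to check that $f$ preserves each subalgebra of $\alg{L}^2$. For a product $\alg{S}_1 \times \alg{S}_2$, the coordinate-wise application of $f$ produces a pair whose components lie in $S_1$ and $S_2$ respectively, because $f$ preserves subalgebras of $\alg{L}$. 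For a diagonal $\{(s,s) : s \in S\}$, both coordinates are equal and belong to $S$, so $f$ returns the same value in both coordinates and stays in $S$, hence on the diagonal. Thus $f$ is a term operation and $\alg{L}$ is semi-primal.

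The main obstacle lies in correctly invoking the two structural ingredients: the Baker--Pixley characterization of term operations via preservation of low-dimensional subalgebras, and the classification of subdirect products of simple algebras in a congruence-distributive variety. Both are classical but non-trivial; once they are available, and their hypotheses (the congruence-distributivity supplied by the majority term, and the simplicity inherited from the discriminator) have been checked, the rest of the argument amounts to careful bookkeeping on subalgebras of $\alg{L}^2$.
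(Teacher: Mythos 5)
The paper itself gives no proof of this proposition --- it is recalled from Baker and Pixley with a citation --- so there is nothing internal to compare against; your proposal is a self-contained reconstruction, and most of it is sound. The backward direction is correct as written: with a majority term, the Baker--Pixley theorem (a function on a finite algebra with a $(k+1)$-ary near-unanimity term is a term operation iff it preserves all subalgebras of $\alg{L}^k$, here with $k=2$) reduces everything to checking preservation of products and diagonals, which follows exactly as you say from $f$ preserving subalgebras of $\alg{L}$. Your candidate majority term $t(x,t(x,y,z),z)$ does satisfy the three identities (e.g.\ $m(x,x,y)=t(x,t(x,x,y),y)=t(x,y,y)=x$), though in the paper's lattice-based setting the median would serve just as well.

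One step of the forward direction, as literally stated, is false: it is not true that in a congruence-distributive variety a subdirect product of two simple algebras must be the full product or the graph of an isomorphism. Lattices form a congruence-distributive variety and the two-element lattice $\alg{2}$ is simple, yet the order relation $\{(0,0),(0,1),(1,1)\}$ is a subdirect subalgebra of $\alg{2}^2$ that is neither a product of subalgebras nor the graph of an isomorphism. The dichotomy you invoke needs congruence \emph{permutability} (Fleischer's lemma for subdirect products in congruence-permutable varieties). Fortunately this is available in your situation: discriminator varieties are arithmetical, or, staying within the paper, Proposition \ref{SP-char-simplearithmetic} already tells you that the variety generated by a semi-primal algebra is arithmetical, that every subalgebra of $\alg{L}$ is simple, and that the only internal isomorphisms are identities --- which is precisely the package your forward direction needs. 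With ``congruence-distributive'' replaced by ``arithmetical (hence congruence-permutable)'' and Fleischer's lemma cited in place of the purported standard fact, your argument goes through; the final appeal to Proposition \ref{SP-char-discriminator} to collapse the graph of an internal isomorphism to a diagonal is correct.
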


The structure of semi-primal varieties was already well-studied in the original work by Foster and Pixley during the 1960s. To stay self-contained, we recall some results about these varieties which will be of use for us later.   
  
\begin{proposition}{\cite[Theorem 4.2]{FosterPixley1964a}}
The variety $\var{A}$ generated by a semi-primal algebra $\alg{L}$ coincides with the quasi-variety generated by $\alg{L}$, that is $\var{A} = \qvariety{\alg{L}}.$
\end{proposition}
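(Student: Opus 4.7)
The inclusion $\qvariety{\alg{L}} \subseteq \variety{\alg{L}}$ is immediate, since $\mathbb{I}\mathbb{S}\mathbb{P} \subseteq \mathbb{H}\mathbb{S}\mathbb{P}$, so the content lies in the reverse inclusion. My plan is to identify the subdirectly irreducible members of $\variety{\alg{L}}$ as isomorphic copies of subalgebras of $\alg{L}$, and then conclude by Birkhoff's subdirect representation theorem that every $\alg{A}\in \variety{\alg{L}}$ embeds into a product of subalgebras of $\alg{L}$.

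The two main ingredients are both extracted from semi-primality. First, by Proposition~\ref{SP-char-discriminator}, $\alg{L}$ is quasi-primal, so the ternary discriminator $t$ is a term of $\alg{L}$. Since the identities defining $t$ are preserved by $\mathbb{H}$, $\mathbb{S}$, and $\mathbb{P}$, the variety $\variety{\alg{L}}$ is a discriminator variety; in particular it is congruence distributive, and every nontrivial subalgebra $\alg{S}\leq \alg{L}$ is simple, by the standard argument that any nontrivial congruence on a discriminator algebra is collapsed to the total congruence via $t$.

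Now I would pick $\alg{A} \in \variety{\alg{L}}$ and write it, by Birkhoff, as a subdirect product of subdirectly irreducible quotients $\alg{A}_i \in \variety{\alg{L}}$. Since $\variety{\alg{L}}$ is congruence distributive and $\alg{L}$ is finite (so every ultrapower of $\alg{L}$ is isomorphic to $\alg{L}$ itself), Jónsson's lemma gives $\alg{A}_i \in \mathbb{H}\mathbb{S}\mathbb{P}_U(\alg{L}) = \mathbb{H}\mathbb{S}(\alg{L})$. Hence each $\alg{A}_i$ is a homomorphic image of some subalgebra $\alg{S}\leq \alg{L}$, and simplicity of $\alg{S}$ forces $\alg{A}_i$ to be trivial or isomorphic to $\alg{S}$. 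After absorbing the trivial factors (either omitting them from the subdirect product or identifying them with a one-element subalgebra of $\alg{L}$ when one exists), $\alg{A}$ embeds subdirectly into a product of subalgebras of $\alg{L}$, so $\alg{A}\in \mathbb{I}\mathbb{S}\mathbb{P}(\alg{L})=\qvariety{\alg{L}}$.

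\textbf{Main obstacle.} The argument is essentially a concatenation of standard universal-algebraic tools, with the ternary discriminator from Proposition~\ref{SP-char-discriminator} doing all the heavy lifting (by giving simplicity of subalgebras and congruence distributivity of the variety in one stroke). The only real hinge is the application of Jónsson's lemma, which crucially uses both finiteness of $\alg{L}$ (to trivialise ultraproducts) and the congruence-distributivity coming from $t$; the only bookkeeping subtlety is handling the trivial subdirectly irreducible factor, which is harmless.
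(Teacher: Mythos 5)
Your argument is correct, but note that the paper itself offers no proof of this proposition at all: it simply cites \cite[Theorem 4.2]{FosterPixley1964a}, so the comparison is with the classical Foster--Pixley argument rather than with anything in the text. Their original 1964 proof predates J\'onsson's lemma (1967) and proceeds through their hands-on structure theory of semi-primal algebras (explicit subdirect decompositions built from the term operations), whereas you take the streamlined modern route: discriminator term $\Rightarrow$ congruence distributive (indeed arithmetical) variety with all subalgebras of $\alg{L}$ simple, then Birkhoff plus J\'onsson's lemma with $\mathbb{H}\mathbb{S}\mathbb{P}_U(\alg{L})=\mathbb{H}\mathbb{S}(\alg{L})$ for finite $\alg{L}$, forcing every subdirectly irreducible member of $\variety{\alg{L}}$ to be (isomorphic to) a subalgebra of $\alg{L}$. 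This buys a shorter proof and, as a by-product, the stronger statement that every member of $\variety{\alg{L}}$ embeds subdirectly into a product of subalgebras of $\alg{L}$, which is exactly the infinite analogue of Proposition \ref{Afin = PS(L)}. Two small points of hygiene: what is preserved by $\mathbb{H}$, $\mathbb{S}$, $\mathbb{P}$ is not ``being the discriminator'' (that is not an equational property and fails in non-simple members) but the Pixley identities $t(x,y,y)\approx t(x,y,x)\approx x$, $t(x,x,y)\approx y$ satisfied by $t$ in $\alg{L}$, which is all you need for congruence distributivity; and the bookkeeping about trivial factors is vacuous, since subdirectly irreducible algebras are nontrivial by definition (the trivial algebra is handled as the empty subdirect product and lies in $\qvariety{\alg{L}}$ anyway).
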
 
In addition to the characterizations above, there is a nice characterization of semi-primality of $\alg{L}$ in terms of $\var{A}$. Recall that a variety is called \emph{arithmetical} if it is congruence distributive and congruence permutable.
\begin{proposition}{\cite[Theorem 3.1]{FosterPixley1964b}}\label{SP-char-simplearithmetic}
A finite algebra $\alg{L}$ is semi-primal if and only if the variety generated by $\alg{L}$ is arithmetical, every subalgebra of $\alg{L}$ is simple, and the only internal isomorphisms of $\alg{L}$ are the identities of subalgebras.  
\end{proposition}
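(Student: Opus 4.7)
The plan is to prove both directions by leveraging the characterizations of Propositions \ref{SP-char-discriminator} and \ref{SP-char-squaresubalg}.

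For the forward direction, assume $\alg{L}$ is semi-primal. Proposition \ref{SP-char-discriminator} immediately provides the ternary discriminator $t$ as a term of $\alg{L}$ and the statement about internal isomorphisms. Arithmeticity of $\var{V}(\alg{L})$ follows by exhibiting a Pixley term: a direct verification shows that $p(x,y,z) = t(t(x,y,z), y, x)$ satisfies $p(x,y,x) = x$, $p(x,x,y) = y$, and $p(x,y,y) = x$, which by Pixley's classical Mal'cev condition gives both congruence distributivity and congruence permutability. Finally, each subalgebra $\alg{S} \leq \alg{L}$ inherits the discriminator identities, and any algebra with a term-definable discriminator is simple: if $a\,\theta\,b$ for some congruence $\theta$ on $\alg{S}$ and distinct $a,b \in S$, then $c = t(a,a,c)\,\theta\,t(a,b,c) = a$ for every $c \in S$, so $\theta$ is the total relation.

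For the reverse direction, assume the three conditions hold; the goal is to verify the criterion of Proposition \ref{SP-char-squaresubalg}. A majority term is obtained from the Pixley term $p$ guaranteed by arithmeticity, for instance via $m(x,y,z) = p(x, p(x,y,z), z)$, which an easy check confirms satisfies the majority identities. For the structural condition, let $\alg{R} \leq \alg{L}^2$ and set $\alg{S}_i = \pi_i(\alg{R}) \leq \alg{L}$, so that $\alg{R}$ is a subdirect product of $\alg{S}_1$ and $\alg{S}_2$. Congruence permutability allows me to invoke Fleischer's classical description of subdirect products: $\alg{R}$ is the pullback of two surjections $\varphi_i \colon \alg{S}_i \twoheadrightarrow \alg{T}$ onto a common quotient $\alg{T}$. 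Since each $\alg{S}_i$ is simple, each $\varphi_i$ is either an isomorphism or has one-element codomain. If either $\varphi_i$ has one-element codomain, then $\alg{R} = \alg{S}_1 \times \alg{S}_2$ is the direct product of two subalgebras; if both are isomorphisms, then $\varphi_2^{-1} \circ \varphi_1$ is an internal isomorphism of $\alg{L}$, which by hypothesis must be an identity, forcing $\alg{S}_1 = \alg{S}_2$ and making $\alg{R}$ the diagonal of this subalgebra.

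The main obstacle I anticipate is the subdirect-product analysis in the reverse direction, specifically invoking Fleischer's decomposition cleanly in a congruence-permutable variety and handling the degenerate cases where some $\alg{S}_i$ is trivial; everything else reduces to straightforward term manipulations matching the hypotheses to the earlier propositions.
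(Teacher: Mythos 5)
Your argument is correct, but note that the paper does not actually prove this proposition at all: it is recalled from Foster--Pixley (the cited Theorem 3.1 of \cite{FosterPixley1964b}) with no proof given, so there is no internal argument to compare against. Your route is a legitimate reconstruction from the two other cited characterizations: the forward direction reads off the internal-isomorphism condition from Proposition \ref{SP-char-discriminator}, gets arithmeticity from a Pixley term built out of the discriminator, and gets simplicity of subalgebras from the standard discriminator collapse $c = t(a,a,c)\,\theta\,t(a,b,c) = a$; the reverse direction verifies the Baker--Pixley criterion of Proposition \ref{SP-char-squaresubalg} using Fleischer's description of subdirect products in congruence-permutable varieties, with the simplicity hypothesis forcing each $\varphi_i$ to be an isomorphism or to have trivial codomain, and the internal-isomorphism hypothesis collapsing the graph case to a diagonal. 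Two small remarks: the ternary discriminator is itself already a Pixley term ($t(x,y,x)=x$, $t(x,x,y)=y$, $t(x,y,y)=x$), so the composite $t(t(x,y,z),y,x)$ is correct but unnecessary; and in the Fleischer case analysis you should be slightly careful about one-element subalgebras (e.g.\ $\{0\}\leq\mathbb{Z}/p\mathbb{Z}$), for which ``simple'' is a matter of convention --- the degenerate cases still land in the product-or-diagonal dichotomy, as your handling of the trivial-codomain case shows, but it is worth saying explicitly. Apart from these cosmetic points, the proof is sound and essentially the classical derivation one would extract from Pixley's and Fleischer's results.
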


\begin{remark}
Together with Proposition \ref{prop:homsareprojections} this implies that if $\alg{L}$ is semi-primal, then the collection of subalgebras $\mathbb{S}(\alg{L})$ considered as a \emph{subcategory} of the variety generated by $\alg{L}$, forms a lattice, ordered under inclusion. \hfill $\blacksquare$   
\end{remark} 
 
The finite members of $\var{A}$ are particularly well-behaved. For notation, given a concrete category $\cate{C}$, we use $\cate{C}^\omega$ to denote the full subcategory of $\cate{C}$ generated by its finite members. In particular, if $\var{A}$ is a variety, we use $\var{A}^\omega$ to denote the category of finite algebras in $\var{A}$.

\begin{proposition}{\cite[Theorem 7.1]{FosterPixley1964a}}\label{Afin = PS(L)}
Let $\var{A}$ be the variety generated by a semi-primal algebra $\alg{L}$. Every finite algebra $\alg{A}\in\var{A}^\omega$ is isomorphic to a direct product of subalgebras of $\alg{L}$. 
\end{proposition}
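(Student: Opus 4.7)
The plan is to establish a subdirect decomposition of $\alg{A}$ into subalgebras of $\alg{L}$, and then upgrade this to a direct product via the Chinese Remainder Theorem available in arithmetical varieties. Essentially, this is a standard application of Jónsson's Lemma combined with congruence permutability, using the structural facts recorded in Proposition \ref{SP-char-simplearithmetic}.

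First, since $\var{A}$ is congruence distributive, Jónsson's Lemma applies to $\var{A} = \variety{\alg{L}}$. As $\alg{L}$ is finite, any ultraproduct of copies of $\alg{L}$ is isomorphic to $\alg{L}$ itself, so every subdirectly irreducible in $\var{A}$ lies in $\mathbb{H}\mathbb{S}(\alg{L})$. Because every subalgebra of $\alg{L}$ is simple, a nontrivial homomorphic image of such a subalgebra is isomorphic to the subalgebra itself. Hence the nontrivial subdirectly irreducibles of $\var{A}$ are, up to isomorphism, exactly the (nontrivial) subalgebras of $\alg{L}$.

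Since $\alg{A}$ is finite, its congruence lattice is finite, and the standard subdirect decomposition gives finitely many meet-irreducible congruences $\theta_1,\ldots,\theta_n$ with $\bigcap_i \theta_i = \Delta$ and each $\alg{A}/\theta_i$ subdirectly irreducible. By the previous step, each $\alg{A}/\theta_i$ is isomorphic to a subalgebra $\alg{S}_i \leq \alg{L}$, in particular simple, so each $\theta_i$ is in fact a maximal congruence. After removing duplicates we may assume the $\theta_i$ are pairwise distinct; maximality then forces pairwise comaximality, i.e. $\theta_i \vee \theta_j = \nabla$ for $i \neq j$.

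The final (and main) step is the conversion from subdirect to direct. Since $\var{A}$ is congruence permutable, the Chinese Remainder Theorem for universal algebras applies to the pairwise comaximal family $\{\theta_i\}$ with trivial intersection, yielding that the canonical map $\alg{A} \to \prod_{i=1}^n \alg{A}/\theta_i$ is surjective; combined with injectivity (from $\bigcap_i \theta_i = \Delta$) this is an isomorphism $\alg{A} \cong \prod_{i=1}^n \alg{S}_i$. The main obstacle is precisely this CRT step, which is where arithmeticity plays its essential role: congruence distributivity powers the Jónsson/simplicity reduction, while congruence permutability is what lets us collapse the subdirect embedding into a genuine direct product. Once the comaximality of the $\theta_i$ has been extracted from the simplicity of subalgebras, the rest is purely formal.
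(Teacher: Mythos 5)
The paper gives no proof of this proposition (it simply cites Foster--Pixley), so your argument stands on its own, and the route you take -- J\'onsson's Lemma plus simplicity of subalgebras to pin down the subdirectly irreducibles, a finite subdirect decomposition through maximal congruences, then a Chinese-Remainder step to pass from subdirect to direct -- is a sound and standard one that does reach the stated conclusion. The flaw is in how you justify the decisive step: you assert that congruence permutability alone is ``what lets us collapse the subdirect embedding into a genuine direct product,'' and you invoke a CRT for pairwise comaximal congruences on that basis (``Since $\var{A}$ is congruence permutable, the Chinese Remainder Theorem \dots applies''). That version of the CRT is false in a merely congruence-permutable variety once there are at least three congruences: in the abelian group $\Z/p\Z \times \Z/p\Z$ the subgroups $\Z/p\Z \times 0$, $0 \times \Z/p\Z$ and the diagonal yield three distinct maximal, hence pairwise comaximal, congruences with trivial intersection, each with simple quotient, yet the canonical map into the product of the three quotients is injective but not surjective. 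So ``pairwise comaximal, permutable, trivial intersection'' does not by itself give surjectivity; distributivity of the congruence lattice is needed in this step as well, not only in the J\'onsson reduction as your closing sentence suggests.

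The repair is one line, since distributivity is available: for each $i$, congruence distributivity gives $\theta_i \vee \bigcap_{j\neq i}\theta_j = \bigcap_{j\neq i}(\theta_i \vee \theta_j) = \nabla$, and permutability then converts this join into the relational product $\theta_i \circ \bigcap_{j\neq i}\theta_j = \nabla$; an easy induction on $n$ now yields surjectivity of the canonical map $\alg{A} \to \prod_i \alg{A}/\theta_i$, and injectivity follows from $\bigcap_i \theta_i = \Delta$. Equivalently, quote the CRT in the form in which it is actually valid, namely for arithmetical algebras (this is Pixley's characterization), rather than for congruence-permutable ones. With that emendation your proof is complete; a very minor remaining point is that the one-element algebra should be read as the empty product of subalgebras.
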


We add yet another characterization of semi-primality in our particular case of interest (in which the algebra is based on a bounded lattice) in the following subsection (see Proposition \ref{SP-char-Ts}).

\subsection{Semi-primal bounded lattice expansions}\label{semi-primal-lattice-expansions}
In this subsection we set the scene for the remainder of this paper. We aim to describe the relationship between the variety $\BA$ of Boolean algebras and the variety generated by a semi-primal algebra \emph{with underlying bounded lattice}. 

Under the additional assumption that $\alg{L}$ is based on a bounded lattice, there is another nice characterization of semi-primality of $\alg{L}$ which will be particularly useful for our purposes. It relies on the following unary terms.   

\begin{definition}\label{defin:Ts}
Let $\alg{L}$ be an algebra based on a bounded lattice 
$ \alg{L}^{\flat} = (L,\wedge,\vee,0,1).$ For all $\ell \in L$ we define $T_\ell\colon L\rightarrow L$ and $\tau_{\ell}\colon L\to L$ to be the characteristic function of $\{ \ell \}$ and $\{\ell' \geq \ell \}$, respectively. That is,
$$ T_\ell (x) =  \begin{cases}
1 & \text{ if } x = \ell \\
0 & \text{ if } x \neq \ell
\end{cases}
\hspace{5mm}\text{ and }\hspace{5mm} 
\tau_\ell (x) =  \begin{cases}
1 & \text{ if } x \geq \ell \\
0 & \text{ if } x \not\geq \ell.
\end{cases}$$
\end{definition}

Even though the following result is essentially an instance of the more general \cite[Theorem 4.1]{Foster1967}, we include an easy direct proof here.

\begin{proposition}{\cite[Theorem 4.1]{Foster1967}}\label{SP-char-Ts}
Let $\alg{L}$ be a finite algebra with an underlying bounded lattice. Then the following conditions are equivalent:
\begin{enumerate}
\item $\alg{L}$ is semi-primal.
\item For every $\ell\in \alg{L}$, the function $T_\ell$ is term-definable in $\alg{L}$.
\item $T_0$ is term-definable and for every $\ell\in \alg{L}$, the function $\tau_\ell$ is term-definable in $\alg{L}$.
\end{enumerate}
\end{proposition}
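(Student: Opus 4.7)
The plan is to prove the equivalences in the cycle $(1)\Rightarrow(2)\Rightarrow(3)\Rightarrow(2)\Rightarrow(1)$, or more economically $(1)\Leftrightarrow(2)$ together with $(2)\Leftrightarrow(3)$.

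For $(1)\Rightarrow(2)$, I would argue that each $T_\ell$ preserves subalgebras and then invoke semi-primality. Since the signature contains the bounded-lattice constants $0$ and $1$, these belong to every subalgebra of $\alg{L}$; in particular, for any $a\in L$ one has $\{0,1\}\subseteq \langle a\rangle$, so $T_\ell(a)\in\{0,1\}\subseteq\langle a\rangle$. Semi-primality then yields a term.

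For $(2)\Rightarrow(3)$, $T_0$ is of course term-definable by hypothesis, and, using finiteness of $L$, one simply writes $\tau_\ell(x)=\bigvee_{m\geq\ell}T_m(x)$, a finite lattice-join of terms. For the converse $(3)\Rightarrow(2)$, the idea is to use $T_0$ to take the Boolean complement of a $\{0,1\}$-valued term: set $\sigma_\ell(x):=T_0(\tau_\ell(x))$, which equals $1$ exactly when $x\not\geq\ell$. Since $\alg{L}^\flat$ is finite, the relation $x=\ell$ is equivalent to $x\geq\ell$ together with $x\not\geq m$ for every cover $m$ of $\ell$ in $\alg{L}^\flat$. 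This gives the explicit term
\[
T_\ell(x)=\tau_\ell(x)\wedge\bigwedge_{m\,\text{covers}\,\ell}\sigma_m(x),
\]
with the understanding that the empty meet (when $\ell=1$) is $1$.

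The substantial step is $(2)\Rightarrow(1)$, for which the plan is to apply Proposition \ref{SP-char-discriminator}. First, the $T_\ell$'s let us detect equality via $\mathit{eq}(x,y):=\bigvee_{\ell\in L}(T_\ell(x)\wedge T_\ell(y))$, which takes value $1$ iff $x=y$ and $0$ otherwise. Applying $T_0$ once more produces the Boolean complement of $\mathit{eq}$, and then the bounded-lattice structure allows me to assemble the ternary discriminator as
\[
t(x,y,z)=\bigl(\mathit{eq}(x,y)\wedge z\bigr)\vee\bigl(T_0(\mathit{eq}(x,y))\wedge x\bigr),
\]
using the identities $0\wedge a=0$, $1\wedge a=a$, $0\vee a=a$ in the bounded lattice reduct. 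Hence $\alg{L}$ is quasi-primal. To finish, I need to rule out non-trivial internal isomorphisms: if $\varphi\colon\alg{S}_1\to\alg{S}_2$ is an internal isomorphism, then $\varphi$ preserves the term $T_\ell$ and fixes $0,1$, so for every $x\in S_1$ and every $\ell$, $T_\ell(x)=T_\ell(\varphi(x))$; taking $\ell=x$ gives $\varphi(x)=x$, so $\varphi=\mathrm{id}_{S_1}$ and $S_1=S_2$. Proposition \ref{SP-char-discriminator} then yields semi-primality.

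The only step requiring care is $(2)\Rightarrow(1)$, specifically verifying that all the combinators used (the big join defining $\mathit{eq}$, the outer Boolean combination defining $t$) genuinely produce terms in the given signature; this is immediate because the bounded-lattice operations are part of the signature and the $T_\ell$'s are terms by hypothesis. The rest is bookkeeping in the finite lattice $\alg{L}^\flat$.
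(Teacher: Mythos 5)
Your proposal is correct and follows essentially the same route as the paper: the same subalgebra-preservation argument for $(1)\Rightarrow(2)$, the same equality-detecting join and discriminator term plus the $T_\ell$-based elimination of non-trivial internal isomorphisms for $(2)\Rightarrow(1)$ via Proposition \ref{SP-char-discriminator}, and the same join/meet constructions relating $T_\ell$ and $\tau_\ell$ (your use of covers of $\ell$ instead of all $\ell'>\ell$ in $(3)\Rightarrow(2)$ is only a minor economy).
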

\begin{proof}
$(1)\Rightarrow (2)$: Since every subalgebra of $\alg{L}$ contains the set $\{0,1\}$, semi-primality of $\alg{L}$ implies that all $T_\ell$ are term-definable, since they preserve subalgebras. 

$(2)\Rightarrow (1)$: First we show that the ternary discriminator is term-definable in $\alg{L}$. Consider the term
$$ c(x,y) = \bigvee_{\ell\in L} \big((T_\ell(x) \wedge T_\ell(y)\big) ,$$
which satisfies 
$$ c(x,y) =  \begin{cases}
1 & \text{ if } x = y \\
0 & \text{ if } x \neq y
\end{cases}$$
and $d(x,y) := T_0(c(x,y))$ (note that this is the discrete metric). 
The term 
$$ t(x,y,z) = (d(x,y) \wedge x) \vee (c(x,y) \wedge z)$$
yields the ternary discriminator on $\alg{L}$. 
Now we show that the only internal isomorphisms of $\alg{L}$ are the identities of subalgebras. Let $\varphi: \alg{S}_1 \rightarrow \alg{S}_2$ be an internal isomorphism of $\alg{L}$ and let $s\in S_1$ be arbitrary. Then 
$$ 1 = T_{\varphi(s)}\big(\varphi(s)\big) = \varphi \big(T_{\varphi(s)}(s)\big) $$
Since $\varphi(0) = 0$ we necessarily have $T_{\varphi(s)}(s) = 1$, which is equivalent to $\varphi(s) = s$. Altogether, due to Proposition \ref{SP-char-discriminator}, we showed that $\alg{L}$ is semi-primal.  

$(2)\Rightarrow (3)$: If the $T_\ell$ are term-definable we can define 
$$\tau_\ell (x) = \bigvee_{\ell' \geq \ell} T_{\ell'} (x).$$

$(3) \Rightarrow (2)$: If $T_0$ and the $\tau_\ell$ are term-definable we can define 
$$ T_\ell(x) = \tau_{\ell} (x) \wedge \bigwedge_{\ell' > \ell} T_0\big(\tau_{\ell'} (x)\big),$$ 
which concludes the proof.
\end{proof}
\begin{remark}
In light of this result, we can turn any finite bounded lattice into a semi-primal algebra by adding $T_\ell$ as unary operation for every element $\ell \in L$. One might wonder how this differs from adding a constant symbol (i.e., a nullary operation) for every element. The difference is that adding a constant imposes the requirement that every subalgebra needs to contain the element corresponding to this constant. Thus, the algebra that results after adding all constants does not have any proper subalgebras. \hfill $\blacksquare$
\end{remark} 
We now state our main assumption, which from now on holds for the remainder of this paper.    

\begin{assumption}\label{assumption}
\textbf{The finite algebra $\alg{L}$ is semi-primal and has an underlying bounded lattice.} 
\end{assumption}
From now on, let $\var{A} := \variety{\alg{L}}$ denote  the variety generated by $\alg{L}$.
In Subsection \ref{subs:semi-primal-examples} we provide various examples of algebras satisfying Assumption \ref{assumption}.   

As noted in \cite{Maruyama2012} (where the same assumption on $\alg{L}$ is made), from the point of view of many-valued logic, semi-primal algebras make good candidates for algebras of truth-values. In this context the underlying bounded lattice is a natural minimal requirement. 
\subsection{Examples of semi-primal algebras}\label{subs:semi-primal-examples} In this subsection we collect some examples of semi-primal algebras. All of them are bounded lattice expansions (since most of them stem from many-valued logic), thus they all fit the scope of this paper (see Assumption \ref{assumption}). For other examples we refer the reader to \cite{Burris1992, Werner1978,MooreYaqub1968}.  

First, we describe several different semi-primal algebras based on finite chains. To get examples based on lattices which are not necessarily totally ordered, in Subsection~\ref{example:reslattices} (and Appendix \ref{Appendix}) we discuss semi-primal residuated lattices. In particular we describe a systematic way to identify them among the $\FLew$-algebras. Similarly, Subsection~\ref{Example: Pseudo-Logics} illustrates how to identify semi-primal algebras which need not be totally ordered among the pseudo-logics. At the end of this subsection we recall Murskiĭ's Theorem which states that, in some sense, almost all finite lattice-based algebras are semi-primal.       

\subsubsection{Chain-based algebras}\label{example:chain-based} We will describe several different ways of turning the $(n+1)$-element chain $\{0, \tfrac{1}{n}, \dots, \tfrac{n-1}{n},1\}$ with its usual lattice-order into a semi-primal algebra. We present the examples ordered decreasingly by the amount of subalgebras. 

First, turning a chain into a semi-primal algebra without any further impositions may be achieved as follows.
\begin{example}
The \emph{$n$-th general semi-primal chain} is given by 
$$\alg{T}_n = \big(\{0, \tfrac{1}{n}, \dots, \tfrac{n-1}{n},1\}, \wedge, \vee, 0, 1, (T_{\frac{i}{n}})_{i=0}^n\big),$$
where the unary operations $T_{\frac{i}{n}}$ are the ones from Definition \ref{defin:Ts}. For all $n\geq 1$ the algebra $\alg{T}_n$ is semi-primal (this immediately follows from Proposition \ref{SP-char-Ts}). Every subset of $T_n$ which contains the set $\{0,1\}$ defines a subalgebra of $\alg{T}_n$.
\end{example} 

Next we find examples among the \emph{\L ukasiewicz-Moisil algebras}, which were originally intended to give algebraic semantics for \L ukasiewicz finitely-valued logic. It turns out, however, that they encompass a bit more than that (see \cite{Cignoli1982}). The logic corresponding to these algebras is nowadays named after Moisil.
\begin{example}
The \emph{$n$-th \L ukasiewicz-Moisil chain} is given by 
$$\alg{M}_n = \big(\{0, \tfrac{1}{n}, \dots, \tfrac{n-1}{n},1\}, \wedge, \vee, \neg, 0, 1, (\tau_{\frac{i}{n}})_{i=1}^n\big),$$
where $\neg x = 1-x$ and the unary operations $\tau_{\frac{i}{n}}$ are the ones from Definition \ref{defin:Ts}.
For all $n\geq 1$, the algebra $\alg{M}_n$ is semi-primal. This follows from characterization (3) of Proposition \ref{SP-char-Ts} - we only have to check that $T_0$ is term-definable. To see this note that we can define $T_1(x) = \tau_1(x)$ and $T_0(x) = T_1(\neg x)$. 
\end{example}
We proceed with a classical example from many-valued logic among the finite \emph{MV-algebras} introduced by Chang (see \cite{Chang1958,Chang1959}). They give rise to the algebraic counterpart of \L ukasiewicz finite-valued logic. 
\begin{example}
The \emph{$n$-th Łukasiewicz chain} is given by 
$$\lucas_n = \big(\{0, \tfrac{1}{n}, \dots, \tfrac{n-1}{n},1\},\wedge, \vee, \oplus,\odot,\neg, 0, 1\big),$$
where $x\oplus y = \text{min}(x+y,1)$, $x\odot y = \text{max}(x+y-1, 0)$ and $\neg x = 1 - x$. 
For all $n\geq 1$, the algebra $\lucas_n$ is semi-primal. The proof of this fact can be found in \cite[Proposition 2.1]{Niederkorn2001}.
The subalgebras of $\lucas_n$ correspond to the divisors $d$ of $n$ and are of the form 
$$\lucas_d = \{ 0, \tfrac{k}{n}, \dots, \tfrac{(d-1)k}{n}, 1 \} \text{ where } n = kd.$$
\end{example} 

Other semi-primal chains are found among the \emph{Cornish algebras}, which generalize Ockham algebras (see \cite{Cornish1986, DaveyGair2017}).   

\begin{example} The \emph{$n$-th semi-primal Cornish chain} is given by
$$ \alg{CO}_n = \big(\{ 0, \tfrac{1}{n},\dots, \tfrac{n-1}{n},1\},\wedge,\vee, \neg, f, 0, 1\big), $$
where $\neg x = 1-x$, $f(0) = 0, f(1) = 1$ and $f(\frac{i}{n}) = \frac{i+1}{n}$ for $1\leq i \leq n-1$. For all $n\geq 1$, the algebra $\alg{CO}_n$ is semi-primal. The proof of this fact can be found in \cite[Example 5.15]{DaveyGair2017}. The only proper subalgebra of $\alg{CO}_n$ is $\{ 0,1 \}$.  
\end{example} 

Finally, among the \emph{Post-algebras} we find the well-known examples of chain-based algebras which are not only semi-primal, but even primal.   

\begin{example}
The \emph{$n$-th Post chain} is given by 
$$ \alg{P}_n = \big(\{ 0, \tfrac{1}{n},\dots, \tfrac{n-1}{n},1\}, \wedge, \vee, ', 0, 1\big)$$
where $1' = 0$ and $(\frac{i}{n})' = (\frac{i+1}{n})$ for $0 \leq i < n$. For all $n\geq 1$, the algebra $\alg{P}_n$ is primal (see, \emph{e.g.}, \cite[Theorem 35]{Foster1953a})
\end{example}

\subsubsection{Residuated Lattices}\label{example:reslattices}

For a general survey of residuated lattices we refer the reader to \cite{GalatosJipsen2007, JipsenKowalski2002}. We only consider bounded commutative residuated lattices here, with a particular focus on \emph{$\FLew$-algebras}. 

\begin{definition}
A \emph{(bounded commutative) residuated lattice} is an algebra $$\alg{R} = (R, \wedge, \vee, 0, 1, \odot, e, \rightarrow )$$ such that
$(R, \wedge, \vee, 0, 1)$ is a bounded lattice,
$(R, \odot, e)$ is a commutative monoid and 
the binary operation $\rightarrow$ satisfies the residuation condition 
$$x\odot y \leq z \Leftrightarrow x \leq y\rightarrow z.$$ 
We call $\alg{R}$ a \emph{$\FLew$-algebra} if, in addition, it satisfies $e = 1$.    
\end{definition}
 
Our main tool to identify semi-primal $\FLew$-algebras is \cite[Theorem 3.10]{Kowalski2004}, which implies that a $\FLew$-algebra $\alg{R}$ is quasi-primal if and only if there is some $n\geq 1$ such that
\begin{equation}\label{eq:QPFLEW}
x \vee \neg(x^n) = 1 \text{ for all } x\in R,
\end{equation}
where, as usual, we define $\neg x$ as $x\rightarrow 0$ (and $x^n$ refers to the $n$-th power with respect to $\odot$). For our purposes this theorem has the following practical consequence. 

\begin{corollary}\label{cor:QPFLEW}
Let $\alg{R}$ be a finite $\FLew$-algebra. If $\alg{R}$ does not contain any idempotent elements (that is, elements with $x \odot x = x$) other than $0$ and $1$, then $\alg{R}$ is quasi-primal. If $\alg{R}$ is based on a chain, the converse also holds.     
\end{corollary}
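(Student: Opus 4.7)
The plan is to lean on the cited criterion from Kowalski, namely that a finite $\FLew$-algebra $\alg{R}$ is quasi-primal if and only if there exists $n\geq 1$ with $x\vee \neg(x^n)=1$ for all $x\in R$. Both implications of the corollary then reduce to understanding the idempotent powers of elements.

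For the forward direction, suppose the only idempotents of $\alg{R}$ are $0$ and $1$. Since $(R,\odot,1)$ is a finite commutative monoid, a pigeonhole argument applied to the sequence $x,x^2,x^3,\ldots$ shows that for each $x\in R$ there exist indices $k_x,p_x \leq |R|$ with $x^{k_x}=x^{k_x+p_x}$, and consequently $x^N$ is idempotent for every $N\geq k_x$ divisible by $p_x$. Taking $n=|R|!$ makes this work uniformly for all $x\in R$, so $x^n$ is idempotent and by assumption $x^n\in\{0,1\}$. In any $\FLew$-algebra, monotonicity of $\odot$ together with $x\leq 1$ yields $x^n\leq x$, so $x^n=1$ forces $x=1$. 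Now verify the identity case by case: if $x^n=0$ then $\neg(x^n)=1$ and $x\vee\neg(x^n)=1$; if $x^n=1$ then $x=1$ and the join is again $1$. The Kowalski condition is satisfied and $\alg{R}$ is quasi-primal.

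For the converse, assume $\alg{R}$ is a chain and quasi-primal, and fix $n$ as in the Kowalski identity. Let $e\in R$ be any idempotent, so $e^n=e$, and evaluate the identity at $x=e$ to get $e\vee\neg e=1$. Since the order is total, either $e\vee\neg e=e$, forcing $e=1$, or $e\vee\neg e=\neg e$, forcing $\neg e=1$. In the second case, residuation gives $\neg e = e\rightarrow 0 = 1 \iff 1\odot e\leq 0 \iff e=0$. Hence the only idempotents are $0$ and $1$.

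The main subtlety of the argument is the uniformity of the exponent $n$ in the forward direction: the Kowalski criterion requires a single $n$ working simultaneously for all $x$, so one really needs the quantitative pigeonhole step rather than merely the existence of idempotent powers element-by-element. Once that is set up, the proof is just the observations that $x^n\leq x$ in any $\FLew$-algebra and that a join in a chain collapses to one of its arguments.
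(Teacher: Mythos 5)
Your proof is correct and takes essentially the same approach as the paper: both directions reduce to the cited Kowalski criterion $x \vee \neg(x^n) = 1$, with finiteness providing a uniform exponent in the forward direction and the totality of the order collapsing the join in the converse. The only cosmetic differences are that the paper produces the exponent by iterating $a^2 < a$ for non-idempotent $a$ until $a^{2^k}=0$ (instead of your pigeonhole argument with $n=|R|!$), and it argues the converse contrapositively, showing $\neg a < a$ for a non-trivial idempotent $a$ so that $a \vee \neg(a^n) = a \neq 1$.
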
 

\begin{proof}
Let $\alg{R}$ be a finite $\FLew$-algebra with no other idempotent elements than $0$ and $1$. Recall that, for any $a\in R$, we have 
$ \neg a = a \rightarrow 0 = \bigvee \{ b\in R \mid a \odot b \leq 0 \}.$ 
Let $a\in R{\setminus}\{ 0,1 \}$. We show that there is some $n_{a}$ such that $a^{n_a} = 0$. Since $a$ is not idempotent we have $a^2 < a$. Either $a^2 = 0$ and we are done or $a^2$ is again not idempotent. In this case we have $a^4 < a^2$ and we repeat the argument. Since $\alg{R}$ is finite, continuing this process we eventually need to find $a^{2^k} = 0$. Now $\alg{R}$ satisfies equation (\ref{eq:QPFLEW}) for $n = \bigvee \{ n_a \mid a\in R{\setminus}\{ 0,1 \} \}$, since we always have 
$$a \vee \neg (a^n) = a \vee \neg 0 = a \vee 1 = 1.$$
Thus $\alg{R}$ is quasi-primal. 

Now suppose that $\alg{R}$ is based on a chain. If $a\in R{\setminus}\{ 0,1 \}$ is idempotent, then $\neg a < a$ since for all $b \geq a$ we have $a\odot b \geq a \odot a = a$. Therefore, for all $n\geq 1$ we have $a \vee \neg (a^n) = a\vee \neg a = a \neq 1$. Thus, $\alg{R}$ does not satisfy equation (\ref{eq:QPFLEW}) and is not quasi-primal.    
\end{proof} 

\begin{remark}
The second part of the argument really requires $\alg{R}$ to be based on a chain. For example, consider the $4$-element diamond lattice $0 \leq a, b \leq 1$ with $a\wedge b = 0$ and $a\vee b = 1$. We can define a $\FLew$-algebra based on this lattice by stipulating $a^2 = a$, $b^2 = b$ and $a\odot b = 0$. Even though $a$ and $b$ are idempotent, we have $a \vee \neg a = a \vee b = 1$ and $b \vee \neg b = b \vee a = 1$. Therefore, this algebra is quasi-primal (it is, however, not semi-primal, since it has the non-trivial automorphism swapping $a$ and $b$). \hfill $\blacksquare$   
\end{remark}

In \cite{GalatosJipsen2017} Galatos and Jipsen provide a list of all finite residuated lattices of size up to $6$. Corollary \ref{cor:QPFLEW} enables us to find quasi-primal $\FLew$-algebras among them and thus, using Proposition \ref{SP-char-discriminator}, we can identify the semi-primal ones by ruling out the existence of non-trivial internal isomorphisms.  For example, there is a total of six quasi-primal $\FLew$-chains with $5$ elements ($R_{1,17}^{5,1}, R_{1,18}^{5,1} \dots R^{5,1}_{1,22}$ in \cite{GalatosJipsen2017}), five of which are semi-primal (all except $R_{1,17}^{5,1}$). Examples of semi-primal $\FLew$-algebras not based on a chain are, \emph{e.g.}, $R^{6,2}_{1,11}$ and $R^{6,3}_{1,9}$ in \cite{GalatosJipsen2017}. The algebras in question are depicted in Appendix \ref{Appendix}, where we also provide detailed proofs of these claims.   

While until now we discussed how to identify semi-primal $\FLew$-algebras, we end this subsection with two examples of semi-primal algebras based on residuated lattices where $1 \neq e$.

Specifically, we consider the \emph{bounded De Morgan monoids} $\alg{C^{01}_4}$ and $\alg{D^{01}_4}$ depicted in Figure \ref{fig:DeMorganmonoid}. 

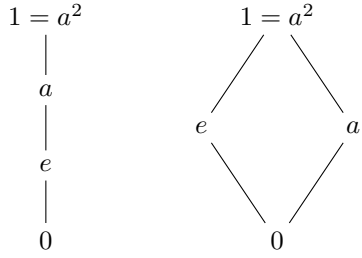
\begin{figure}[ht]
\begin{tikzpicture}
  \node (0) at (0,0) {$0$};
  \node (e) at (0,1) {$e$};
  \node (a) at (0,2) {$a$};
  \node (1) at (0,3) {$1 = a^2$};
  \draw (0) -- (e) -- (a) -- (1);
\end{tikzpicture}
\hspace{1cm}
\begin{tikzpicture}
  \node (0) at (0,0) {$0$};
  \node (1) at (0,3) {$1 = a^2$};
  \node (a) at (-1,1.5) {$e$};
  \node (b) at (1,1.5) {$a$};
  \draw (0) -- (a) -- (1) -- (b) -- (0);
\end{tikzpicture} 
 \caption{The (semi-)primal bounded De Morgan monoids $\alg{C^{01}_4}$ and $\alg{D^{01}_4}$.}
 \label{fig:DeMorganmonoid}
\end{figure}

They are bounded commutative residuated lattices with an additional involution ${\sim}$ which, in both examples, is defined by ${\sim} e = a$ and ${\sim} 0 = 1$. Our names for these algebras are inspired by \cite{Moraschini2019}, where $\alg{C_4}$ and $\alg{D_4}$ are used for the corresponding De Morgan monoids with the bounds $0$ and $1$ excluded from the signature (in \cite{Moraschini2019} it is shown that each of these two algebras generates a minimal subvariety of the variety of all De Morgan monoids).

\begin{proposition}
The algebras $\alg{C_4^{01}}$ and $\alg{D_4}^{01}$ are primal. Their reducts obtained by removing the neutral element $e$ from the signature, are semi-primal. 
\end{proposition}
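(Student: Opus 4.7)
The plan is to deduce semi-primality of all four algebras from Proposition \ref{SP-char-Ts} by exhibiting the characteristic functions $T_\ell$ (for $\ell \in \{0, e, a, 1\}$) as terms built from $\wedge, \vee, \odot, \rightarrow, \sim$ and the bounds $0, 1$, that is, \emph{without} using $e$ in the signature. Once this is done, the reducts are semi-primal directly; for the full algebras I observe that the carrier is exhausted by $0, 1, e$ and $\sim e = a$, so the whole algebra is its only subalgebra, and semi-primality together with the absence of proper subalgebras yields primality.

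The first step is to compute $\neg x := x \rightarrow 0$ in both algebras. A short residuation calculation gives $\neg 0 = 1$ and $\neg \ell = 0$ for $\ell \in \{e, a, 1\}$; the reason is that $\ell \odot y > 0$ whenever $\ell, y > 0$, which follows from $e$ being the unit and $a^2 = 1$. Hence $T_0(x) = \neg x$, and using the involution, $T_1(x) = \neg \sim x$ in both algebras.

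For $\alg{C^{01}_4}$, a chain $0 < e < a < 1$, I would invoke characterization $(3)$ of Proposition \ref{SP-char-Ts}: set $\tau_e(x) := \neg\neg x$, which is the characteristic function of the upper set $\{e, a, 1\}$, and $\tau_a(x) := \neg \sim (x \odot x)$, exploiting that $x \odot x$ takes values $0, e, 1, 1$ on $0, e, a, 1$ respectively. Then $T_a := \tau_a \wedge \neg T_1$ and $T_e := \tau_e \wedge \neg \tau_a$ complete the chain case.

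The genuinely new case is $\alg{D^{01}_4}$, where $e$ and $a$ are incomparable and swapped by $\sim$, so no term built from $\wedge, \vee, \sim$ and the bounds can separate them (the swap of $e$ and $a$ is an automorphism of that reduct); this is the main obstacle. The key is that $\odot$ is asymmetric on $e$ and $a$: $e \odot e = e$ while $a \odot a = 1$. I would first carve out the ``middle'' $\{e, a\}$ via $M(x) := \neg\neg x \wedge \neg\neg \sim x$, which evaluates to $0, 1, 1, 0$ on $0, e, a, 1$. Since $\neg \sim (x \odot x)$ is the characteristic function of $\{a, 1\}$, I set $T_a(x) := M(x) \wedge \neg \sim (x \odot x)$ and $T_e(x) := M(x) \wedge \neg(\neg \sim (x \odot x))$. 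With all $T_\ell$'s in hand, Proposition \ref{SP-char-Ts} delivers semi-primality of the four algebras, and the constant-term argument above upgrades $\alg{C^{01}_4}$ and $\alg{D^{01}_4}$ to primality.
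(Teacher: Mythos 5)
Your proposal is correct and takes essentially the same route as the paper: exhibit the characteristic functions $T_\ell$ (equivalently the $\tau_\ell$'s of Proposition \ref{SP-char-Ts}) by terms that avoid the constant $e$, conclude semi-primality of the reducts, and upgrade the full algebras to primal because their carrier is generated by the constants together with $\sim$, so they have no proper subalgebras. The only difference is the concrete choice of terms (you build everything from $\neg x = x\rightarrow 0$, $\sim$ and squaring, while the paper uses $\sim$, $\odot$ and the bounds), and your terms check out against both multiplication tables.
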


\begin{proof}
Starting with $\alg{C_4^{01}}$, we directly verify that it satisfies characterization (3) of Proposition \ref{SP-char-Ts}. First we define $T_1$ and, therefore, $T_0(x) = T_1({\sim} x)$. As in \cite{DaveySchumann1991}, we do this by, for all $\ell \in \{ 0, e, a \}$, defining unary terms $u_\ell$ satisfying $u_\ell(1) = 1$ and $u_\ell(\ell) = 0$. For instance, we can define
such terms by 
$$ u_{0}(x) = x\wedge 1, \hspace{2mm} u_e(x) = {\sim}\big(({\sim}x)^2\big) \text{ and } u_a(x) = {\sim}\big({({\sim} x) \odot 1}\big).$$  
Through these terms we can clearly define $T_1(x) = u_0(x) \wedge u_{e}(x) \wedge u_a(x).$ 
Lastly, we need to define $\tau_{\ell}$ for $\ell \in \{ e, a \}$. Again, it suffices to find terms $\tau^\ast_\ell$ which satisfy
$$ \tau^\ast_\ell(x) = \begin{cases}
1 & \text{ if } x \geq \ell \\
\neq 1 & \text{ if } x \not\geq \ell,
\end{cases}$$ 
since then we get $\tau_\ell = T_1(\tau_\ell^\ast)$. Our desired terms are given by 
$$ \tau^\ast_e(x) = \big(({\sim}x)^2 \odot x\big) \vee x^2 \text{ and } \tau^\ast_a(x) = x^2.$$  

This concludes the proof for $\alg{C_4^{01}}$. The proof for $\alg{D_4^{01}}$ is completely analogous, except that we use $\tau^\ast_{e}(x) = \big(({\sim}x)^2 \odot x\big) \vee x$ instead. Thus we showed that these two algebras are semi-primal, and since they don't have any proper subalgebras they are primal. Since we never relied on the constant $e$ in the above, the last part of the statement follows. Note that in both cases, if we exclude $e$ from the signature then $\{ 0,1 \}$ becomes a proper subalgebra.    
\end{proof}

\subsubsection{Pseudo-logics}\label{Example: Pseudo-Logics} We illustrate how to generate more examples of semi-primal algebras which are based on a bounded lattice which is not necessarily a chain. The results and terminology are due to \cite{ClarkDavey1998, DaveySchumann1991}. A \emph{pseudo-logic}  
$$\alg{L} = (L, \wedge, \vee, ', 0, 1)$$
is a bounded lattice with an additional unary operation $'$ which satisfies $0'= 1$ and $1' = 0$. In \cite{DaveySchumann1991} it is shown that every subalgebra of $\alg{L}^2$ which is not the graph of an internal isomorphism is a product of subalgebras if the following two properties are satisfied:
\begin{enumerate}
\item There is no $a\in L{\setminus}\{ 0 \}$ with $a' = 1$,
\item For all $a\in L$ there exists an $n\geq 1$ with $a \wedge a^{(2n)} = 0$ (where $a^{(k)}$ denotes the $k$-fold iteration of $'$ on $a$). 
\end{enumerate}   
Using this and the characterization of Proposition \ref{SP-char-squaresubalg}, we can find more examples of semi-primal algebras. Here, we only need to assure that the above mentioned conditions are satisfied and that there are no non-trivial internal isomorphisms. For example, the three algebras depicted in Figure \ref{fig:pseudo-logics} are semi-primal (the pseudo-negation $'$ is indicated by dotted arrows).

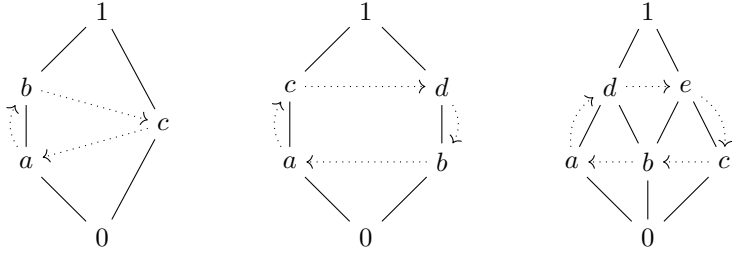
\begin{figure}[ht]
\begin{tikzpicture}
  \node (0) at (0,0) {$0$};
  \node (1) at (0,3) {$1$};
  \node (a) at (-1,1) {$a$};
  \node (b) at (-1,2) {$b$};
  \node (c) at (0.8,1.5) {$c$};
  \draw (0) -- (a) -- (b) -- (1) -- (c) -- (0);
  \path[->,font=\scriptsize]
(b) edge[dotted]   (c)
(c) edge[dotted] (a)
(a) edge[dotted, bend left] (b);
\end{tikzpicture} 
\hspace{1cm}
\begin{tikzpicture}
  \node (0) at (0,0) {$0$};
  \node (1) at (0,3) {$1$};
  \node (a) at (-1,1) {$a$};
  \node (b) at (1,1) {$b$};
  \node (c) at (-1,2) {$c$};
  \node (d) at (1,2) {$d$};
  \draw (0) -- (a) -- (c) -- (1) -- (d) -- (b) -- (0);
  \path[->,font=\scriptsize]
  (a) edge[dotted, bend left]   (c)
  (b) edge[dotted]   (a)
  (c) edge[dotted]   (d)
  (d) edge[dotted, bend left]   (b);
\end{tikzpicture}
\hspace{1cm}
\begin{tikzpicture}
  \node (0) at (0,0) {$0$};
  \node (1) at (0,3) {$1$};
  \node (a) at (-1,1) {$a$};
  \node (b) at (0,1) {$b$};
  \node (c) at (1,1) {$c$};
  \node (d) at (-0.5,2) {$d$};
  \node (e) at (0.5,2) {$e$};
  \draw (0) -- (a) -- (d) -- (1) -- (e) -- (c) -- (0);
  \draw (0) -- (b) -- (d);
  \draw (b) -- (e);
  \path[->,font=\scriptsize]
  (a) edge[dotted,bend left]   (d)
  (b) edge[dotted]   (a)
  (c) edge[dotted]   (b)
  (d) edge[dotted]   (e)
  (e) edge[dotted, bend left]   (c)  
  ;
\end{tikzpicture}
 \caption{Some semi-primal pseudo-logics (\cite{DaveySchumann1991, ClarkDavey1998}).}
 \label{fig:pseudo-logics}
\end{figure}

\subsubsection{Murskiĭ's Theorem}
While semi-primal algebras may seem rare, quite the opposite is suggested by the following. In 1975, Murskiĭ proved his surprising theorem about the proportion of semi-primal algebras of a fixed signature under increasing order. The original paper \cite{Murskii1975} is in Russian, the version we recall here is due to \cite[Section 6.2]{Bergman2011}.

\begin{theorem}{\cite{Murskii1975}}
Let $\sigma$ be an algebraic type which contains at least one operation symbol which is at least binary. Let $A_{\sigma, n}$ be the number of algebras of type $\sigma$ and size $n$ and let $SP_{\sigma, n}$ be the number of such algebras which are semi-primal. Then 
$$ \lim_{n\to\infty} \frac{SP_{\sigma,n}}{A_{\sigma,n}} = 1.$$   
\end{theorem}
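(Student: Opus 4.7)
The plan is a probabilistic counting argument on random algebras. For each $n$, identify the carrier with $\{1,\ldots,n\}$ and put the uniform distribution on algebras of type $\sigma$ and size $n$, so that each operation symbol is interpreted as a uniformly random function of the appropriate arity. The aim is to show $SP_{\sigma,n}/A_{\sigma,n} \to 1$. As a first step I would reduce to the single-binary-operation case: it suffices to show that for a uniformly random binary operation $\cdot$ on $\{1,\ldots,n\}$, the algebra $(\{1,\ldots,n\}, \cdot)$ is quasi-primal and has only identities of subalgebras as internal isomorphisms, with probability tending to $1$. Both properties transfer upward to any enrichment $(\{1,\ldots,n\}, \cdot, g_1, \ldots)$: quasi-primality transfers because the discriminator term from $\cdot$ still works in the larger signature, and rigidity transfers because adding operations can only shrink the lattice of subalgebras (and hence the class of internal isomorphisms between them); so by Proposition \ref{SP-char-discriminator} the enriched algebra is semi-primal.

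In the reduced setting we have the convenient count $n^{n^2}$ of binary operations on $\{1,\ldots,n\}$. Rigidity is the easier of the two conditions and I would handle it first by a union bound: for each candidate internal isomorphism $\varphi\colon S_1 \to S_2$ that is not the identity on $S_1 \cap S_2$, the joint event that $S_1, S_2$ are closed under $\cdot$ and that $\varphi$ is a homomorphism imposes enough independent constraints on $\cdot$ that the joint probability shrinks strictly faster than the combinatorial count of such triples $(S_1, S_2, \varphi)$ grows. Summing over the possible sizes of $S_1, S_2$ yields that the probability of failure of rigidity is $o(1)$.

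The main obstacle is quasi-primality: showing that with probability tending to $1$ the term clone of a random $\cdot$ contains the ternary discriminator. The strategy, following Murskiĭ, is to exhibit a finite list of elementary ``test'' operations (projection-like unary functions, certain constants, and a suitable comparator) whose simultaneous term-definability from $\cdot$ already yields the discriminator via suitable term composition. One then bounds, for each test function separately, the probability that $\cdot$ fails to term-define it by showing that such a failure restricts $\cdot$ to a highly constrained family of maps whose density among all $n^{n^2}$ possible binary operations tends to $0$. The delicate part is the precise combinatorial estimate quantifying how much each failure constrains $\cdot$; this is the heart of Murskiĭ's argument and the step where essentially all of the technical work is concentrated.
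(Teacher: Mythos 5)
First, note that the paper itself does not prove this statement: it is recalled from \cite{Murskii1975} (in the form given in \cite[Section 6.2]{Bergman2011}) purely as background, so there is no in-paper argument to compare against. Judged on its own terms, your proposal is an outline rather than a proof: the entire analytic core -- showing that a uniformly random binary operation term-defines the ternary discriminator with probability tending to $1$ -- is deferred with the remark that this is ``the heart of Murskiĭ's argument.'' That is precisely the part that needs to be carried out; listing a family of test operations and asserting that failure to define each one ``restricts $\cdot$ to a highly constrained family'' is a restatement of the goal, not an estimate.

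There is also a concrete error in the step you describe as the easier one. In the pure groupoid case (exactly one binary symbol, which the theorem covers), the number of idempotents of a uniformly random operation is asymptotically Poisson with mean $1$, so with probability tending to $1 - 2/e \approx 0.26$ there are at least two one-element subalgebras $\{a\}$ and $\{b\}$; the bijection $a \mapsto b$ is then automatically a non-identity internal isomorphism. Hence ``rigidity'' in the sense you use it -- no internal isomorphisms other than identities of subalgebras -- fails with probability bounded away from $0$, and no union bound can show it holds asymptotically almost surely. Worse, feeding this into Proposition \ref{SP-char-discriminator} as literally stated would force you to conclude that a non-vanishing proportion of algebras is \emph{not} semi-primal, contradicting the theorem. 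The fix is to treat one-element subalgebras separately: show that with high probability the only proper subalgebras are singletons, there are no non-trivial automorphisms, and there are no isomorphisms between subalgebras of size at least two; then argue directly (from the definition of semi-primality, or from the Pixley--Baker description of the clone of a quasi-primal algebra as the operations preserving subalgebras and internal isomorphisms) that isomorphisms between one-element subalgebras are harmless, because any subalgebra-preserving operation fixes each idempotent and therefore preserves them automatically. Note that the imprecision is invisible in the present paper only because its $\alg{L}$ is a bounded-lattice expansion, where every subalgebra contains $\{0,1\}$ and singleton subalgebras cannot occur. Finally, a small point: your reduction to a binary operation should be stated via a term such as $f(x,y,y,\dots,y)$ when the at-least-binary symbol has arity greater than two (this is a term operation and is again uniformly distributed); a binary slice obtained by freezing coordinates at constants is not a term operation and cannot be used.
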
 
\section{Semi-primal duality}\label{sec:semi-primal duality}

One of the nice features of the variety of Boolean algebras $\BA$ is the famous \emph{Stone duality} \cite{Stone1936}. Categorically speaking, it asserts that there is a dual equivalence between $\BA$ and the category $\Stone$ of Stone spaces (that is, compact, Hausdorff and zero-dimensional topological spaces) with continuous maps:

\begin{equation*} 
  \xymatrix@C=50pt{ 
    { \Stone } \ar@/^/[rr]^\Pi & & {\ \BA }  \ar@/^/[ll]^\Sigma}  
\end{equation*} 
The functor $\Sigma$ assigns to a Boolean algebra $\alg{B}$ its collection of ultrafilters and the functor $\Pi$ assigns to a Stone space $X$ the Boolean algebra of its clopen subsets with the usual set-theoretical Boolean operations. Note that these functors can be defined on objects by 
$$ \Sigma(\alg{B}) = \BA(\alg{B},\alg{2}) \text{ and } \Pi(X) = \Stone(X,2),$$
where in the latter equation $2$ denotes the two-element discrete space. 

Stone duality has been extended to quasi-primal algebras by Keimel and Werner in \cite{KeimelWerner1974}. This duality fits the general framework of \emph{Natural Dualities}. For us, the Semi-primal Strong Duality Theorem \cite[Theorem 3.3.14]{ClarkDavey1998} is of high importance. However, we present it self-contained and in a way which particularly suits our purpose. Furthermore, we will use categorical constructions to provide a new proof of this duality. Such a proof has, to the best of our knowledge, not appeared in the literature yet. 

First we introduce the dual category of $\var{A}$ generated by a semi-primal algebra. In the following, we always consider $\mathbb{S}(\alg{L})$ as a complete lattice in its usual ordering.  

\begin{definition}\label{def:StoneL}
The category $\StoneL$ has objects $(X, \val{v})$ where $X \in \Stone$ and 
$$\val{v}\colon X \to \mathbb{S}(\alg{L})$$
assigns to every point $x\in X$ a subalgebra $\val{v}(x) \leq \alg{L}$, such that for every subalgebra $\alg{S} \leq \alg{L}$ the preimage $\val{v}^{-1}(\alg{S}{\downarrow})$ is closed.   
A morphism $m\colon (X,\val{v}) \to (Y, \val{w})$ in $\StoneL$ is a continuous map $X\to Y$ which, for all $x\in X$, satisfies 
$$ \val{w}(m(x)) \leq \val{v}(x).$$
\end{definition}  
\begin{remark}\label{dualcategoryX}
In the framework of natural dualities \cite{ClarkDavey1998}, the dual category of $\mathcal{A}$ is defined slightly differently, using Stone spaces with unary relations (i.e., subsets). Let $\var{X}$ be the category with objects $(X, \{ R_\alg{S} \mid \alg{S}\leq \alg{L}\})$, where $X \in \Stone$ and $R_\alg{S}$ is a closed subset of $X$ for each subalgebra $\alg{S} \leq \alg{L}$, satisfying
$R_\alg{L} = X$ and $R_{\alg{S_1}} \cap R_{\alg{S_2}} = R_{\alg{S}_1 \cap \alg{S_2}}$ for all $\alg{S_1},\alg{S_2} \leq \alg{L}$. 
A morphism $m\colon (X, \{ R_\alg{S}\mid \alg{S} \leq \alg{L} \}) \to (X', \{ R'_\alg{S}\mid \alg{S} \leq \alg{L} \})$ in $\var{X}$ is a continuous relation-preserving map $X\to X'$, i.e., it satisfies $x\in R_\alg{S} \Rightarrow m(x)\in R'_\alg{S}$ for all $x\in X$ and $\alg{S} \in \mathbb{S}(\alg{L})$. 

The categories $\var{X}$ and $\StoneL$ are isomorphic, as witnessed by the following mutually inverse functors $\phi$ and $\psi$. The functor $\phi : \var{X} \to \StoneL$ is given on objects by $(X, \{ R_{\alg{S}}\mid \alg{S}\leq \alg{L} \}) \mapsto (X, \val{v})$, where $$\val{v}(x) = \bigcap\{ \alg{S} \mid x\in R_{\alg{S}} \}.$$ 
The functor $\psi\colon \StoneL \to \var{X}$ is given on objects by $(X, \val{v})\mapsto (X,  \{ R_{\alg{S}} \mid \alg{S} \leq \alg{L}\})$ where 
$$ R_{\alg{S}} = \{ x\in X \mid \val{v}(x) \leq \alg{S} \}.$$
Both $\phi$ and $\psi$ map every morphism to itself.  \hfill $\blacksquare$  
\end{remark}
We now describe the two contravariant functors $\Sigma_\alg{L}$ and $\Pi_\alg{L}$ which give rise to the duality between $\var{A}$ and $\StoneL$:

\begin{equation*} 
  \xymatrix@C=50pt{ 
    \StoneL \ar@/^/[rr]^{\Pi_\alg{L}} & & {\ \var{A} }  \ar@/^/[ll]^{\Sigma_\alg{L}}}  
\end{equation*} 
On objects $\alg{A}\in \var{A}$, let the functor $\Sigma_\alg{L}$ be defined by 
$$ \Sigma_\alg{L}(\alg{A}) = \big(\var{A}(\alg{A},\alg{L}), \val{im}\big)$$
where $\val{im}$ assigns to a homomorphism $h\colon \alg{A}\to \alg{L}$ its image $\val{im}(h) = h(A) \in \mathbb{S}(\alg{L})$. A clopen subbasis for the topology on $\var{A}(\alg{A},\alg{L})$ is given by the collection of sets of the following form with $a\in A$ and $\ell\in L$: 
$$[a:\ell] = \{ h\in \var{A}(\alg{A},\alg{L}) \mid h(a) = \ell \}.$$
On morphisms $f \in \var{A}(\alg{A}_1,\alg{A}_2)$ the functor acts via composition
\begin{align*}
\Sigma_\alg{L}f\colon \var{A}(\alg{A}_2,\alg{L}) &\to \var{A}(\alg{A}_1,\alg{L}) \\
h &\mapsto h \circ f.
\end{align*}
Note that this is a morphism in $\StoneL$ since $\val{im}(h\circ f) \leq \val{im}(h)$.

Before we define the functor $\Pi_\alg{L}$, we describe the canonical way to consider $L$ as a member of $\StoneL$. Simply endow $L$ with the discrete topology and 
$$ \bm{\langle \cdot \rangle} \colon L \rightarrow \mathbb{S}(\alg{L})$$ 
assigning to an element $\ell \in L$ the subalgebra $\langle \ell \rangle \leq \alg{L}$ it generates. Now, as expected, we can define the functor $\Pi_\alg{L}$ on objects $(X, \val{v}) \in \StoneL$ by
$$ \Pi_\alg{L}(X,\val{v}) = \StoneL\big((X,\val{v}), (L,\bm{\langle\cdot\rangle})\big)$$
with the algebraic operations defined pointwise. 
This means that the carrier-set of $\Pi_\alg{L}(X,\val{v})$ is the set of continuous maps $g\colon X\to L$ which respect $\val{v}$ in the sense that, for all $x\in X$, they satisfy  
$$ g(x) \in \val{v}(x).$$
Again, on morphisms $m\colon (X,\val{v})\to(Y,\val{w})$ the functor is defined via composition
\begin{align*}
\Pi_\alg{L}m\colon \StoneL\big((Y,\val{w}), (L,\bm{\langle\cdot\rangle})\big) &\to \StoneL\big((X,\val{v}), (L,\bm{\langle\cdot\rangle})\big) \\
g &\mapsto g \circ m.
\end{align*} 
This is well-defined due to the condition on morphisms in $\StoneL$: 
$$(g\circ m)(x) = g(m(x)) \in \val{w}(m(x)) \subseteq \val{v}(x).$$ 
It is also clearly a homomorphism since the operations are defined pointwise.  
 
\begin{theorem}{\cite{KeimelWerner1974, ClarkDavey1998}}\label{SPDuality}
The functors $\Pi_\alg{L}$ and $\Sigma_\alg{L}$ are fully faithful and establish a dual equivalence between $\var{A}$ and $\StoneL$.
\end{theorem}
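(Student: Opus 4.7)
The plan is to prove the duality by first establishing it on the finite subcategories $\var{A}^\omega$ and $\StoneL^\omega$, and then lifting to the full categories via Ind/Pro-completions, in the spirit of Johnstone's treatment of Stone duality as hinted at in the introduction.

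For the finite duality, I would exploit Proposition \ref{Afin = PS(L)}: every $\alg{A} \in \var{A}^\omega$ is a finite product $\prod_{i=1}^{n} \alg{S}_i$ of subalgebras of $\alg{L}$. The key calculation is to describe $\var{A}(\prod_{i=1}^{n} \alg{S}_i, \alg{L})$. Since each $\alg{S}_i$ is simple and $\var{A}$ is arithmetical (Proposition \ref{SP-char-simplearithmetic}), a Chinese-Remainder-style argument forces any homomorphism into the simple algebra $\alg{L}$ to factor through some projection $\pi_i \colon \prod_{j} \alg{S}_j \to \alg{S}_i$ followed by a homomorphism $\alg{S}_i \to \alg{L}$. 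The latter is an internal isomorphism between $\alg{S}_i$ and its image, hence by semi-primality it is the identity inclusion $\iota_i \colon \alg{S}_i \hookrightarrow \alg{L}$. Thus $\var{A}(\alg{A},\alg{L})$ is naturally the finite discrete space $\{1,\dots,n\}$ with valuation $i \mapsto \alg{S}_i$. Conversely, unwinding $\Pi_\alg{L}$ on a finite $(X,\val{v}) \in \StoneL^\omega$ with $X = \{x_1,\dots,x_n\}$ directly yields $\prod_{i=1}^{n} \val{v}(x_i)$. The evident evaluation units and counits are readily checked to be isomorphisms, giving the finite duality $\var{A}^\omega \simeq (\StoneL^\omega)^{op}$.

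To pass to the full categories, I would invoke the two completion statements $\var{A} \simeq \Ind(\var{A}^\omega)$ and $\StoneL \simeq \Pro(\StoneL^\omega)$. The first follows from local finiteness of $\var{A}$: since every object of $\var{A}^\omega$ is a product of subalgebras of $\alg{L}$, every $\alg{A} \in \var{A}$ is the filtered colimit of its finite subalgebras. The second (Theorem \ref{thm:ProSetL=StoneL}) captures the fact that the profinite structure of a Stone space is compatible with the $\mathbb{S}(\alg{L})$-labelling, which is exactly what the continuity condition on $\val{v}$ in Definition \ref{def:StoneL} enforces. Since $\Sigma_\alg{L}$ and $\Pi_\alg{L}$ are defined as hom-functors into $\alg{L}$ and $(L,\bm{\langle\cdot\rangle})$ respectively, they convert the relevant filtered colimits into cofiltered limits and vice versa, so the finite duality extends formally via Ind/Pro-functoriality to the desired equivalence $\var{A} \simeq \StoneL^{op}$.

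The main obstacle I foresee lies in establishing $\StoneL \simeq \Pro(\StoneL^\omega)$: one must verify that the closedness condition on $\val{v}^{-1}(\alg{S}{\downarrow})$ is precisely what is needed to reconstruct $(X,\val{v})$ as a cofiltered limit of its finite approximations in $\StoneL^\omega$, and that morphisms in $\StoneL$ correspond bijectively to compatible families of morphisms into those approximations. Once this structural fact is proven, the remainder is formal category theory combining the finite duality with the two completion equivalences.
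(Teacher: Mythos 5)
Your proposal follows essentially the same route as the paper: the finite dual equivalence is obtained from the fact that homomorphisms $\prod_i \alg{S_i} \to \alg{L}$ are exactly projections followed by inclusions (the paper's Proposition \ref{prop:homsareprojections}, proved via congruence distributivity, simplicity, and the absence of non-trivial internal isomorphisms, just as in your Chinese-Remainder-style sketch), and the lift to the infinite level uses $\var{A} \simeq \Ind(\var{A}^\omega)$, $\StoneL \simeq \Pro(\SetL^\omega)$ and Lemma \ref{DualityIndC-ProD}. The obstacle you correctly single out, namely $\StoneL \simeq \Pro(\StoneL^\omega)$, is exactly the paper's Theorem \ref{thm:ProSetL=StoneL}, whose proof (completeness of $\StoneL$, finite copresentability of finite objects, and essential surjectivity via clopen partitions) is the main technical content you would still have to supply.
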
 
The remainder of this section is dedicated to an alternative proof of this theorem. The idea is to directly prove the duality on the finite level, and then lift it to the infinite level using the following categorical constructions. 

\begin{definition}
For a finitely complete and cocomplete category $\cate{C}$, its completion under filtered colimits is denoted by $\Ind(\cate{C})$ and, dually, its completion under cofiltered limits is denoted by $\Pro(\cate{C})$. 
\end{definition} 
For example, $\Ind(\BA^\omega) \simeq \BA$ and $\Pro(\Set^\omega) \simeq \Stone$. 
More material about these completions can be found in Johnstone's book \cite[Chapter VI]{Johnstone1982} (in particular, a more rigorous definition of the $\Ind$-completion is given in VI.1.2). We only recite the following, which allows us to lift dualities between small categories (following Johnstone, dualities arising this way are called \emph{Stone type} dualities).  

\begin{lemma}{\cite[Lemma VI 3.1]{Johnstone1982}}\label{DualityIndC-ProD}
Let $\cate{C}$ and $\cate{D}$ be small categories which are dually equivalent. Then $\Ind(\cate{C})$ is dually equivalent to $\Pro(\cate{D})$. 
\end{lemma}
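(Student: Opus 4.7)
The plan is to leverage the universal property of the $\Ind$-completion together with the defining relation $\Pro(\cate{D}) \simeq \Ind(\cate{D}^{op})^{op}$, so that the lemma reduces to the observation that $\Ind(-)$ is a well-defined 2-functor on small categories and thus preserves equivalences. So a dual equivalence $\cate{C} \simeq \cate{D}^{op}$ will transport along $\Ind(-)$ to an equivalence $\Ind(\cate{C}) \simeq \Ind(\cate{D}^{op})$, and then passing to opposites yields the desired dual equivalence between $\Ind(\cate{C})$ and $\Pro(\cate{D})$.

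Concretely, I would proceed in the following steps. First I would recall the universal characterization of $\Ind(\cate{C})$: it is a category with filtered colimits, equipped with a fully faithful embedding $y_{\cate{C}}\colon \cate{C} \to \Ind(\cate{C})$, such that for any category $\cate{E}$ which admits filtered colimits, precomposition with $y_{\cate{C}}$ induces an equivalence between the category of filtered-colimit-preserving functors $\Ind(\cate{C}) \to \cate{E}$ and the category of all functors $\cate{C} \to \cate{E}$. Second, given a dual equivalence witnessed by functors $F\colon \cate{C} \to \cate{D}^{op}$ and $G\colon \cate{D}^{op} \to \cate{C}$ with $GF \simeq \id_{\cate{C}}$ and $FG \simeq \id_{\cate{D}^{op}}$, I would apply the universal property to extend $y_{\cate{D}^{op}} \circ F \colon \cate{C} \to \Ind(\cate{D}^{op})$ to a filtered-colimit-preserving functor $\tilde{F}\colon \Ind(\cate{C}) \to \Ind(\cate{D}^{op})$, and analogously extend $G$ to $\tilde{G}\colon \Ind(\cate{D}^{op}) \to \Ind(\cate{C})$.

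Third, I would verify that $\tilde{G} \tilde{F} \simeq \id_{\Ind(\cate{C})}$ and $\tilde{F}\tilde{G} \simeq \id_{\Ind(\cate{D}^{op})}$. Both $\tilde{G}\tilde{F}$ and the identity on $\Ind(\cate{C})$ are filtered-colimit-preserving, and they agree (up to natural isomorphism) after precomposition with $y_{\cate{C}}$ because $GF \simeq \id_{\cate{C}}$; by the uniqueness clause of the universal property, they must be isomorphic. The same argument handles the other composite, so $\tilde{F}$ is an equivalence of categories. Fourth and finally, I would invoke the definitional identity $\Pro(\cate{D}) = \Ind(\cate{D}^{op})^{op}$ (which is how Johnstone introduces $\Pro$) to reinterpret the equivalence $\Ind(\cate{C}) \simeq \Ind(\cate{D}^{op})$ as a dual equivalence $\Ind(\cate{C}) \simeq \Pro(\cate{D})^{op}$, completing the proof.

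The main obstacle I anticipate is not conceptual but notational: one must be careful about variance when passing between $\cate{D}$, $\cate{D}^{op}$, $\Ind(\cate{D}^{op})$, and $\Pro(\cate{D})$, so that the final equivalence genuinely witnesses a \emph{duality} rather than a covariant equivalence. The smallness hypothesis on $\cate{C}$ and $\cate{D}$ enters only to guarantee that $\Ind(\cate{C})$ and $\Ind(\cate{D}^{op})$ are legitimate (locally small) categories, ensuring that the universal property can be applied unambiguously. Everything else is a routine invocation of the 2-functoriality of the free filtered-cocompletion.
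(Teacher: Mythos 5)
Your argument is correct. Note, however, that the paper itself offers no proof of this lemma: it is quoted verbatim from Johnstone's \emph{Stone Spaces} (Lemma VI 3.1), so there is no internal proof to compare against. Your route --- extend $y_{\cate{D}^{op}}\circ F$ and $y_{\cate{C}}\circ G$ along the universal property of the free filtered cocompletion, use the uniqueness (up to isomorphism) clause to see that the extended composites are isomorphic to the identities, and then rewrite $\Ind(\cate{D}^{op})$ as $\Pro(\cate{D})^{op}$ --- is the standard one and is essentially the 2-functoriality argument underlying Johnstone's treatment. The only alternative worth mentioning is the more concrete version: represent objects of $\Ind(\cate{C})$ as filtered diagrams in $\cate{C}$ and objects of $\Pro(\cate{D})$ as cofiltered diagrams in $\cate{D}$, observe that a dual equivalence $\cate{C}\simeq\cate{D}^{op}$ carries filtered diagrams in $\cate{C}$ to cofiltered diagrams in $\cate{D}$, and check directly that it identifies the hom-set formulas $\lim_i\mathrm{colim}_j\,\cate{C}(X_i,Y_j)$ and $\lim_j\mathrm{colim}_i\,\cate{D}(Y_j,X_i)$; this buys explicitness at the cost of bookkeeping, whereas your universal-property argument hides the bookkeeping in the uniqueness clause. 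Your remarks on variance and on where smallness is used are accurate, and the verification that the fully faithful precomposition equivalence reflects isomorphism of the extended functors is exactly the point that makes the third step go through.
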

Our argument to prove Theorem \ref{SPDuality} now has the following outline. The role of $\cate{C}$ will be played by $\var{A}^\omega$. Since $\var{A}$ is locally finite (see, \emph{e.g.}, \cite[Lemma 1.3.2]{ClarkDavey1998}), it is well-known that $\Ind(\var{A}^\omega) \simeq \var{A}$ (see, \emph{e.g.}, {\cite[Corollary VI 2.2]{Johnstone1982}}). The role of $\cate{D}$ will be played by $\StoneL^\omega$. Since the topology doesn't matter here (because it is always discrete), we will denote this category by $\SetL^\omega$ instead. To get the finite dual equivalence, we make the following observation

\begin{proposition}\label{prop:homsareprojections}
Let $\alg{S_1},\dots,\alg{S_n}$ be subalgebras of $\alg{L}$. Then the set of homomorphisms $\mathcal{A}(\prod_{i\leq n} \alg{S_i}, \alg{L})$ consists exactly of the projections followed by inclusions $$\pr_i\colon \prod_{i\leq n} \alg{S_i} \to \alg{S_i} \hookrightarrow \alg{L}$$ in each component $i\leq n$.
\end{proposition}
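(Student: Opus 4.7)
The plan is to reduce the claim to the case $n=1$ by a congruence-theoretic argument, crucially using Proposition \ref{SP-char-simplearithmetic}, which tells us every subalgebra of $\alg{L}$ is simple and $\var{A}$ is arithmetical.

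For the base case $n=1$, I would take $h \colon \alg{S_1} \to \alg{L}$ and observe that, since $\alg{L}$ has an underlying bounded lattice, $h$ preserves the constants $0$ and $1$. Hence $\ker h$ is a proper congruence on $\alg{S_1}$, and because $\alg{S_1}$ is simple it must be the identity congruence, so $h$ is injective. Corestricting to its image then yields an isomorphism between the subalgebras $\alg{S_1}$ and $h(\alg{S_1})$, i.e.\ an internal isomorphism of $\alg{L}$. By Proposition \ref{SP-char-discriminator} this must be the identity, so $h(\alg{S_1}) = \alg{S_1}$ and $h$ is the inclusion.

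For general $n$, let $h \colon \prod_{i \leq n} \alg{S_i} \to \alg{L}$. The image $h(\prod_i \alg{S_i})$ is a subalgebra of $\alg{L}$, hence simple, so $\ker h$ is a coatom of the congruence lattice of $\prod_i \alg{S_i}$. Arithmeticity of $\var{A}$ makes this congruence lattice distributive, and in a distributive lattice every coatom is meet-prime. Since the kernels $\ker(\pr_i)$ of the projections intersect to the identity congruence, meet-primeness gives some $i \leq n$ with $\ker(\pr_i) \leq \ker h$. The universal property of quotients then factors $h$ as $\tilde{h} \circ \pr_i$ for a homomorphism $\tilde{h}\colon \alg{S_i} \to \alg{L}$, and the $n=1$ case identifies $\tilde{h}$ with the inclusion of $\alg{S_i}$ into $\alg{L}$.

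The only slightly delicate point is the meet-primeness of coatoms, which is a short lattice-theoretic step: if $\ker(\pr_i) \not\leq \ker h$ for all $i$, coatom-maximality of $\ker h$ forces $\ker h \vee \ker(\pr_i)$ to equal the full congruence for each $i$, and then distributing over the intersection $\bigcap_i \ker(\pr_i)$ (which lies below $\ker h$) collapses $\ker h$ to the full congruence, contradicting $h(0) = 0 \neq 1 = h(1)$. Beyond this, the argument only invokes characterizations of semi-primality already recorded.
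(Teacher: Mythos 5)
Your argument is correct, but it takes a different route from the paper's. The paper also starts from congruence distributivity (Proposition \ref{SP-char-simplearithmetic}), but then invokes the Fraser--Horn property: the kernel $\theta$ of $h\colon \prod_{i\leq n}\alg{S_i}\to\alg{L}$ decomposes as a product of congruences $\theta_i$ on the factors, so $\im(h)\cong\prod_i(\alg{S_i}/\theta_i)$; simplicity of $\im(h)$ forces exactly one factor to be non-trivial, simplicity of that $\alg{S_j}$ gives $\alg{S_j}/\theta_j\cong\alg{S_j}$, and the internal-isomorphism rigidity of Proposition \ref{SP-char-discriminator} identifies $h$ with $\pr_j$. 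You instead avoid Fraser--Horn altogether: from simplicity of $\im(h)$ and the correspondence theorem you get that $\ker h$ is a coatom of $\mathrm{Con}(\prod_i\alg{S_i})$, and the elementary fact that coatoms are meet-prime in a distributive lattice, applied to $\bigcap_i\ker(\pr_i)=\Delta\leq\ker h$, yields $\ker(\pr_i)\leq\ker h$ for some $i$, so $h$ factors through $\pr_i$; your $n=1$ case (simplicity of $\alg{S_i}$ plus rigidity of internal isomorphisms) then finishes. The two proofs lean on the same structural inputs --- congruence distributivity, simplicity of all subalgebras of $\alg{L}$, and the absence of non-trivial internal isomorphisms --- but yours is more self-contained, replacing the citation of the Fraser--Horn product-decomposition of congruences by a two-line lattice computation, at the cost of being slightly longer; the paper's version gets the factorization through a single projection in one step once Fraser--Horn is granted. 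Both implicitly use that $0\neq 1$ in $\alg{L}$ to rule out the trivial image, so no gap arises there that is not also present in the paper.
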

\begin{proof}
Our proof is similar to that of \cite[Theorem 2.5]{ChajdaGoldstern2018}.
Let $h\colon\prod_{i\leq n} \alg{S_i}\to \alg{L}$ be a homomorphism. Since $\mathcal{A}$ is congruence distributive (Proposition \ref{SP-char-simplearithmetic}), it has the Fraser-Horn property, meaning that the congruence $\theta := \mathsf{ker}(h)$ is a product of congruences $\theta_i$ on $\alg{S_i}$. By the isomorphism theorem we find
$$ (\prod_{i\leq n} \alg{S_i})/\theta \cong \prod_{i\leq n} (\alg{S_i}/\theta_i)\cong \im(h).$$
Since $\im(h)$ is a subalgebra of $\alg{L}$ and thus simple by Proposition~\ref{SP-char-simplearithmetic}, at most one factor of $\prod_{i\leq n} (\alg{S_i}/\theta_i)$ can be non-trivial. Since $\im(h)$ contains at least two elements (that is, $0$ and $1$), precisely one factor, say $\alg{S_j}/\theta_j$, is non-trivial. Since $\alg{S_j}$ is itself semi-primal, it is simple, so $\alg{S_j}/\theta_j \cong \alg{S_j}$. So $h$ induces an internal isomorphism $\alg{S_j} \cong \im(h)$, but by Proposition \ref{SP-char-discriminator} this can only be the identity on $\alg{S_j}$, thus $h$ coincides with $\pr_j$.
\end{proof} 
\begin{corollary}\label{cor:finiteeq}
The (restrictions of the) functors $\Pi_\alg{L}$ and $\Sigma_\alg{L}$ establish a dual equivalence between the small categories ${\SetL}^\omega$ and $\var{A}^\omega$.   
\end{corollary}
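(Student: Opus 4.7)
My plan is to verify that the unit $\eta_\alg{A}\colon \alg{A} \to \Pi_\alg{L}\Sigma_\alg{L}(\alg{A})$ and counit $\epsilon_{(X,\val{v})}\colon (X,\val{v})\to \Sigma_\alg{L}\Pi_\alg{L}(X,\val{v})$, both given by evaluation, are isomorphisms when restricted to finite objects. The key ingredients I would combine are Proposition \ref{Afin = PS(L)} (every finite algebra in $\var{A}$ is a product of subalgebras of $\alg{L}$) and Proposition \ref{prop:homsareprojections} (homomorphisms out of such a product into $\alg{L}$ are exactly the component projections).

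First I would analyse $\Sigma_\alg{L}$ on a finite algebra. Given $\alg{A} \in \var{A}^\omega$, fix an isomorphism $\alg{A} \cong \prod_{i\leq n}\alg{S}_i$ with $\alg{S}_i \leq \alg{L}$. By Proposition \ref{prop:homsareprojections} the underlying set of $\Sigma_\alg{L}(\alg{A})$ is the finite set $\{\pr_1,\dots,\pr_n\}$, and the labelling satisfies $\val{im}(\pr_i) = \alg{S}_i$. Applying $\Pi_\alg{L}$ to this finite labelled set yields the algebra of all maps $g\colon\{\pr_1,\dots,\pr_n\}\to L$ with $g(\pr_i)\in \alg{S}_i$ (pointwise operations). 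The map $a \mapsto (\pr_i \mapsto \pr_i(a))$ is precisely the inverse of our chosen isomorphism $\alg{A}\cong\prod_i\alg{S}_i$, so $\eta_\alg{A}$ is a bijective homomorphism, hence an isomorphism in $\var{A}^\omega$.

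Next I would treat $\Pi_\alg{L}$ on a finite object $(X,\val{v})$ of $\SetL^\omega$ (where the topology is necessarily discrete). By definition $\Pi_\alg{L}(X,\val{v}) = \prod_{x\in X}\val{v}(x)$ (as an algebra, with pointwise operations). Applying $\Sigma_\alg{L}$ and using Proposition \ref{prop:homsareprojections} once more, the homomorphisms from this product to $\alg{L}$ are exactly the evaluations $\ev_x$ for $x \in X$, each with image $\val{v}(x)$. Hence the evaluation map $\epsilon_{(X,\val{v})}\colon x\mapsto \ev_x$ is a bijection $X \to \Sigma_\alg{L}\Pi_\alg{L}(X,\val{v})$ that carries the labelling $\val{v}$ to $\val{im}$ identically, so it is an isomorphism in $\SetL^\omega$.

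Finally, naturality of $\eta$ and $\epsilon$ follows directly from their definition as evaluation, since both $\Sigma_\alg{L}$ and $\Pi_\alg{L}$ act on morphisms by precomposition; this is a routine check. I do not expect any serious obstacle: the only nontrivial input is Proposition \ref{prop:homsareprojections}, which is already available, and the rest is book-keeping. The mild subtlety worth flagging is ensuring that the isomorphism $\alg{A}\cong\prod_i\alg{S}_i$ used to enumerate homomorphisms is irrelevant for the conclusion, because $\eta_\alg{A}$ is defined intrinsically via evaluation and any two enumerations of $\var{A}(\alg{A},\alg{L})$ yield the same labelled set $\Sigma_\alg{L}(\alg{A})$.
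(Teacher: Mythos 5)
Your proposal is correct and follows essentially the same route as the paper: both composites are computed on finite objects using Proposition \ref{Afin = PS(L)} and Proposition \ref{prop:homsareprojections}, which is exactly what the paper does. The only cosmetic difference is in packaging — you assemble the dual equivalence by checking that the evaluation unit and counit are natural isomorphisms, while the paper records the two object-level isomorphisms and then verifies the dual adjunction through a hom-set decomposition $\var{A}^\omega\big(\Pi_\alg{L}(X,\val{v}),\alg{A}\big) \cong \SetL^\omega\big(\Sigma_\alg{L}(\alg{A}),(X,\val{v})\big)$.
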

\begin{proof}
Let $(X,\val{v}) \in \SetL^\omega$. Then 
$$\Sigma_\alg{L}\Pi_\alg{L}(X,\val{v}) = \Big(\mathcal{A}\big(\prod_{x\in X} \val{v}(x), \alg{L}\big),\val{im}\Big).$$ By Proposition \ref{prop:homsareprojections} this is equal to $(\{ \pr_x \mid x\in X \}, \val{im})$, which is clearly isomorphic to $(X,\val{v})$. 

On the other hand, starting with $\alg{A} \in \mathcal{A}^\omega$, we know by Proposition \ref{Afin = PS(L)} that it is a product of subalgebras $\alg{A} = \prod_{i\leq n} \alg{S_i}$. Now, again due to Proposition \ref{prop:homsareprojections}, we get $\Sigma_\alg{L}(\alg{A}) = (\{ \pr_i \mid i\leq n \}, \val{im})$, and thus 
$$\Pi_\alg{L}\Sigma_\alg{L}(\alg{A}) \cong \prod_{i\leq n} \val{im}(\pr_i) \cong \prod_{i\leq n} \alg{S_i}.$$ 

To see that $\Pi_\alg{L}$ and $\Sigma_\alg{L}$ form a dual adjunction we note that for $\alg{A} = \prod_{i\leq n}\alg{S_i} \in \var{A}^\omega$ and $(X,\val{v})\in \SetL^\omega$ we have 
$$ \var{A}^\omega\big(\Pi_\alg{L}(X,\val{v}), \alg{A}\big) \cong \prod_{i\leq n} \var{A}^\omega\big(\Pi_\alg{L}(X,\val{v}),\alg{S_i}\big) $$
and 
$$\SetL^\omega\big(\Sigma_\alg{L}(\alg{A}),(X,\val{v})\big) \cong\SetL^\omega(\coprod_{i{\leq n}}(\{ \pr_i\}, \val{im}),(X,\val{v})) \cong \prod_{i{\leq n}} \SetL^\omega\big((\{ \pr_i\}, \val{im}),(X,\val{v})\big)$$
where the coproduct in $\SetL^\omega$ is the obvious disjoint union. So we only need to show that 
$$ \var{A}^\omega\big(\Pi_\alg{L}(X,\val{v}),\alg{S_i}\big) \cong \SetL^\omega\big((\{ \pr_i\}, \val{im}), (X,\val{v})\big).$$
But this is obvious since the elements of the left-hand side are exactly the projections with image contained in $\alg{S_i}$, which are in bijective correspondence with the points of $X$ with $\val{v}(x)\leq \alg{S_i}$, that is, with elements of the right-hand side.    
\end{proof}
In order to successfully apply Lemma \ref{DualityIndC-ProD}, it remains to show the following.

\begin{theorem}\label{thm:ProSetL=StoneL}
$\Pro(\SetL^\omega)$ is categorically equivalent to $\StoneL$.
\end{theorem}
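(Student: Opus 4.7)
The plan is to set up two functors and verify the equivalence directly. First, I check that $\StoneL$ admits cofiltered limits, computed by taking the cofiltered limit $X = \lim X_\alpha$ of the underlying Stone spaces together with the labelling $\val{v}(x) = \bigvee_\alpha \val{v}_\alpha(p_\alpha(x))$, a join in the finite lattice $\mathbb{S}(\alg{L})$ (here the $p_\alpha \colon X \to X_\alpha$ are the projections of the limit). Since morphisms of $\StoneL$ are label-decreasing, the family $(\val{v}_\alpha(p_\alpha(x)))_\alpha$ increases along refinements and, being valued in a finite lattice, actually stabilises, so the join is attained. The closedness of $\val{v}^{-1}(\alg{S}{\downarrow})$ and the universal property are then straightforward to verify. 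The inclusion $\SetL^\omega \hookrightarrow \StoneL$ therefore extends canonically to a functor $L \colon \Pro(\SetL^\omega) \to \StoneL$, and the goal is to show that $L$ is an equivalence.

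For essential surjectivity, fix $(X, \val{v}) \in \StoneL$ and consider the codirected system $\mathcal{D}$ of its \emph{finite approximations}: morphisms $\pi \colon (X, \val{v}) \to (F, \alg{S})$ in $\StoneL$ with $F$ finite (codirectedness follows from the product $(F_1 \times F_2, \alg{S}_1 \vee \alg{S}_2)$). I claim $L(\mathcal{D}) \cong (X,\val{v})$. The recovery of the underlying Stone space $X$ from its finite clopen quotients is standard. For the labelling, I need that for each $x \in X$ some approximation attains $\alg{S}(\pi(x)) = \val{v}(x)$. This reduces to producing a clopen neighbourhood $U$ of $x$ with $U \subseteq \val{v}^{-1}(\val{v}(x){\uparrow})$, because then the partition $\{U, X \setminus U\}$ labelled by $\val{v}(x)$ on $U$ and by the smallest subalgebra of $\alg{L}$ on $X\setminus U$ is such an approximation. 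The crux is therefore the \emph{openness of $\val{v}^{-1}(\alg{T}{\uparrow})$} for every $\alg{T} \in \mathbb{S}(\alg{L})$; its complement is $\bigcup_{a \in \alg{T}} \{y : a \notin \val{v}(y)\}$, and since $\mathbb{S}(\alg{L})$ is finite each summand rewrites as the finite union $\bigcup_{\alg{S}} \val{v}^{-1}(\alg{S}{\downarrow})$ over the maximal subalgebras $\alg{S} \leq \alg{L}$ with $a \notin \alg{S}$, hence is closed.

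For fullness and faithfulness, faithfulness is immediate from the defining colimit formula for hom-sets in $\Pro(\SetL^\omega)$. For fullness, a morphism $m \colon L(\mathcal{X}) \to L(\mathcal{Y})$ reduces, after composing with each projection to $(Y_\beta, \val{w}_\beta)$, to factoring the composite through some $(X_\alpha, \val{v}_\alpha) \in \SetL^\omega$. Stone compactness handles the underlying continuous map, so only uniform label-compatibility remains. The ``bad sets'' $E_\alpha = \{x \in L(\mathcal{X}) : \val{w}_\beta(m(x)) \not\leq \val{v}_\alpha(p_\alpha(x))\}$ are clopen (finite unions of intersections of clopen preimages of points of the discrete spaces $Y_\beta$ and $X_\alpha$), form a decreasing family along the cofiltered index, and have empty intersection because the labels $\val{v}_\alpha(p_\alpha(x))$ stabilise to $\val{v}(x) \geq \val{w}_\beta(m(x))$. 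By compactness of $L(\mathcal{X})$ some $E_\alpha$ is already empty, yielding the desired factorisation.

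I expect the main technical obstacle to be the essential-surjectivity step, and specifically the openness of $\val{v}^{-1}(\alg{T}{\uparrow})$: this is precisely where the defining topological condition of $\StoneL$ is used together with the finiteness of $\mathbb{S}(\alg{L})$ (the latter itself a consequence of semi-primality of $\alg{L}$). The remaining steps are routine applications of Stone compactness and pro-categorical machinery.
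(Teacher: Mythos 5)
Your overall plan (extend the inclusion $\SetL^\omega \hookrightarrow \StoneL$ along cofiltered limits, prove fullness/faithfulness by factoring maps through finite stages, and prove essential surjectivity via finite approximations) is the same as the paper's, and your essential-surjectivity step is fine: the openness of $\val{v}^{-1}(\alg{T}{\uparrow})$, deduced from the closedness of the sets $\val{v}^{-1}(\alg{S}{\downarrow})$ together with finiteness of $\mathbb{S}(\alg{L})$, does give label-attaining two-block quotients, and this is a clean variant of the paper's closure argument with its partition labelling. (Do note that cofilteredness of the system of all finite approximations needs the equalizer condition as well as binary products; this is easy, using equalizers in $\SetL^\omega$, but it is not covered by the product construction you mention.)

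The genuine gap is in the fullness step. Emptiness of some bad set $E_\alpha$ tells you that $\val{w}_\beta\big(q_\beta(m(x))\big) \leq \val{v}_\alpha(p_\alpha(x))$ for every point $x$ of the limit $L(\mathcal{X})$, i.e.\ the label condition for the candidate factorization $g_\alpha \colon X_\alpha \to Y_\beta$ is verified only at points of the image $p_\alpha\big(L(\mathcal{X})\big) \subseteq X_\alpha$. But for $g_\alpha$ to be a morphism of $\SetL^\omega$ you need $\val{w}_\beta(g_\alpha(y)) \leq \val{v}_\alpha(y)$ at \emph{every} $y \in X_\alpha$, and your argument gives no control at points outside the image of $p_\alpha$. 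This is exactly the difficulty the paper's proof confronts (``$g_i$ is not necessarily a morphism in $\StoneL$'') and resolves by stabilizing the images $\mu_j(X_j)$ inside the finite set $\mathbb{S}(\alg{L})^2$. Your argument is repairable by one more stabilization step: the sets $f_{\alpha\alpha'}(X_{\alpha'})$, $\alpha' \geq \alpha$, form a codirected decreasing family of subsets of the finite set $X_\alpha$ whose intersection is $p_\alpha\big(L(\mathcal{X})\big)$, so some $\alpha'$ satisfies $f_{\alpha\alpha'}(X_{\alpha'}) = p_\alpha\big(L(\mathcal{X})\big)$, and then $g_\alpha \circ f_{\alpha\alpha'}$ is label-decreasing on all of $X_{\alpha'}$. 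The same stabilization fact is what is really behind your claims that ``faithfulness is immediate'' and that the stagewise factorizations assemble into an element of $\lim_\beta \mathrm{colim}_\alpha \SetL^\omega(X_\alpha, Y_\beta)$ (essential uniqueness of the factorization); as written these are asserted rather than proved, whereas the paper packages all of them into the single statement that the finite objects are finitely copresentable in $\StoneL$ and then invokes the $\Pro$-analogue of Johnstone's Lemma VI.1.8.
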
  
\begin{proof}
First we show that the category $\StoneL$ is complete. For an index set $I$ (which we often omit), we claim that the product is computed as 
$$ \prod_{i\in I} (X_i, \val{v_i}) = (\prod_{i\in I} X_i, \bigvee \val{v_i}),$$
where $\bigvee \val{v_i} (p) = \bigvee (\val{v_i}(p_i))$ for all $p\in \prod X_i$. It follows from
$$(\bigvee \val{v_i})^{-1}(\alg{S}{\downarrow}) = \prod \val{v_i}^{-1}(\alg{S}{\downarrow})$$ that this defines a member of $\StoneL$.   
Note that the projections are morphisms in $\StoneL$ since 
$$ \val{v_i}(\pi_i(p)) = \val{v_i}(p_i) \leq \bigvee_{j\in I} \val{v_j}(p_j) = (\bigvee \val{v_j})(p).$$
If $(\gamma_i\colon (Y,\val{w})\to (X_i,\val{v_i}) \mid i\in I)$ is another cone, there is a unique continuous map $f\colon Y\to \prod X_i$ with $\pi_i \circ f = \gamma_i$. This map is a morphism in $\StoneL$ since 
$$ (\bigvee \val{v_i})(f(y)) = \bigvee \val{v_i}\big(\pi_i(f(y))\big) = \bigvee \val{v_i}\big(\gamma_i(f(y))\big) \leq \val{w}(y),$$
where the last inequality follows from $\val{v_i}(\gamma_i)(y) \leq \val{w}(y)$ which is true since the $\gamma_i$ are morphisms in $\StoneL$. 
The equalizer of $f,g\colon (X,\val{v})\to (Y,\val{w})$ is simply given by $(Eq, \val{v}{\mid}_{Eq})$ where $Eq \subseteq X$ is the corresponding equalizer in $\Stone$. It follows that $\StoneL$ has all limits. In particular, $\StoneL$ has all cofiltered limits, so the natural inclusion functor $\iota\colon\SetL^\omega \hookrightarrow \StoneL$ has a unique cofinitary (that is, cofiltered limit preserving) extension 
$$\hat{\iota} \colon \Pro(\SetL^\omega) \hookrightarrow \StoneL.$$
Since $\iota$ is fully faithful, to conclude that the functor $\hat{\iota}$ is fully faithful as well it suffices to show that $\iota$ maps all objects to finitely copresentable objects in $\StoneL$ (this is due to the analogue of \cite[Theorem VI.1.8]{Johnstone1982} for the $\Pro$-completion). So we need to show that any $(C,\val{w}) \in \StoneL$ where $C$ is a finite discrete space is finitely copresentable. In other words, we need to show that, whenever $(X, \val{v}) \cong \lim_{i\in I} (X_i, \val{v_i})$ is a cofiltered limit of a diagram $(f_{ij}\colon (X_j,\val{v_j})\to (X_i, \val{v_i}) \mid i\leq j)$ in $\StoneL$ with limit morphisms $p_i\colon (X,\val{v})\to (X_i,\val{v_i})$, any morphism $f\colon (X,\val{v})\to (C,\val{w})$ factors essentially uniquely through one of the $p_i$. For this we can employ an argument similar to the one in the proof of \cite[Lemma 1.1.16(b)]{RibesZalesskii2010}. On the underlying level of $\Stone$, where finite discrete spaces are finitely copresentable, the continuous map $f$ factors essentially uniquely through some $p_i$, say via the continuous map $g_i \colon X_i \to C$. However, $g_i$ is not necessarily a morphism in $\StoneL$. Consider $J = \{ j \geq i\}$ and for each $j\in J$ define $g_j = f_{ij} \circ g_i$. Define the continuous maps $\mu \colon X\to \mathbb{S}(\alg{L})^2$ and $\mu_j\colon X_j \to \mathbb{S}(\alg{L})^2$ for all $j\in J$ by
$$\mu(x) = \big(\val{w}(f(x)), \val{v}(x)\big) \text{ and } \mu_j(x) = \big(\val{w}(g_j(x)), \val{v_j}(x)\big).$$   
Since $\mu(X) = \lim_{j\in J} \mu_j(X_j) = \bigcap_{j \geq i} \mu_j(X_j)$ is contained in the finite set $\mathbb{S}(\alg{L})^2$ and $J$ is directed, there is some $k\in J$ such that 
$$ \mu(X) = \mu_k(X_k).$$
But now, since $f$ is a morphism in $\StoneL$, we have that $\mu(X) \subseteq \{ (\alg{S},\alg{T}) \mid \alg{S} \leq \alg{T} \}$, and thus the same holds for $\mu_k(X_k)$. Thus $g_k$ is a morphism in $\StoneL$ which has the desired properties. 
 
To finish the proof we show that $\hat{\iota}$ is essentially surjective, in other words, we show that every element $(X,\val{v})$ of $\StoneL$ is isomorphic to a cofiltered limit of elements of $\SetL^\omega$. We do this in a manner similar to \cite[Theorem 1.1.12]{RibesZalesskii2010}. Let $\mathcal{R}$ consist of all finite partitions of $X$ into clopen sets. Together with the order $R \leq R'$ if and only if $R'$ refines $R$ this forms a codirected set and in \cite[Theorem 1.1.12]{RibesZalesskii2010} it is shown that $X \cong \lim_{R\in \mathcal{R}} R$. We now turn every $R\in \mathcal{R}$ into a member of $\SetL^\omega$ by endowing it with an appropriate $v_R\colon R \to \mathbb{S}(\alg{L})$ and show that $(X,v) = \lim_{R\in \mathcal{R}}(R,v_R)$. For $R\in \mathcal{R}$, say $R = \{ \Omega_1, \dots, \Omega_k \}$, we define 
$$v_R^{-1}(\alg{S}{\downarrow}) = \{ \Omega_i \mid \Omega_i \cap \val{v}^{-1}(\alg{S}{\downarrow})  \neq \varnothing\}.$$ 
The map $p_R\colon X \to R$ defined by $p_R(x) = \Omega_i \Leftrightarrow x\in \Omega_i$ is a morphism in $\StoneL$ since $\val{v}(x) = \alg{S}$ and $x\in \Omega_i$ implies $v_R(p_R(x)) \in v_R^{-1}(\alg{S}{\downarrow})$. Is is easy to see that this defines a cone over the diagram $(R,v_R)_{R\in \mathcal{R}}$, so there is a unique $f\colon (X,\val{v}) \to \lim_{R\in\mathcal{R}}(R,v_R)$ in $\StoneL$. As in $\Stone$, the map $f$ is a homeomorphism. To complete the proof it suffices to show that $f^{-1}$ is a morhpism in $\StoneL$ as well. Say $\lim_{R\in\mathcal{R}}(R, v_R) = (Y, \val{w})$ and let $\pi_R \colon (Y,\val{w})\to (R,v_R)$ denote the limit morphisms.  Assuming $\val{w}(y) = \alg{S}$ we want to show $f^{-1}(y) \in \val{v}^{-1}(\alg{S}{\downarrow})$. Let $\Omega \subseteq X$ be an arbitrary clopen set containing $f^{-1}(y)$. Then $R = \{ \Omega, X{\setminus}\Omega \} \in \mathcal{R}$ and 
$$\Omega = p_R(f^{-1}(y)) = \pi_R(y) \in v_R^{-1}(\alg{S}{\downarrow}).$$
By definition this means that $\Omega \cap \val{v}^{-1}(\alg{S}{\downarrow}) \neq \varnothing$. Since this holds for every $\Omega$ containing $f^{-1}(y)$, this implies that $f^{-1}(y)$ is in the closure $\overline{\val{v}^{-1}(\alg{S}{\downarrow})}$. However, this closure coincides with $\val{v}^{-1}(\alg{S}{\downarrow})$, since by definition of $\StoneL$ this is a closed set already.        
\end{proof} 

As discussed before, this yields our alternative proof of Theorem \ref{SPDuality}. In Section \ref{sec:canext} we investigate the other dual equivalence which can be obtained from the finite dual equivalence of Corollary \ref{cor:finiteeq}. More specifically, there we describe $\Ind(\SetL^\omega)$ and its dual, the category of profinite algebras $\Pro(\var{A}^\omega)$. This is the `semi-primal version' of the duality between $\Ind(\Set^\omega) \simeq \Set$ and $\Pro(\BA^\omega) \simeq \CABA$. 

Before that, in the following section we investigate the relationship between $\StoneL$ and $\Stone$ and, more interestingly, between $\var{A}$ and $\BA$.        

\section{A chain of adjuntions}\label{sec:adcuntions} In this section we explore the relationship between Stone duality and the semi-primal duality discussed in the previous section. We start with the connection between $\StoneL$ and $\Stone$, which will be expressed in terms of a chain of four adjoint functors (similar to one in \cite{Walker2004}). Then we look at the duals of these functors and give them purely algebraic descriptions to gain insight into the structure of $\var{A}$ relative to that of $\BA$. The entire situation is summarized in Figure \ref{fig:adjunctions}, which we will have fully described at the end of this section (note that left-adjoints on the topological side correspond to right-adjoints on the algebraic side and vice-versa, since the functors $\Pi_\alg{L},\Sigma_\alg{L}$ and $\Sigma, \Pi$ which establish the two dualities are contravariant).  

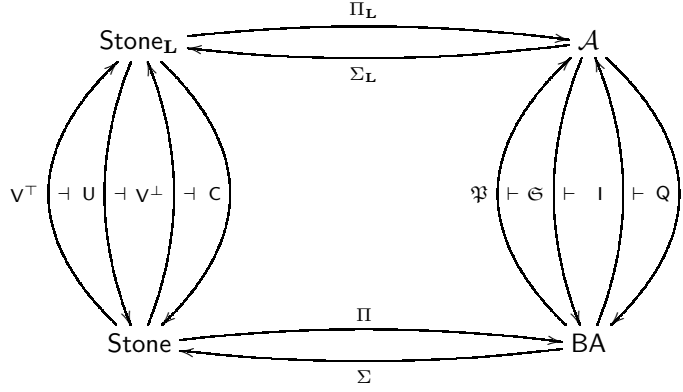
\begin{figure}[ht]
$$
\xymatrix@C=140pt@R=100pt{ 
\StoneL \ar@/^/[r]^{\Pi_\alg{L}}  
\ar@{<-}@/_34pt/[d]^\dashv_{\Vtop}
\ar@/_13pt/[d]_{\U}^{\dashv}
\ar@{<-}@/^13pt/[d]_{\Vbot}^{\dashv}
\ar@/^34pt/[d]_{\C}
& 
\var{A} \ar@/^/[l]^{\Sigma_\alg{L}}
\ar@{<-}@/_34pt/[d]^{\vdash}_{\M}
\ar@/_13pt/[d]_{\B}^{\vdash}
\ar@{<-}@/^13pt/[d]_{\I\phantom{a}}^{\vdash}
\ar@/^34pt/[d]_{\Q}  
\\
\Stone \ar@/^/[r]^{\Pi}  
& 
\BA \ar@/^/[l]^{\Sigma}  
}
$$ 
\caption{The chain of adjunctions on the topological and the algebraic side.}
\label{fig:adjunctions}
\end{figure}    

\subsection{Four functors on the topological side}\label{subs:fourfunctors}

Let $\U\colon \StoneL \to \Stone$ be the obvious forgetful functor. This functor has a left-adjoint and a right-adjoint $\Vtop \dashv \U \dashv \Vbot$. The two functors $\Vtop, \Vbot\colon\StoneL\rightarrow\Stone$ are given on objects by 
\begin{align*}
& V^\top(X) = (X, \val{v^\top}) \text{ where } \forall x\in X: \val{v^\top}(x) = \alg{L}, \\ 
& V^\bot(X) = (X, \val{v^\bot}) \text{ where } \forall x\in X: \val{v^\bot} (x) = \langle 0,1 \rangle
\end{align*} 
and both assign every morphism to itself. Here $\langle 0,1 \rangle$ is the subalgebra generated by $\{ 0,1 \}$, that is, the (unique) smallest subalgebra of $\alg{L}$.

To see $ V^\top \dashv \U$ note that by definition we have 
$$m \in \StoneL\big((X,\val{v^\top}), (Y,\val{w})\big) \Leftrightarrow m\in\Stone(X,Y) \wedge \forall x\in X: \val{w}(m(x)) \leq \val{v^\top} (x),$$
and $\val{w}(m(x)) \leq \val{v^\top}(x) = \alg{L}$ is trivially satisfied for every $m\in\Stone(X,Y)$. 

Similarly we see $\U \dashv \Vbot$, since every $m\in\Stone(X,Y)$ automatically satisfies $\val{v^\bot}(m(x)) \leq \val{w}(x)$ and, therefore, $m\in\StoneL\big((X,\val{w}),(Y,\val{v^\bot})\big)$. 

The functor $\Vbot$ also has a right-adjoint $\C: \StoneL \rightarrow \Stone $ defined by 
$$ \C(X,\val{v}) = \{ x\in X \mid \val{v}(x) = \langle 0,1 \rangle \}$$
on objects. On morphisms $m\colon (X,\val{v}) \to (Y,\val{w})$ it acts via restriction $m \mapsto m{\mid}_{\C(X,v)}$, which is well-defined since $m\in \StoneL\big((X,\val{v}), (Y,\val{w})\big)$ and $x\in \C(X,\val{v})$ means 
$$ \val{w}(m(x)) \leq \val{v}(x) = \langle 0,1 \rangle $$
which is equivalent to $m(x) \in \C(W,\val{w}).$
Again $\Vbot \dashv \C$ is easy to see since
$$m\in \StoneL\big((X,\val{v^\bot}), (Y,\val{w})\big) \Leftrightarrow \forall x: \val{w}(m(x)) \leq \langle 0,1 \rangle \Leftrightarrow m\in\Stone\big(X,\C (Y,\val{w})\big).$$

The functor $\Vtop$ preserves almost all limits, however, there is one important exception. The terminal object (that is, the limit of the empty diagram) in $\StoneL$ is given by $(\{ \ast \}, \val{v}^\bot)$, implying that $\Vtop$ does not preserve terminal objects. Therefore, contrary to a claim made in \cite{Walker2004}, no further left-adjoint of $\Vtop$ exists.  

It is obvious that both the unit $\id_{\Stone} \Rightarrow \U \circ \Vtop$ of the adjunction $\Vtop \dashv \U$ and the counit $\U \circ \Vbot \Rightarrow \id_{\Stone}$ of the adjunction $\U \dashv \Vbot$ are natural isomorphisms. We hold on to this fact, which will also be interesting on the algebraic side.   
\begin{proposition}\label{prop:Stonereflectivesubcategory} The category $\Stone$ is categorically equivalent to
\begin{enumerate}[(i)] 
\item a coreflective subcategory of $\StoneL$, witnessed by the fully faithful functor $\Vtop$. 
\item a reflective and coreflective subcategory of $\StoneL$, witnessed by the fully faithful functor $\Vbot$. 
\end{enumerate}
\end{proposition}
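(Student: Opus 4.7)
The plan is to leverage the chain of adjunctions $\Vtop \dashv \U \dashv \Vbot \dashv \C$ already established above, together with the standard criterion that in an adjunction $F \dashv G$ the left adjoint $F$ is fully faithful if and only if the unit is a natural isomorphism, and dually the right adjoint $G$ is fully faithful if and only if the counit is a natural isomorphism. Combining this with the remark already made in the text that the unit of $\Vtop \dashv \U$ and the counit of $\U \dashv \Vbot$ are natural isomorphisms delivers the fullness and faithfulness of both $\Vtop$ and $\Vbot$ with essentially no extra work.

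For part (i), I would first observe that $\U\Vtop(X) = X$ on objects and acts as the identity on morphisms, so the unit $\eta_X \colon X \to \U\Vtop(X)$ of $\Vtop \dashv \U$ is literally $\id_X$, hence a natural isomorphism. By the criterion above, $\Vtop$ is fully faithful, so it restricts to an equivalence between $\Stone$ and its essential image $\Vtop(\Stone)$, i.e.\ the full subcategory of $\StoneL$ on objects of the form $(X,\val{v^\top})$. Transferring the adjunction $\Vtop \dashv \U$ along this equivalence yields a right adjoint to the inclusion $\Vtop(\Stone) \hookrightarrow \StoneL$, so $\Vtop(\Stone)$ is a coreflective subcategory of $\StoneL$ equivalent to $\Stone$.

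For part (ii), I would argue analogously: the counit $\epsilon_X \colon \U\Vbot(X) \to X$ of $\U \dashv \Vbot$ is also the identity, so by the dual version of the same criterion $\Vbot$ is fully faithful. Passing to its essential image $\Vbot(\Stone) \subseteq \StoneL$ (the full subcategory on objects of the form $(X,\val{v^\bot})$) gives an equivalence with $\Stone$. The adjunction $\U \dashv \Vbot$ transfers to a left adjoint of the inclusion $\Vbot(\Stone) \hookrightarrow \StoneL$, so this subcategory is reflective; simultaneously, the adjunction $\Vbot \dashv \C$ transfers to a right adjoint of the same inclusion, so it is also coreflective.

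There is no real obstacle here: the statement is a formal consequence of the previously recorded adjunction chain together with the explicit description of the relevant unit and counit. The only mild point worth spelling out is that every fully faithful functor factors as an equivalence onto its essential image followed by a full inclusion, and that any adjoint the original functor possesses transfers along this equivalence to an adjoint of that inclusion, which is a routine check.
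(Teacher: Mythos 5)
Your proposal is correct and follows essentially the same route as the paper: the paper records that the unit of $\Vtop \dashv \U$ and the counit of $\U \dashv \Vbot$ are natural isomorphisms (both being identities), from which full faithfulness of $\Vtop$ and $\Vbot$ follows by the standard criterion, and the coreflectivity resp.\ reflectivity and coreflectivity of the essential images is then read off from the adjunction chain $\Vtop \dashv \U \dashv \Vbot \dashv \C$ exactly as you do. Your explicit remark about factoring a fully faithful functor through its essential image and transferring adjoints is just a careful spelling-out of what the paper leaves implicit.
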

The functors described in this subsection can be carried through the dualities, resulting in a a corresponding chain of adjunctions between $\var{A}$ and $\BA$. For example, the dual of $\U$ is given by $\Pi\U \Sigma_\alg{L}\colon \var{A}\to \BA$. In the next subsection we show that this functor can be understood algebraically as the Boolean skeleton. Throughout the subsections that follow, we will give similar algebraic descriptions for all of these functors between $\var{A}$ and $\BA$ in Figure \ref{fig:adjunctions}.   

\subsection{The Boolean skeleton functor} In the theory of $\cate{MV}_n$-algebras (that is, the case where $\alg{L} = \lucas_n$), the Boolean skeleton is a well-known and useful tool (see, for example, \cite{Cignoli2000}). An appropriate generalization of this concept to our setting was made by Maruyama in \cite{Maruyama2012} (where it is called the Boolean core).

Due to Proposition \ref{SP-char-Ts} and \cite[Lemma 3.11]{Maruyama2012}, the following definition is justified.    
\begin{definition}
Let $\alg{A}\in \var{A}.$ The \emph{Boolean skeleton} of $\alg{A}$ is the Boolean algebra 
$ \B(\alg{A}) = (\B(A), \wedge, \vee, T_0, 0, 1) $
on the carrier set 
$$ \B(A) = \{ a\in A \mid T_1(a) = a\},$$
where the lattice operations $\wedge$ and $\vee$ are inherited from $\alg{A}$ and the unary operations $T_0$ and $T_1$ correspond to the ones from Definition \ref{defin:Ts} (which by Proposition \ref{SP-char-Ts} are term-definable in $\alg{L}$), interpreted in $\alg{A}$.  
\end{definition}

For example, for each $\alg{A} \in \var{A}, a\in \alg{A}$ and $\ell \in \alg{L}$ we have $T_\ell(a) \in \B(\alg{A})$. This holds since the equation $T_1(T_\ell (x)) \approx T_\ell(x)$ holds in $\alg{L}$, and therefore also in $\alg{A}$. 

\begin{remark}
For $\alg{A} \in \var{A}$, suppose that $A' \subseteq A$ is a subset such that $(A', \wedge, \vee, T_0, 0, 1)$ forms a Boolean algebra. Then, for all $a' \in A'$, we have $T_1(a') = T_1(T_0(T_0(a'))) = T_0(T_0(a')) = a'$ and thus $a' \in \B (A)$ (the second equation always holds since $\var{A} \models T_1(T_0(x)) \approx T_0(x)$, which is easily checked in $\alg{L}$). Therefore, $\B(A)$ is the largest such subset. \hfill $\blacksquare$ 
\end{remark}

To extend the construction of the Boolean skeleton to a functor $\B\colon \var{A}\to \BA$, on homomorphisms $f\in \var{A}(\alg{A}_1,\alg{A}_2)$ we define $\B f$ to be the restriction $f{\mid}_{\B(\alg{A}_1)}$. This is well-defined since 
$$ a\in \B(\alg{A}_1) \Leftrightarrow T_1(a) = a \Rightarrow T_1(f(a)) = f(T_1(a)) = f(a) \Leftrightarrow f(a) \in \B(\alg{A}_2).$$
The following is arguably the most important property of the Boolean skeleton.

\begin{proposition}\label{prop:HomeoBooleanSkeleton}
For all $\alg{A}\in\var{A}$, there is a homeomorphism between $\U\Sigma_\alg{L}(\alg{A}) = \var{A}(\alg{A},\alg{L})$ and $\Sigma\B(\alg{A}) = \BA(\B(\alg{A}), \alg{2})$ given by $h\mapsto h{\mid}_{\B(\alg{A})}$.
\end{proposition}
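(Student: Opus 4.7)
The plan is to exhibit $\Phi\colon h \mapsto h{\mid}_{\B(\alg{A})}$ as a continuous bijection between compact Hausdorff spaces, which automatically makes it a homeomorphism. Throughout, I will exploit the fact that any equation satisfied by the unary terms $T_\ell$ and $\tau_\ell$ in $\alg{L}$ (which are term-definable by Proposition \ref{SP-char-Ts}) persists in every $\alg{A}\in\var{A}$.

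Well-definedness and injectivity are straightforward. For $h\in\var{A}(\alg{A},\alg{L})$ and $a\in\B(\alg{A})$, the computation $T_1(h(a)) = h(T_1(a)) = h(a)$ puts $h(a)$ in $\B(\alg{L}) = \{0,1\}$, so $\Phi(h)$ is a Boolean homomorphism $\B(\alg{A})\to\alg{2}$. For injectivity, observe that $T_\ell(a)\in\B(\alg{A})$ for every $a\in A$ and $\ell\in L$; if $h_1,h_2$ agree on $\B(\alg{A})$, then for any $a\in A$ the equivalences $h_i(a) = \ell \Leftrightarrow h_i(T_\ell(a)) = 1$ force $h_1(a) = h_2(a)$.

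The main obstacle is surjectivity. Given $\beta\in\BA(\B(\alg{A}),\alg{2})$, I need to reconstruct $h\colon\alg{A}\to\alg{L}$. The key is that in $\alg{L}$ the family $\{T_\ell\}_{\ell\in L}$ satisfies the (finitely many) identities $T_\ell(x)\wedge T_{\ell'}(x)\approx 0$ for $\ell\neq\ell'$ and $\bigvee_{\ell\in L}T_\ell(x)\approx 1$. Hence for each $a\in A$, the set $\{T_\ell(a) : \ell\in L\}$ is a partition of unity in $\B(\alg{A})$, and $\beta$ sends exactly one $T_\ell(a)$ to $1$; define $h(a)$ to be this $\ell$. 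To see that $h$ is a homomorphism, for an $n$-ary basic operation $f$ and $a_1,\dots,a_n\in A$ with $\ell_i = h(a_i)$, use the identity
\[ \bigwedge_{i=1}^n T_{\ell_i}(x_i) \;\approx\; \bigwedge_{i=1}^n T_{\ell_i}(x_i)\wedge T_{f(\ell_1,\dots,\ell_n)}(f(x_1,\dots,x_n)), \]
which clearly holds in $\alg{L}$: applying $\beta$ at $x_i = a_i$ gives $\beta(T_{f(\ell_1,\dots,\ell_n)}(f(a_1,\dots,a_n))) = 1$, so $h(f(a_1,\dots,a_n)) = f(h(a_1),\dots,h(a_n))$. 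To check $\Phi(h)=\beta$, use the identity $T_\ell(T_1(x))\approx 0$ for $\ell\notin\{0,1\}$, valid in $\alg{L}$: for $a\in\B(\alg{A})$ only $T_0(a) = \neg a$ and $T_1(a)=a$ can be non-zero, so the unique $\ell$ with $\beta(T_\ell(a))=1$ is $1$ if $\beta(a)=1$ and $0$ otherwise.

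For the topology, both spaces are compact Hausdorff as Stone duals (closed subspaces of $L^A$ and $2^{\B(\alg{A})}$ respectively). For $b\in\B(\alg{A})\subseteq A$ we have $\Phi^{-1}([b:1]) = [b:1]$, so $\Phi$ pulls subbasic clopens to subbasic clopens and is continuous. A continuous bijection between compact Hausdorff spaces is automatically a homeomorphism, completing the proof. (As a bonus, the identity $h(a)=\ell \Leftrightarrow h(T_\ell(a))=1$ shows $\Phi([a:\ell]) = [T_\ell(a):1]$, so $\Phi$ is visibly open as well.)
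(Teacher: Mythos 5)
Your proof is correct and follows the same skeleton as the paper's: the same map $h \mapsto h{\mid}_{\B(\alg{A})}$, injectivity via the elements $T_\ell(a) \in \B(\alg{A})$, surjectivity via the reconstruction $h(a) = \ell \Leftrightarrow \beta(T_\ell(a)) = 1$, and continuity on subbasic clopens plus compactness of Stone spaces. The one genuine difference is the surjectivity step: the paper outsources the fact that the reconstructed map is a well-defined homomorphism to \cite[Lemma 3.12]{Maruyama2012}, whereas you prove it from scratch by noting that $\{T_\ell(a)\}_{\ell\in L}$ is a partition of unity in $\B(\alg{A})$ (so $\beta$ selects exactly one $\ell$) and that the identity $\bigwedge_{i} T_{\ell_i}(x_i) \approx \bigwedge_{i} T_{\ell_i}(x_i) \wedge T_{f(\ell_1,\dots,\ell_n)}\big(f(x_1,\dots,x_n)\big)$, valid in $\alg{L}$ for each fixed tuple $(\ell_1,\dots,\ell_n)$, transfers to $\alg{A}$ and forces the reconstructed $h$ to commute with every basic operation $f$ (and, with the empty-meet convention, with the constants). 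This makes the argument self-contained where the paper relies on a citation, at the cost of a few extra lines. If you wanted this to stand alone, two small points deserve an explicit sentence: that $h{\mid}_{\B(\alg{A})}$ is a \emph{Boolean} homomorphism because the Boolean structure of $\B(\alg{A})$ consists of the term operations $\wedge,\vee,T_0,0,1$, all preserved by $h$; and that the sets $[b:1]$ with $b\in\B(\alg{A})$ really do generate the Stone topology on $\BA(\B(\alg{A}),\alg{2})$ (they do, since $[b:0]=[T_0(b):1]$), so checking preimages of these suffices for continuity.
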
  
\begin{proof}
First we show that the map is a bijection. For injectivity, suppose that $g$ and $h$ satisfy $g{\mid}_{\B(A)} = h{\mid}_{\B(A)}$. Take an arbitrary element $a\in \alg{A}$ and let 
$g(a) = \ell$. Using that $T_\ell(a) \in \B(\alg{A})$ we get 
$$1 = T_\ell (g(a)) = g(T_\ell(a)) = h(T_\ell(a)) = T_\ell(h(a)),$$
which implies $h(a) = \ell$ and, since $a$ was arbitrary, that $g = h$. 
For surjectivity, let $h \in \BA(\B(\alg{A}),\alg{2})$ be arbitrary. Due to \cite[Lemma 3.12]{Maruyama2012} the following yields a well-defined homomorphism $\bar{h}\in\var{A}(\alg{A},\alg{L})$:
$$ \bar{h}(a) = \ell \Leftrightarrow h(T_\ell(a)) = 1.$$
Since for $a\in\B(\alg{A})$ we have
\begin{align*}
h(T_1(a)) = 1 &\Leftrightarrow h(a) = 1 \text{ and } \\
h(T_0(a)) = 1 &\Leftrightarrow T_0(h(a)) = 1 \Leftrightarrow h(a) = 0, 
\end{align*}     
we conclude that $\bar{h}{\mid}_{\B(\alg{A})} = h$.

We now have a bijection between two Stone spaces, so it only remains to show that it is continuous. But this is easy to see since the preimage of an open subbasis element $[a:i] \subseteq \BA(\B(\alg{A}), \alg{2})$ is the open subbasis element $[a:i]\subseteq \var{A}(\alg{A},\alg{L})$. 
\end{proof} 
\begin{corollary}
There is a natural isomorphism between the functor $\B$ and the dual $\Pi\U \Sigma_\alg{L}$ of the forgetful functor $\U$.
\end{corollary}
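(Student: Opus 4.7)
The plan is to derive the desired natural isomorphism directly from Proposition \ref{prop:HomeoBooleanSkeleton} together with Stone duality. For each $\alg{A}\in\var{A}$, Proposition \ref{prop:HomeoBooleanSkeleton} supplies a canonical homeomorphism
\[
\rho_{\alg{A}}\colon \U\Sigma_\alg{L}(\alg{A}) \xrightarrow{\ \cong\ } \Sigma\B(\alg{A}), \qquad h \mapsto h{\mid}_{\B(\alg{A})}.
\]
Applying the contravariant Stone functor $\Pi$ to $\rho_{\alg{A}}$ yields an isomorphism $\Pi\Sigma\B(\alg{A}) \xrightarrow{\cong} \Pi\U\Sigma_\alg{L}(\alg{A})$ in $\BA$. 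Composing with the unit $\eta_{\B(\alg{A})}\colon \B(\alg{A}) \xrightarrow{\cong} \Pi\Sigma\B(\alg{A})$ of Stone duality produces a candidate isomorphism
\[
\alpha_{\alg{A}} := \Pi(\rho_{\alg{A}}) \circ \eta_{\B(\alg{A})} \colon \B(\alg{A}) \xrightarrow{\ \cong\ } \Pi\U\Sigma_\alg{L}(\alg{A}).
\]

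What remains is to verify that the family $(\alpha_{\alg{A}})_{\alg{A}\in\var{A}}$ is natural in $\alg{A}$. Since both $\Pi$ and the unit $\eta$ of Stone duality are already natural, it suffices to check naturality of $\rho$ itself. For a morphism $f\colon \alg{A}_1 \to \alg{A}_2$ in $\var{A}$ and any $h\in\var{A}(\alg{A}_2,\alg{L})$, one has on the one hand
\[
\bigl(\Sigma\B f \circ \rho_{\alg{A}_2}\bigr)(h) = (h{\mid}_{\B(\alg{A}_2)}) \circ (f{\mid}_{\B(\alg{A}_1)}),
\]
while on the other hand
\[
\bigl(\rho_{\alg{A}_1} \circ \U\Sigma_\alg{L} f\bigr)(h) = (h\circ f){\mid}_{\B(\alg{A}_1)}.
\]
These two composites agree, since for $a\in\B(\alg{A}_1)$ we have $f(a)\in\B(\alg{A}_2)$ (as verified in the definition of $\B f$) and the equation $(h\circ f)(a) = h(f(a))$ is immediate. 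Thus $\rho$ is a natural isomorphism, and hence so is $\alpha$.

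I expect no real obstacles: the hard content is already packaged in Proposition \ref{prop:HomeoBooleanSkeleton} and in Stone duality. The only point requiring care is ensuring that all three ingredients ($\rho$, $\eta$, and the action of $\Pi$ on morphisms) compose in the right variance so that the resulting natural isomorphism indeed goes from $\B$ to $\Pi\U\Sigma_\alg{L}$ rather than in the opposite direction; this is just a matter of bookkeeping given that $\Pi$ and $\Sigma_\alg{L}$ are contravariant while $\U$ and $\B$ are covariant.
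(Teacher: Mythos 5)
Your proposal is correct and follows essentially the same route as the paper: both transport the homeomorphism $h \mapsto h{\mid}_{\B(\alg{A})}$ from Proposition \ref{prop:HomeoBooleanSkeleton} through $\Pi$ and the unit of Stone duality ($\Pi\Sigma \cong \id_{\BA}$) to obtain the natural isomorphism $\B \Rightarrow \Pi\U\Sigma_\alg{L}$. The only difference is that you spell out the naturality check that the paper leaves as ``easy to check,'' and your computation of it is correct.
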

\begin{proof}
By Proposition \ref{prop:HomeoBooleanSkeleton}, for every $\alg{A} \in \var{A}$, setting  
\begin{align*}
\phi_\alg{A}\colon \U \Sigma_\alg{L}(\alg{A}) & \to \Sigma\B (\alg{A}) \\
h &\mapsto h{\mid}_{\B(\alg{A})}
\end{align*}
defines a natural isomorphism $\phi\colon \U \Sigma_\alg{L} \Rightarrow \Sigma \B$ (naturality is easy to check using the definitions of $\Sigma, \Sigma_\alg{L}$ and $\B$ on morphisms). Applying $\Pi$ and using the fact that $\Pi\Sigma$ is naturally isomorphic to $ \id_\BA$, we get the natural isomorphism $\Pi\phi\colon \B \Rightarrow \Pi\U \Sigma_\alg{L}.$  
\end{proof}

In the next subsection we explain the right-adjoint of the Boolean skeleton functor.  
\subsection{The Boolean power functor}\label{subs:Boolpower} 
In this subsection we give an algebraic description of a functor naturally isomorphic to the dual $\Pi_\alg{L}\Vtop \Sigma$ of the functor $\Vtop$. This functor, which we call $\M$, turns out to be an instance of the the well-known \emph{Boolean power}  (or \emph{Boolean extension}), which was introduced for arbitrary finite algebras in Foster's first paper on primal algebras \cite{Foster1953a}. Boolean powers are special instances of Boolean products (see, \emph{e.g.}, \cite[Chapter IV]{BurrisSankappanavar1981}), but for our purposes it is more convenient to work with the following equivalent definition found in \cite{Burris1975}.   

\begin{definition}\label{defin:Boolpowers}
Given a Boolean algebra $\alg{B} \in \BA$ and a finite algebra $\alg{M}$, the \emph{Boolean power} $\alg{M}[\alg{B}]$ is defined on the carrier set 
$$ M[B] \subseteq B^M $$
consisting of all maps $\xi \colon M \to B$ which satisfy 
\begin{enumerate}
\item If $\ell$ and $\ell'$ are distinct elements of $M$, then $\xi(\ell) \wedge \xi(\ell') = 0$,
\item $\bigvee \{ \xi(\ell) \mid \ell\in M \} = 1$.  
\end{enumerate}
If $o^{\alg{L}}\colon M^k \to M$ is a $k$-ary operation of $\alg{M}$, we define a corresponding operation $o^{\alg{M}[\alg{B}]}\colon M[B] \to M[B]$ by 
$$ o^{\alg{M}[\alg{B}]}(\xi_1, \dots, \xi_k) (\ell) = \bigvee_{o^{\alg{M}}(\ell_1, \dots, \ell_k) = \ell} ( \xi_1(\ell_1) \wedge \dots \wedge \xi_k(\ell_k)).$$
The resulting algebra $\alg{M}[\alg{B}] = (M[B], \{ o^{\alg{M}[\alg{B}]} \mid o \text{ in the signature of } \alg{M} \})$ is a member of the variety $\variety{\alg{M}}$ generated by $\alg{M}$ (since it satisfies the same equations as $\alg{M}$).   
\end{definition}

There is a straightforward way to extend this construction to a functor.  

\begin{definition}\label{def:Boolpowerfunctor}
Given a finite algebra $\alg{M}$, we define the functor $\M_\alg{M}\colon \BA \to \variety{\alg{M}}$ as follows. On objects $\alg{B} \in \BA$ we define 
$$ \M_\alg{M}(\alg{B}) = \alg{M}[\alg{B}].$$
For a Boolean homomorphism $\varphi\colon \alg{B} \to \alg{B}'$, the homomorphism $\M_\alg{M}\varphi\colon \alg{M}[\alg{B}]\to \alg{M}[\alg{B}']$ is defined via composition $\xi \mapsto \varphi \circ \xi$ (this is a homomorphism because operations in $\alg{M}[\alg{B}]$ are defined by Boolean expressions, which commute with $\varphi$).
\end{definition}
        
In particular, we will use the shorthand notation $\M$ for $\M_{\alg{L}}$. In the remainder of this subsection we aim to show that $\M$ is indeed the right-adjoint of the Boolean skeleton functor $\B$. For this, we need the following well-known properties of the Boolean power. 
\begin{lemma}{\cite[Proposition 2.1]{Burris1975}}\label{lem:propertiesofM} The functor $\M_\alg{M}$ has the following properties:
\begin{enumerate}[(i)]
\item $\M_\alg{M}(\alg{2})\cong\alg{M}$,
\item $\M_\alg{M}$ preserves products. 
\end{enumerate}
In particular, $\M_\alg{M}(\alg{2}^\kappa) \cong \alg{M}^\kappa$ holds for all index sets $\kappa$. 
\end{lemma}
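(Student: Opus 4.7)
The plan is to unpack the definition of $\M_\alg{M}$ and verify the two properties by direct, componentwise computation; the final statement is then a one-line consequence.

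For (i), I would first describe the underlying set of $\M_\alg{M}(\alg{2}) = \alg{M}[\alg{2}]$. By the two defining conditions in Definition \ref{defin:Boolpowers}, a map $\xi\colon M\to \{0,1\}$ lies in $M[2]$ if and only if it takes the value $1$ on \emph{exactly one} element $\ell_\xi \in M$. This gives a set-theoretic bijection $\Psi\colon \alg{M}[\alg{2}]\to M$, $\xi\mapsto \ell_\xi$, whose inverse sends $\ell$ to the characteristic function $\chi_{\{\ell\}}$. To see that $\Psi$ is an isomorphism of algebras, fix a $k$-ary operation $o$ of $\alg{M}$ and $\xi_1,\dots,\xi_k\in M[2]$ with $\ell_i = \Psi(\xi_i)$. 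In the join
$$o^{\alg{M}[\alg{2}]}(\xi_1,\dots,\xi_k)(\ell')\;=\;\bigvee_{o^{\alg{M}}(\ell'_1,\dots,\ell'_k)=\ell'}\bigl(\xi_1(\ell'_1)\wedge\dots\wedge\xi_k(\ell'_k)\bigr),$$
the only potentially nonzero summand is the one with $\ell'_i=\ell_i$ for all $i$, and that summand equals $1$ precisely when $\ell'=o^{\alg{M}}(\ell_1,\dots,\ell_k)$. Hence $\Psi\bigl(o^{\alg{M}[\alg{2}]}(\xi_1,\dots,\xi_k)\bigr)=o^{\alg{M}}(\ell_1,\dots,\ell_k)$, which is exactly the condition for $\Psi$ to be a homomorphism.

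For (ii), consider a family $(\alg{B}_j)_{j\in J}$ in $\BA$ with product $\alg{B}=\prod_{j}\alg{B}_j$ and projections $\pi_j\colon \alg{B}\to \alg{B}_j$. I would define
$$\Phi\colon \M_\alg{M}(\alg{B})\;\longrightarrow\;\prod_{j\in J}\M_\alg{M}(\alg{B}_j),\qquad \xi\;\mapsto\;(\pi_j\circ\xi)_{j\in J},$$
and verify three things. First, each component $\pi_j\circ\xi$ lies in $M[B_j]$, since the meets and joins in conditions (1)–(2) of Definition \ref{defin:Boolpowers} are computed componentwise in $\alg{B}$, so applying $\pi_j$ preserves them. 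Second, $\Phi$ is a bijection: its inverse takes a family $(\xi_j)_{j\in J}$ to the map $\ell\mapsto (\xi_j(\ell))_{j\in J}$, and again the defining conditions hold componentwise. Third, $\Phi$ is a homomorphism: this is immediate because the operations in $\M_\alg{M}(-)$ are given by Boolean polynomials in the values $\xi_i(\ell'_i)$, and these polynomials commute with the projections $\pi_j$ (equivalently, with $\Phi$). Functoriality of $\M_\alg{M}$ from Definition \ref{def:Boolpowerfunctor} ensures that the components of $\Phi$ are the images of the projections, so $\Phi$ is the canonical comparison map, and it is an isomorphism.

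The ``in particular'' clause is then immediate: combining (ii) applied to the constant family with (i) gives
$$\M_\alg{M}(\alg{2}^\kappa)\;\cong\;\bigl(\M_\alg{M}(\alg{2})\bigr)^\kappa\;\cong\;\alg{M}^\kappa.$$
The main obstacle, if any, is purely bookkeeping: correctly handling the large disjunction appearing in the operation formula of $\M_\alg{M}$ when checking that $\Psi$ and $\Phi$ preserve operations. In both parts the key point is the same observation, namely that the algebraic structure on a Boolean power is transported through any \emph{Boolean} homomorphism coordinatewise, which is why projections (part (ii)) and evaluation at a single point in $\alg{2}$ (part (i)) both yield the desired identifications.
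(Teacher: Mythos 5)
Your verification is correct, but note that the paper itself gives no proof of this lemma at all: it is quoted verbatim from Burris's Proposition 2.1, so there is no in-paper argument to compare against. Your componentwise check is the standard one and is sound: in part (i) the two defining conditions of the Boolean power over $\alg{2}$ force $\xi$ to be the characteristic function of a single element, and your computation of the operation formula correctly isolates the unique nonzero disjunct; in part (ii) the comparison map $\xi\mapsto(\pi_j\circ\xi)_j$ is indeed the canonical one (its components are exactly $\M_\alg{M}(\pi_j)$ as defined on morphisms), and bijectivity plus the fact that the operations are Boolean polynomials give the isomorphism. Two small points worth making explicit if you write this up: the join in condition (2) is a \emph{finite} join because $\alg{M}$ is a finite algebra, which is what guarantees that the Boolean homomorphisms $\pi_j$ preserve it (they need not preserve infinite joins); and the empty product (terminal object) should be handled by the separate observation that $\M_\alg{M}$ of the one-element Boolean algebra is a singleton, since the ``components agree'' injectivity argument is vacuous there. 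Neither affects the uses made of the lemma in the paper, where only products of copies of $\alg{2}$ over a (possibly infinite) index set appear.
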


In the next proposition we describe the interplay between the functors $\B$ and $\M$. Again, the terms $T_\ell$ from Proposition \ref{SP-char-Ts} play an important role.

\begin{proposition}\label{prop:MandB}
For every $\alg{A} \in \var{A}$ there is an embedding $\mathcal{T}_{(\cdot)}\colon \alg{A} \hookrightarrow \M(\B(\alg{A}))$ given by $a \mapsto \mathcal{T}_a$ where 
$$\mathcal{T}_a(\ell) = T_\ell(a).$$
The restriction to $\B(\alg{A})$ yields an isomorphism $\B(\alg{A}) \cong \B\big(\M(\B(\alg{A}))\big)$. 
\end{proposition}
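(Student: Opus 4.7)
The plan is to show, in this order, (i) that $\mathcal{T}_a$ really lies in the Boolean power $\alg{L}[\B(\alg{A})] = \M(\B(\alg{A}))$; (ii) that $\mathcal{T}_{(\cdot)}$ is a homomorphism; (iii) that it is injective; and (iv) that its restriction to $\B(\alg{A})$ is the claimed Boolean algebra isomorphism. For well-definedness, I would check the three conditions of Definition \ref{defin:Boolpowers}: each $T_\ell(a)$ lies in $\B(\alg{A})$ because $T_1(T_\ell(x)) \approx T_\ell(x)$ holds in $\alg{L}$ (both sides take values in $\{0,1\}$); the pairwise disjointness $T_\ell(a) \wedge T_{\ell'}(a) = 0$ for $\ell \neq \ell'$ and the covering identity $\bigvee_{\ell \in L} T_\ell(a) = 1$ are equations verified in $\alg{L}$ by direct inspection. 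All three lift from $\alg{L}$ to $\alg{A}$ since $\var{A} = \variety{\alg{L}}$.

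For the homomorphism property, the crucial step is the equation
\[
T_\ell\bigl(o(x_1, \dots, x_k)\bigr) \approx \bigvee_{o^{\alg{L}}(\ell_1, \dots, \ell_k) = \ell} \bigl(T_{\ell_1}(x_1) \wedge \dots \wedge T_{\ell_k}(x_k)\bigr),
\]
where $o$ is any operation of $\alg{L}$. This holds in $\alg{L}$ because, on inputs $(\ell_1', \dots, \ell_k') \in L^k$, exactly one summand on the right is nonzero (namely the one indexed by $(\ell_1', \dots, \ell_k')$ itself, provided $o^{\alg{L}}(\ell_1', \dots, \ell_k') = \ell$), and both sides then agree. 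Hence the equation holds in $\alg{A}$, and comparing it with the definition of $o^{\M(\B(\alg{A}))}$ gives $\mathcal{T}_{o(a_1, \dots, a_k)} = o^{\M(\B(\alg{A}))}(\mathcal{T}_{a_1}, \dots, \mathcal{T}_{a_k})$ pointwise in $\ell$.

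For injectivity, suppose $\mathcal{T}_a = \mathcal{T}_b$, i.e., $T_\ell(a) = T_\ell(b)$ for all $\ell \in L$. Because $\var{A} = \qvariety{\alg{L}}$, the algebra $\alg{A}$ embeds into a power of $\alg{L}$ via the evaluation family $\var{A}(\alg{A}, \alg{L})$, so it suffices to show $h(a) = h(b)$ for every $h \in \var{A}(\alg{A}, \alg{L})$. If $h(a) = \ell$, then $h(T_\ell(a)) = T_\ell(h(a)) = 1$ since $T_\ell$ is term-definable in $\alg{L}$ by Proposition \ref{SP-char-Ts}; substituting $T_\ell(b)$ and applying the same identity forces $h(b) = \ell = h(a)$, as required.

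For the final claim, I would explicitly compute $T_1$ in $\M(\B(\alg{A}))$ using Definition \ref{defin:Boolpowers}:
\[
T_1^{\M(\B(\alg{A}))}(\xi)(\ell) = \bigvee_{T_1^{\alg{L}}(\ell') = \ell} \xi(\ell'),
\]
which evaluates to $\xi(1)$ for $\ell = 1$, to $\bigvee_{\ell' \neq 1} \xi(\ell')$ for $\ell = 0$, and to $0$ otherwise. Combined with the coverage and disjointness conditions, the equation $T_1(\xi) = \xi$ reduces to the single requirement that $\xi(\ell) = 0$ for all $\ell \in L \setminus \{0,1\}$. Such $\xi$ are manifestly in bijection with $\B(\alg{A})$ via $\xi \mapsto \xi(1)$, and I would check that the inverse of this bijection is the restriction $b \mapsto \mathcal{T}_b$: for $b \in \B(\alg{A})$, the $\alg{L}$-equation $T_\ell(T_1(x)) \approx 0$ (for $\ell \notin \{0,1\}$) yields $T_\ell(b) = T_\ell(T_1(b)) = 0$, so $\mathcal{T}_b$ indeed satisfies the above characterization. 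Since $\mathcal{T}_{(\cdot)}$ is already a homomorphism preserving $\wedge, \vee, T_0, 0, 1$, its restriction is the desired Boolean algebra isomorphism. I expect the main obstacle to be the homomorphism check in the second step; the rest is essentially an unpacking of Definition \ref{defin:Boolpowers} and exploitation of $\var{A} = \variety{\alg{L}}$.
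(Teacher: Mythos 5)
Your proposal is correct and follows essentially the same route as the paper: the same identities among the terms $T_\ell$ give well-definedness and the homomorphism property, and the same computation of $T_1$ inside the Boolean power identifies $\B\big(\M(\B(\alg{A}))\big)$ with $\{\mathcal{T}_b \mid b\in\B(\alg{A})\}$. The only cosmetic difference is that you verify these identities equationally in $\alg{L}$ and transfer them via $\var{A}=\variety{\alg{L}}$ (using separating homomorphisms into $\alg{L}$ for injectivity), whereas the paper fixes an embedding $\alg{A}\hookrightarrow\alg{L}^I$ and checks everything componentwise.
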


\begin{proof}
The map is well-defined, that is, $\mathcal{T}_a$ is in $\M(\B(\alg{A}))$, since the equations $T_\ell(x) \wedge T_{\ell'}(x) \approx 0$ (for distinct $\ell, \ell'$) and $\bigvee \{ T_\ell(x) \mid \ell \in L\} \approx 1$ hold in $\alg{L}$.

We now fix an embedding $\alg{A} \hookrightarrow \alg{L}^I$. It is easy to see that $\mathcal{T}_{(\cdot)}$ is injective since, for distinct $a, a' \in \alg{A}$, there is some component $i\in I$ with $a(i) = \ell \neq a'(i)$, thus $\mathcal{T}_a(\ell) \neq \mathcal{T}_{a'}(\ell)$. To conclude that $\mathcal{T}_{(\cdot)}$ is an embedding we need to show that it is a homomorphism, that is we want to show that for any $k$-ary operation $o\colon L^k \to L$ of $\alg{L}$ we have 
$$ \mathcal{T}_{o^\alg{A}(a_1, \dots, a_k)} = o^{\alg{L}[\mathfrak{S}(\alg{A})]}(\mathcal{T}_{a_1}, \dots \mathcal{T}_{a_k}).$$
By definition the $i$-th component of the left-hand side is given by 
$$\mathcal{T}_{o^{A}(a_1, \dots, a_k)}(\ell)(i) = T_\ell \big(o^\alg{L}(a_1(i), \dots, a_k(i))\big) = \begin{cases}
1 & \text{ if } o^\alg{L}(a_1(i),\dots, a_k(i)) = \ell \\
0 & \text{ otherwise.}
\end{cases}$$
The right-hand side is given by
$$ o^{\alg{L}[\mathfrak{S}(\alg{A})]}(\mathcal{T}_{a_1}, \dots \mathcal{T}_{a_k})(\ell) = \bigvee_{o^{\alg{L}}(\ell_1, \dots, \ell_k) = \ell} ( \mathcal{T}_{a_1}(\ell_1) \wedge \dots \wedge \mathcal{T}_{a_k}(\ell_k) ). $$        
In its $i$-th component this again corresponds to
$$\bigvee_{o^{\alg{L}}(\ell_1, \dots, \ell_k) = \ell} \big(T_{\ell_1}(a_1(i)) \wedge \dots \wedge T_{\ell_k}(a_k(i))\big) = \begin{cases}
1 & \text{ if } o^\alg{L}(a_1(i),\dots, a_k(i)) = \ell \\
0 & \text{ otherwise.}
\end{cases}$$
Thus $\mathcal{T}_{(\cdot)}$ is an embedding, which concludes the proof of the first statement.

For the second statement, note that, since $\B$ preserves injectivity of homomorphisms, it suffices to show that the restriction of $\mathcal{T}_{(\cdot)}$ to $\B(\alg{A})$ is a surjection onto $\B\big(\M(\B(\alg{A}))\big)$. So consider an element $\xi \in \B\big(\M(\B(\alg{A}))\big)$, that is $\xi \in \M(\B(\alg{A}))$ and $T_1^{\alg{L}[\alg{\B(\alg{A})}]}(\xi) = \xi.$ The latter by definition means  
$$ T_1^{\alg{L}[\alg{\B(\alg{A})}]}(\xi)(1) = \xi(1),$$
$$ T_1^{\alg{L}[\alg{\B(\alg{A})}]}(\xi)(0) = \bigvee\{ \xi(\ell) \mid \ell \in L, \ell \neq 1 \} = \xi(0) \text{ and } $$
$$ T_1^{\alg{L}[\alg{\B(\alg{A})}]}(\xi)(\ell) = \bigvee\varnothing = 0 = \xi(\ell) \text{ for all } \ell\in L{\setminus}\{ 0,1 \}.$$
We claim that $\xi = \mathcal{T}_{\xi(1)}.$ Indeed, we know that $\xi(1) \in \B(\alg{A})$ so $\xi(1) = T_1(\xi(1))$. 
Furthermore, in the component $i \in I$, we have $\xi(0)(i) = 1$ if and only if $\xi(1)(i) = 0$, so $T_0(\xi(1)) = T_1(\xi(0)) = \xi(0)$ since $\xi(0)\in \B(\alg{A})$. Finally, for $\ell \not\in \{ 0,1 \}$ we have $T_\ell(\xi(1)) = 0$ since for all $i\in I$ we have $\xi(1)(i) \in \{ 0,1 \}$. This concludes the proof.   
\end{proof}

Since $\B$ is dual to the essentially surjective functor $\U$, we know that every $\alg{B}\in \BA$ is isomorphic to $\B(\alg{A})$ for some $\alg{A} \in \var{A}$. Therefore, the following is a direct consequence of the second part of Proposition \ref{prop:MandB}. 

\begin{corollary}\label{cor:BisoBMB}
Every Boolean algebra $\alg{B} \in \BA$ is isomorphic to $\B(\M(\alg{B}))$.  
\end{corollary}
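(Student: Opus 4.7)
The plan is to combine two ingredients: essential surjectivity of the Boolean skeleton functor $\B$, and the iso\-mor\-phism $\B(\alg{A}) \cong \B(\M(\B(\alg{A})))$ already established as the second part of Proposition \ref{prop:MandB}. The essential surjectivity is effectively handed to us by the framework of Subsection \ref{subs:fourfunctors}: from the natural isomorphism $\B \cong \Pi \circ \U \circ \Sigma_\alg{L}$, and the fact that $\Pi$ and $\Sigma_\alg{L}$ are the object parts of (contravariant) equivalences, essential surjectivity of $\B$ reduces to that of $\U\colon \StoneL \to \Stone$. But $\U$ is evidently essentially surjective, since for any Stone space $X$ one has $\U\Vtop(X) = X$ (the unit $\id_{\Stone} \Rightarrow \U \circ \Vtop$ was noted to be a natural isomorphism).

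Given this, the proof proceeds in three short steps. First, for an arbitrary $\alg{B} \in \BA$, use essential surjectivity of $\B$ to pick $\alg{A} \in \var{A}$ together with an isomorphism $\varphi\colon \alg{B} \xrightarrow{\sim} \B(\alg{A})$. Second, invoke Proposition \ref{prop:MandB} for this $\alg{A}$ to obtain an isomorphism $\B(\alg{A}) \xrightarrow{\sim} \B(\M(\B(\alg{A})))$. Third, apply the composite functor $\B \circ \M$ to $\varphi^{-1}\colon \B(\alg{A}) \xrightarrow{\sim} \alg{B}$ to obtain an isomorphism $\B(\M(\B(\alg{A}))) \xrightarrow{\sim} \B(\M(\alg{B}))$. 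Composing these three isomorphisms yields the desired $\alg{B} \cong \B(\M(\alg{B}))$.

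There is no real obstacle here, as all the substantial work was done inside Proposition \ref{prop:MandB}; the corollary is a pure bookkeeping consequence of that proposition together with essential surjectivity of $\B$. If one wanted a more explicit description (which is not demanded by the statement), one could trace through the constructions to check that the unit of the prospective adjunction $\B \dashv \M$ from Theorem \ref{thm:MadjointB} specializes on $\alg{B} = \B(\alg{A})$ to the restriction of $\mathcal{T}_{(\cdot)}$, giving a canonical choice of isomorphism; but for the bare isomorphism claimed in Corollary \ref{cor:BisoBMB} the three-step composition above suffices.
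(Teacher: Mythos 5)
Your proposal is correct and follows essentially the same route as the paper: essential surjectivity of $\B$ (as the dual of the essentially surjective $\U$) gives $\alg{B}\cong\B(\alg{A})$ for some $\alg{A}\in\var{A}$, and then the second part of Proposition \ref{prop:MandB} together with functoriality of $\B\circ\M$ yields the claim. The only difference is that you spell out the bookkeeping (applying $\B\M$ to the isomorphism and composing) which the paper leaves implicit.
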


Another immediate consequence of Proposition \ref{prop:MandB} is the following. 
 
\begin{corollary}\label{cor:embedAinMBA}
For every Boolean algebra $\alg{B} \in \BA$, the algebra $\M(\alg{B})$ is the largest algebra in $\var{A}$ which has $\alg{B}$ as Boolean skeleton. That is, for every algebra $\alg{A} \in \var{A}$ with $\B(\alg{A}) \cong \alg{B}$ there exists an embedding $\alg{A} \hookrightarrow \M(\alg{B})$.   
\end{corollary}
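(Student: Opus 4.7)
The plan is to extract this directly from the first half of Proposition \ref{prop:MandB}, exploiting the functoriality of $\M$. Given any $\alg{A}\in\var{A}$ equipped with a Boolean isomorphism $\varphi\colon \B(\alg{A})\to \alg{B}$, the functor $\M$ (Definition \ref{def:Boolpowerfunctor}) sends $\varphi$ to an isomorphism $\M\varphi\colon \M(\B(\alg{A}))\to\M(\alg{B})$ in $\var{A}$. Composing with the embedding $\mathcal{T}_{(\cdot)}\colon \alg{A}\hookrightarrow \M(\B(\alg{A}))$ provided by Proposition \ref{prop:MandB} yields the desired embedding $\M\varphi\circ\mathcal{T}_{(\cdot)}\colon \alg{A}\hookrightarrow \M(\alg{B})$.

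To make the word \emph{largest} meaningful, I would also observe that $\M(\alg{B})$ itself belongs to the collection under consideration: by Corollary \ref{cor:BisoBMB} we have $\B(\M(\alg{B}))\cong \alg{B}$, so $\M(\alg{B})$ is indeed an algebra in $\var{A}$ with Boolean skeleton $\alg{B}$, and the statement then asserts that every other such algebra sits inside it as a subalgebra.

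There is no substantive obstacle. The real work was already carried out in Proposition \ref{prop:MandB}, where the embedding $a\mapsto \mathcal{T}_a$ was constructed using the unary terms $T_\ell$ from Definition \ref{defin:Ts}. The present corollary is simply the consequence obtained by transporting that embedding along an arbitrary isomorphism $\B(\alg{A})\cong \alg{B}$ via the functor $\M$, and so amounts to a one-line proof.
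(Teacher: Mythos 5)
Your proposal is correct and matches the paper's intent: the paper states this corollary as an immediate consequence of Proposition \ref{prop:MandB}, and your argument simply makes explicit the routine step of transporting the embedding $\mathcal{T}_{(\cdot)}\colon \alg{A}\hookrightarrow \M(\B(\alg{A}))$ along the isomorphism $\M\varphi\colon \M(\B(\alg{A}))\to\M(\alg{B})$, with Corollary \ref{cor:BisoBMB} justifying that $\M(\alg{B})$ itself has Boolean skeleton $\alg{B}$.
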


We now have everything at hand to prove the main theorem of this subsection.

\begin{theorem}\label{thm:MadjointB}
$\M$ is naturally isomorphic to the dual of $\Vtop$ and, therefore,  
$$ \B \dashv \M.$$
\end{theorem}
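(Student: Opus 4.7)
The plan is to directly exhibit a natural isomorphism $\M \cong \Pi_\alg{L}\Vtop\Sigma$; once this is established, the adjunction $\B \dashv \M$ follows by dualizing the adjunction $\Vtop \dashv \U$ from Subsection~\ref{subs:fourfunctors}, since the previous corollary already identifies $\B$ with the dual $\Pi\U\Sigma_\alg{L}$ of $\U$.

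First I would unpack $\Pi_\alg{L}\Vtop\Sigma(\alg{B})$ for $\alg{B}\in\BA$: because $\val{v}^\top$ is constantly equal to $\alg{L}$, the constraint $g(x) \in \val{v}^\top(x)$ is vacuous, so this algebra consists of all continuous maps $g\colon \Sigma(\alg{B}) \to L$ (with $L$ discrete) under pointwise operations. Equivalently, these are the locally constant maps, each determined by a finite clopen partition of $\Sigma(\alg{B})$ labelled by elements of $L$.

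The candidate natural transformation $\eta_\alg{B}\colon \M(\alg{B}) \to \Pi_\alg{L}\Vtop\Sigma(\alg{B})$ sends $\xi$ to the map $\hat{\xi}$ defined by $\hat{\xi}(x) = \ell \Leftrightarrow \xi(\ell) \in x$. The Boolean power conditions force $\{\xi(\ell) \mid \ell \in L\}$ to be a finite partition of unity in $\alg{B}$, so every ultrafilter $x$ contains exactly one $\xi(\ell)$ and $\hat{\xi}$ is well-defined; moreover $\hat{\xi}^{-1}(\{\ell\})$ is the basic clopen of $\Sigma(\alg{B})$ corresponding to $\xi(\ell)$, so $\hat{\xi}$ is continuous. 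The inverse sends a continuous $g$ to the function $\ell \mapsto b_\ell$, where $b_\ell \in B$ is the element corresponding (by Stone duality) to the clopen $g^{-1}(\{\ell\})$; the fact that $\{g^{-1}(\{\ell\}) \mid \ell \in L\}$ partitions $\Sigma(\alg{B})$ into finitely many clopens forces this map to lie in $\M(\alg{B})$.

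It then remains to verify that $\eta_\alg{B}$ is a homomorphism and is natural in $\alg{B}$. The homomorphism check reduces, for a $k$-ary operation $o$ of $\alg{L}$, to combining the defining formula for $o^{\M(\alg{B})}$ (a finite join of finite meets of the $\xi_i(\ell_i)$) with the fact that ultrafilters preserve finite meets and finite joins, giving $\widehat{o^{\M(\alg{B})}(\xi_1, \dots, \xi_k)}(x) = o\big(\hat{\xi_1}(x), \dots, \hat{\xi_k}(x)\big)$. Naturality in $\alg{B}$ for a Boolean homomorphism $\varphi\colon \alg{B} \to \alg{B}'$ follows by a direct unwinding: post-composition by $\varphi$ (which is the action of $\M\varphi$ by Definition~\ref{def:Boolpowerfunctor}) corresponds under $\eta$ to pre-composition by $\Sigma\varphi$, using that $\varphi(b) \in x'$ iff $b \in \Sigma\varphi(x')$. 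I expect no conceptual obstacle here: the underlying content is simply the classical identification of Boolean powers with locally constant functions on the dual Stone space, and the main effort lies in the finite-partition bookkeeping within the homomorphism check.
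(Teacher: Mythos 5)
Your proposal is correct, but it takes a genuinely different route from the paper. You construct the natural isomorphism $\M \cong \Pi_\alg{L}\Vtop\Sigma$ directly, via the classical identification of the Boolean power $\alg{L}[\alg{B}]$ with the algebra of continuous (equivalently, locally constant) maps $\Sigma(\alg{B}) \to L$: the partition-of-unity conditions on $\xi$ make $\hat{\xi}$ well-defined and continuous, primeness of ultrafilters gives the homomorphism property for the operations $o^{\alg{L}[\alg{B}]}$, and $\varphi(b)\in x' \Leftrightarrow b \in \Sigma\varphi(x')$ gives naturality; the adjunction $\B\dashv\M$ then falls out by dualizing $\Vtop\dashv\U$, exactly as you say. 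The paper instead argues in two stages: it first establishes the isomorphism $\Sigma_\alg{L}\M(\alg{B}) \cong \Vtop\Sigma(\alg{B})$ only for \emph{finite} $\alg{B}$ (using Proposition \ref{prop:homsareprojections}, Proposition \ref{prop:HomeoBooleanSkeleton} and Corollary \ref{cor:BisoBMB}), and then invokes the $\Ind$-completion machinery: since $\BA \simeq \Ind(\BA^\omega)$, the finite-level dual $(\Vtop)^\omega$ has an essentially unique finitary extension, and the proof is completed by checking that $\M$ is finitary (preserves filtered colimits), via the criterion that every finite subalgebra of $\M(\alg{B})$ factors through $\M$ of a finite subalgebra of $\alg{B}$. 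Your approach buys explicitness and self-containedness --- you never need finitely presentable objects, finitarity of $\M$, or the external criterion the paper cites, and you get the isomorphism concretely on all of $\BA$ at once; the paper's approach buys methodological uniformity with the rest of the article (lifting finite dualities through $\Ind$/$\Pro$ completions) and recycles its earlier structural results about $\var{A}^\omega$. Both are complete proofs; just make sure, if you write yours out in full, to note explicitly where finiteness of $L$ is used (finiteness of the partitions, and compactness of $\Sigma(\alg{B})$ guaranteeing that a continuous map into discrete $L$ has clopen fibres forming a finite partition), and that a bijective homomorphism in a variety is an isomorphism.
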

\begin{proof}
First we prove the statement on the finite level. In other words, we want to show that, in $\StoneL$,
$$ \Sigma_\alg{L}\M(\alg{B}) \cong \Vtop\Sigma(\alg{B})$$ 
holds for every finite Boolean algebra $\alg{B}$. More explicitly, after spelling out the definition of the functors involved we want to show
\begin{equation}\label{eq: isoStoneL}
 \big(\var{A}(\M(\alg{B}),\alg{L}), \val{im}\big) \cong \big(\BA(\alg{B},\alg{2}),\val{v}^\top\big)
\end{equation}
for every finite Boolean algebra $\alg{B}$.
First, since $\alg{B}$ is finite there is some positive integer $k$ such that $\alg{B} \cong \alg{2}^k$. We combine the following isomorphisms in $\Stone$. Due to Proposition \ref{prop:HomeoBooleanSkeleton} we know
$$ \var{A}\big(\M(\alg{B}),\alg{L}\big) \cong \BA\big(\B(\M(\alg{B})),\alg{2}\big),$$
And due to Corollary \ref{cor:BisoBMB} we know 
$$\B(\M(\alg{B})) \cong \alg{B}.$$  
Putting these together, we get  
$$ \var{A}(\M(\alg{B}),\alg{L}) \cong \BA(\alg{B},\alg{2}).$$
In fact, this even yields an isomorphism in $\StoneL$ as desired in (\ref{eq: isoStoneL}), because 
$$ \big(\var{A}(\M(\alg{B}),\alg{L}), \val{im}\big) \cong \big(\var{A}(\alg{L}^k, \alg{L}), \val{im}\big) \cong \big(\var{A}(\alg{L}^k,\alg{L}),\val{v}^\top\big)$$
where the last equation holds due to Proposition \ref{prop:homsareprojections}.

So we know that the restriction of $\M$ to the category of finite Boolean algebras $\M^\omega\colon \BA^\omega \to \var{A}$ is dual to the restriction $(\Vtop)^\omega$ of $\Vtop$ to the category $\SetL^\omega$. There is a unique (up to natural iso) finitary (i.e., filtered colimit preserving) extension of $\M^\omega$ to $\Ind(\BA^\omega) \simeq \BA$, and this extension is naturally isomorphic to the dual of $\Vtop$ (since $\Vtop$ preserves all limits except for the terminal object, it is the unique cofinitary extension of $(\Vtop)^\omega$). To show that $\M$ coincides with this unique extension (up to natural isomorphism), it suffices to show that $\M$ is finitary as well. Since $\M$ preserves monomorphisms (it is easy to see by definition that if $\varphi\in\BA(\alg{B}_1,\alg{B}_2)$ is injective, then $\M\varphi$ is injective as well), we can apply \cite[Theorem 3.4]{Adamek2019}, which states that $\M$ is finitary if and only if the following holds. \\
\emph{For every Boolean algebra $\alg{B} \in \BA$, for every finite subalgebra $\alg{A} \hookrightarrow \M(\alg{B})$ the inclusion factors through the image of the inclusion of some finite subalgebra $\alg{B}' \hookrightarrow \alg{B}$ under $\M$.} 

To see this write $\alg{A} \cong \prod_{i\leq n}\alg{S_i}$ as product of finite subalgebras of $\alg{L}$. Then, by Corollary \ref{cor:BisoBMB}, we know that $\B(\alg{A}) \cong \alg{2}^n$ embeds into $\alg{B}$. Now by Lemma \ref{lem:propertiesofM} we have $\M(\alg{2}^n) \cong \alg{L}^n$ and the natural inclusion $\prod_{i\leq n} \alg{S_i} \hookrightarrow \alg{L}^n$ yields our factorization 
$$
\begin{tikzcd}
\alg{A} \arrow[dr, hook] \arrow[r, hook]
& \M(\alg{B}) \\
& \M(\alg{2}^n) \arrow[u, hook]
\end{tikzcd}
$$
This concludes the proof.            
\end{proof} 

In particular, if $\alg{L}$ is primal, we get an explicit categorical equivalence witnessing Hu's theorem.  

\begin{corollary}{\cite{Hu1969}}\label{cor:hu}
If $\alg{L}$ is primal, then $\B \dashv \M$ yields a categorical equivalence between $\var{A}$ and $\BA$.
\end{corollary}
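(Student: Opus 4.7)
My plan is to verify that the unit and counit of the adjunction $\B \dashv \M$, whose existence is already established in Theorem \ref{thm:MadjointB}, are both natural isomorphisms. The counit $\epsilon_\alg{B}\colon \B(\M(\alg{B})) \to \alg{B}$ is always an isomorphism, without any primality assumption, by Corollary \ref{cor:BisoBMB} (together with the naturality of the construction of $\mathcal{T}_{(\cdot)}$ in Proposition \ref{prop:MandB}). Hence the entire content of the statement reduces to showing that the unit $\eta_\alg{A}\colon \alg{A} \to \M(\B(\alg{A}))$ is an isomorphism for every $\alg{A} \in \var{A}$ under the assumption that $\alg{L}$ is primal.

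The slickest route I would take runs through the duality of Theorem \ref{SPDuality}. Primality forces $\mathbb{S}(\alg{L}) = \{\alg{L}\}$, since a primal algebra admits no proper subalgebra. Consequently, every object $(X, \val{v}) \in \StoneL$ has $\val{v}$ constantly equal to $\alg{L}$, that is $\val{v} = \val{v^\top}$, and the morphism condition $\val{w}(m(x)) \leq \val{v}(x)$ becomes vacuous. Thus $\U\colon \StoneL \to \Stone$ is fully faithful as well as essentially surjective, with quasi-inverse $\Vtop$, i.e.\ $\Vtop \dashv \U$ is already an adjoint equivalence. Passing to the duals via Theorem \ref{SPDuality} and Stone duality, this transports to an adjoint equivalence between $\BA$ and $\var{A}$, which by Theorem \ref{thm:MadjointB} is precisely $\B \dashv \M$.

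If one preferred a purely algebraic verification, I would instead check directly that the embedding $\mathcal{T}_{(\cdot)}$ of Proposition \ref{prop:MandB} is surjective. For finite $\alg{A}$ this is immediate: by Proposition \ref{Afin = PS(L)}, primality forces $\alg{A} \cong \alg{L}^k$ for some $k$, and then $\B(\alg{L}^k) \cong \alg{2}^k$ (since in a primal algebra only $0$ and $1$ satisfy $T_1(x) = x$), while Lemma \ref{lem:propertiesofM} supplies $\M(\alg{2}^k) \cong \alg{L}^k$, so $\eta_{\alg{A}}$ is a bijection by a cardinality count. The extension to arbitrary $\alg{A} \in \var{A}$ then follows by writing $\alg{A}$ as a filtered colimit of its finite subalgebras and using that $\B$, being a left adjoint, preserves all colimits, while $\M$ is finitary as established in the proof of Theorem \ref{thm:MadjointB}; combined with $\var{A} \simeq \Ind(\var{A}^\omega)$, the componentwise isomorphism promotes to a global one. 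I do not anticipate any serious obstacle, as the heavy lifting has already been done in the preceding results; the only conceptual point is the trivialization of $\StoneL$ under primality, which underlies both approaches.
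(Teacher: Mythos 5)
Your first argument is exactly the reasoning the paper intends: the corollary is stated as an immediate consequence of Theorem \ref{thm:MadjointB}, since primality of $\alg{L}$ leaves $\alg{L}$ as its only subalgebra, so $\StoneL$ collapses to $\Stone$ (with $\Vtop \dashv \U$ an equivalence) and dualizing gives that $\B \dashv \M$ is an equivalence. Your alternative algebraic verification (finite case by $\alg{A} \cong \alg{L}^k$, $\B(\alg{L}^k)\cong\alg{2}^k$, $\M(\alg{2}^k)\cong\alg{L}^k$, then passage to filtered colimits using that $\B$ is a left adjoint and $\M$ is finitary) is also sound, but the duality route is the one matching the paper.
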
 

We also get an algebraic analogue of Proposition \ref{prop:Stonereflectivesubcategory}(i). 

\begin{corollary}\label{cor:BAreflsubcategory}
The functor $\M$ is fully faithful and identifies $\BA$ with a reflective subcategory of $\var{A}$.
\end{corollary}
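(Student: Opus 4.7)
The plan is to deduce Corollary~\ref{cor:BAreflsubcategory} from Theorem~\ref{thm:MadjointB} using the standard categorical principle that a right adjoint is fully faithful if and only if the counit of the adjunction is a natural isomorphism. Once fully faithfulness of $\M$ is established, the reflective subcategory statement follows essentially by definition, since a fully faithful right adjoint always exhibits its source as equivalent to a reflective subcategory of its target.

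The key step is therefore to show that the counit $\epsilon\colon \B\M \Rightarrow \id_\BA$ of $\B \dashv \M$ is a natural isomorphism. The cleanest route is to trace through the duality proof of Theorem~\ref{thm:MadjointB}: the adjunction $\B \dashv \M$ arises by transporting $\Vtop \dashv \U$ across the Stone and semi-primal dualities, so under this transport the counit of $\B \dashv \M$ corresponds to the unit of $\Vtop \dashv \U$. Since the latter is the identity natural transformation (as recorded in Subsection~\ref{subs:fourfunctors}), each component of $\epsilon$ is an isomorphism; this also provides a principled, morphism-level version of the object-level assertion of Corollary~\ref{cor:BisoBMB}. As a sanity check, one can describe $\epsilon_\alg{B}$ directly from the algebraic formulas of Proposition~\ref{prop:MandB}: the map $\xi \mapsto \xi(1)$ furnishes the inverse of the embedding $b \mapsto \mathcal{T}_b$ restricted to Boolean skeletons, and naturality in $\alg{B}$ is immediate from the componentwise definition of $\M$ on morphisms.

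With $\epsilon$ a natural iso, $\M$ is fully faithful. For the reflective subcategory claim, a fully faithful functor embeds its source as a full subcategory equivalent to its essential image $\M(\BA) \subseteq \var{A}$, and the left adjoint $\B$ transfers to a reflector onto this subcategory by composing with the equivalence $\BA \simeq \M(\BA)$ induced by $\M$. The only slightly delicate point in the argument is the identification of the abstractly chosen iso of Corollary~\ref{cor:BisoBMB} with the specific counit component $\epsilon_\alg{B}$; the duality trace handles this without direct computation, and the rest of the proof is routine categorical bookkeeping.
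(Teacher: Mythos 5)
Your argument is correct and follows essentially the same route as the paper: the corollary is obtained by transporting Proposition~\ref{prop:Stonereflectivesubcategory}(i) across the two dualities, i.e.\ using that $\M$ is naturally isomorphic to the dual of the fully faithful $\Vtop$ (equivalently, that the counit of $\B \dashv \M$ corresponds to the invertible unit of $\Vtop \dashv \U$), and then invoking the standard fact that a fully faithful right adjoint exhibits a reflective subcategory. Your extra remarks (the counit criterion and the explicit $\xi \mapsto \xi(1)$ sanity check via Proposition~\ref{prop:MandB}) are consistent with, and only elaborate on, the paper's intended proof.
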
 
   
By now we found detailed descriptions of most of the functors appearing in Figure \ref{fig:adjunctions}. We are still missing an algebraic understanding of the adjunction $\Q \dashv \I$. This gap is filled in the next subsection. As we will see, it is closely connected to the adjunction $\B \dashv \M$.  
\subsection{The subalgebra adjunctions}\label{subs:subalgebraadjunctions}
For every subalgebra $\alg{S}\leq \alg{L}$, there is an adjunction 
\begin{equation}\label{eq:subalgebraadjunction} 
  \xymatrix@C=35pt{ 
    \Stone \ar@/^/[rr]^{\V^{\alg{S}}}_{\bot} & & {\ \StoneL }  \ar@/^/[ll]^{\C^\alg{S}}}  
\end{equation}
which we explore in this subsection.  

The functor $\V^{\alg{S}}\colon \Stone\to\StoneL$ is given on objects by 
$$ \V^{\alg{S}}(X) = (X, \val{v}^\alg{S}) \text{ where } \forall x\in X: \val{v}^{\alg{S}}(x) = \alg{S},$$
and assigns every morphism to itself. 

The functor $\alg{C}^{\alg{S}}\colon \StoneL\to \Stone$ is given on objects by 
$$ \C^\alg{S}(X,\val{v}) = \{ x\in X \mid \val{v}(x) \leq \alg{S}\}.$$
On morphisms it acts via restriction, that is, given a morphism $m\colon (X,\val{v}) \to (Y,\val{w})$, define $m\mid_{\C^\alg{S}(X)}\colon \C^\alg{S}(X) \to \C^\alg{S}(Y)$. This is well-defined since 
$$x\in \C^\alg{S}(X,\val{v}) \Leftrightarrow \val{v}(x)\leq \alg{S} \Leftrightarrow \val{w}(m(x)) \leq \val{v}(x) \leq \alg{S} \Leftrightarrow m(x) \in \C^\alg{S}(Y,\val{w}).$$ 

Comparing this with Subsection \ref{subs:fourfunctors}, the reader may easily verify $ V^\alg{S} \dashv \C^\alg{S}$. Indeed, the adjunction $V^\alg{S} \dashv \C^\alg{S}$ generalizes the following adjunctions in Figure \ref{fig:adjunctions}:
\begin{itemize}
\item $\Vtop \dashv \U$ in the case where $\alg{S} = \alg{L}$ is the largest subalgebra of $\alg{L}$,
\item $\Vbot \dashv \C$ in the case where $\alg{S} = \langle 0,1\rangle$ is the smallest subalgebra of $\alg{L}$.
\end{itemize} 
What is special about these two extreme cases is the additional adjunction $\U \dashv \Vtop$, which `glues' the two adjunctions into the chain described in Subsection \ref{subs:fourfunctors}.

To better understand the adjunction corresponding to a subalgebra $\alg{S} \leq \alg{L}$, we dissect it into two parts as follows. 
$$
\xymatrix@C=70pt{ 
    \Stone \ar@/^/[r]^{\Vtop}_{\bot} & \Stone_\alg{S}  \ar@/^/[l]^{\U} \ar@/^/[r]^{\iota^\alg{S}}_\bot & {\ \StoneL }  \ar@/^/[l]^{(\C^\alg{S},-)}} 
$$
Here, $\iota^\alg{S}$ is the natural inclusion and the functor $(\C^{\alg{S}},-)$ is defined by 
$$ (X,\val{v}) \mapsto (\C^{\alg{S}}(X), \val{v}{\mid}_{\C^{\alg{S}}(X)})$$
on objects and, exactly like $\C^\alg{S}$, acts via restriction on morphisms. It is easy to see that this really is a decomposition of the adjunction (\ref{eq:subalgebraadjunction}), that is, 
$$ \V^{\alg{S}} = \iota^\alg{S} \circ \Vtop \text{ and } \C^{\alg{S}} = \U \circ (\C^{\alg{S}},-).$$
As before, we want to carry everything over to the algebraic side, where the dissection takes place through the subvariety 
$$ \var{A}_\alg{S} := \variety{\alg{S}}.$$  
We illustrate the entire situation in Figure \ref{fig:subalgadjunction}. Note that $\alg{S} \leq \alg{L}$ is itself semi-primal, so the semi-primal duality given by $\Sigma_\alg{S}$ and  $\Pi_{\alg{S}}$ as well as the adjunction $\B\dashv\M_\alg{S}$ make sense in this context.  
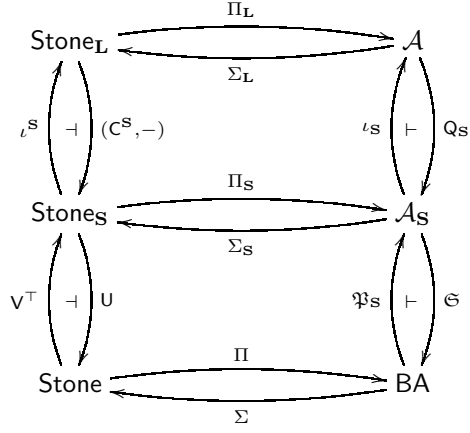
\begin{figure}[ht]
$$
\xymatrix@C=100pt@R=50pt{ 
\StoneL \ar@/^/[r]^{\Pi_\alg{L}}  
\ar@{<-}@/_8pt/[d]_{\iota^\alg{S}}^{\phantom{'}\dashv}
\ar@/^8pt/[d]^{(\C^{\alg{S}}, -)}
& 
\var{A} \ar@/^/[l]^{\Sigma_\alg{L}} 
\ar@{<-}@/_8pt/[d]_{\iota_\alg{S}}
\ar@/^8pt/[d]^{\Q_{\alg{S}}}_{\vdash\phantom{'}}
\\
\Stone_{\alg{S}} \ar@/^/[r]^{\Pi_\alg{S}}
\ar@{<-}@/_8pt/[d]_{\Vtop}^{\phantom{'}\dashv}
\ar@/^8pt/[d]^{\U}
&
\var{A}_{\alg{S}} \ar@/^/[l]^{\Sigma_\alg{S}}
\ar@{<-}@/_8pt/[d]_{\M_\alg{S}}
\ar@/^8pt/[d]^{\B}_{\vdash\phantom{'}}  
\\
\Stone \ar@/^/[r]^{\Pi}
& 
\BA \ar@/^/[l]^{\Sigma}  
}
$$ 
\caption{Dissecting the subalgebra adjunction of $\alg{S}\leq \alg{L}$.}
\label{fig:subalgadjunction}
\end{figure}      

Again, $\iota_\alg{S}$ denotes the natural inclusion. Although it may seem obvious, it is not immediate that $\iota_\alg{S}$ really is the dual of $\iota^\alg{S}$. To prove it, we make use of the following unary term, which will play an important role for the remainder of the subsection: 
$$\chi_S (x) = \bigvee_{s\in S} T_s(x).$$
On $\alg{L}$, this simply corresponds to the characteristic function of $S \subseteq L$. It is, furthermore, characteristic for the subvariety $\var{A}_\alg{S}$ in the following sense.  
\begin{lemma}\label{lem:chiS}
An algebra in $\var{A}$ is a member of $\var{A}_{\alg{S}}$ if and only if it satisfies the equation $\chi_S(x) \approx 1$.
\end{lemma}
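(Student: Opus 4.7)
The plan is to prove the two implications separately. The forward direction is a direct application of the HSP theorem, while the backward direction uses the quasivariety presentation of $\var{A}$ established earlier.

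For $(\Rightarrow)$, since $\var{A}_{\alg{S}} = \variety{\alg{S}} = \mathbb{H}\mathbb{S}\mathbb{P}(\alg{S})$, any equation that holds in $\alg{S}$ holds throughout $\var{A}_{\alg{S}}$. It therefore suffices to verify $\alg{S} \models \chi_S(x) \approx 1$. Here the key observation is that each $T_s$ preserves subalgebras (it takes values in $\{0,1\}$, which lies in every subalgebra of $\alg{L}$), so $T_s^{\alg{S}}$ is simply the restriction of $T_s^{\alg{L}}$ to $S$. Then for any $s \in S$, we have $\chi_S^{\alg{S}}(s) \geq T_s^{\alg{S}}(s) = 1$, so the equation holds in $\alg{S}$.

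For $(\Leftarrow)$, I would use the fact, recalled earlier in the paper, that semi-primality gives $\var{A} = \qvariety{\alg{L}} = \mathbb{I}\mathbb{S}\mathbb{P}(\alg{L})$. Hence any $\alg{A} \in \var{A}$ satisfying $\chi_S(x) \approx 1$ embeds into some power $\alg{L}^I$. Evaluated in $\alg{L}$, the term $\chi_S$ is precisely the characteristic function of $S \subseteq L$: for $\ell \in L$ one has $T_s^{\alg{L}}(\ell) = 1$ iff $\ell = s$, so $\chi_S^{\alg{L}}(\ell) = \bigvee_{s \in S} T_s^{\alg{L}}(\ell) = 1$ exactly when $\ell \in S$. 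Since the equation holds in $\alg{A}$, for every $a \in A$ the tuple $a \in L^I$ must have every coordinate in $S$. Thus the embedding $\alg{A} \hookrightarrow \alg{L}^I$ factors through $\alg{S}^I$, exhibiting $\alg{A} \in \mathbb{I}\mathbb{S}\mathbb{P}(\alg{S}) \subseteq \variety{\alg{S}} = \var{A}_{\alg{S}}$.

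I do not expect a genuine obstacle here; the lemma is essentially a direct translation of the definition of $\chi_S$ once the right framework is in place. The only point that requires care is bookkeeping the interpretation of $T_s$ and $\chi_S$ across $\alg{L}$, its subalgebras, and its powers, which is handled by the subalgebra-preservation of $T_s$ and the coordinatewise definition of operations on products. Everything else follows at once from semi-primality and the $\mathbb{I}\mathbb{S}\mathbb{P}(\alg{L}) = \mathbb{H}\mathbb{S}\mathbb{P}(\alg{L})$ description of $\var{A}$.
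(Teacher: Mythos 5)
Your proof is correct and follows essentially the same route as the paper: the forward direction holds because the equation $\chi_S(x) \approx 1$ is verified in $\alg{S}$ and hence in all of $\variety{\alg{S}}$, and the backward direction embeds $\alg{A}$ into a power $\alg{L}^I$ and uses the equation coordinatewise to factor the embedding through $\alg{S}^I$. Your extra bookkeeping about the interpretation of $T_s$ in $\alg{S}$ and the explicit appeal to $\var{A} = \qvariety{\alg{L}}$ only spells out steps the paper leaves implicit.
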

\begin{proof}
Clearly every member of $\var{A}_\alg{S}$ satisfies the equation since $\alg{S}$ satisfies it. For the other direction, let $\alg{A} \in \var{A}$ satisfy $\chi_S(a) = 1$ for all $a\in A$. We know that $\alg{A}$ can be embedded into some $\alg{L}^I$ and for each $a\in \alg{A}$ and $i \in I$, we have $\chi_\alg{S}(\pi_i(a)) = 1$ which implies that $\pi_i(a)\in\alg{S}$. Therefore, $\alg{A}$ can be embedded into $\alg{S}^I$.    
\end{proof}
Now, let $\alg{A} \in \var{A}_\alg{S}$ and let $h\in \var{A}(\iota_\alg{S}(\alg{A}), \alg{L})$ be a homomorphism. Since $h$ preserves equations, for every $a\in A$ we get 
$$ \chi_S(a) = 1 \Rightarrow \chi_S(h(a)) = 1$$
and, therefore, $h\in \var{A}(\alg{A}, \alg{S})$. So we showed $\var{A}(\alg{A},\alg{L}) = \var{A}_\alg{S}(\alg{A},\alg{S})$ for $\alg{A}\in\var{A}_\alg{S}$, which immediately implies the following.
\begin{corollary}
The inclusion functor $\iota_\alg{S}$ is the dual of the inclusion functor $\iota^\alg{S}$.
\end{corollary}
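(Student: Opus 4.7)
The Corollary claims that, under the two semi-primal dualities (the one for $\alg{L}$ and the one obtained by applying the same construction to its subalgebra $\alg{S}$), the algebraic inclusion $\iota_{\alg{S}}\colon \var{A}_{\alg{S}}\hookrightarrow \var{A}$ corresponds to the topological inclusion $\iota^{\alg{S}}\colon \Stone_{\alg{S}}\hookrightarrow \StoneL$. My plan is to exhibit a natural isomorphism
$$ \Sigma_{\alg{L}}\circ \iota_{\alg{S}} \;\cong\; \iota^{\alg{S}}\circ \Sigma_{\alg{S}}, $$
which, after applying $\Pi_{\alg{L}}$ on the left and using $\Pi_{\alg{L}}\Sigma_{\alg{L}}\cong \id_{\var{A}}$ together with $\Pi_{\alg{S}}\Sigma_{\alg{S}}\cong \id_{\var{A}_{\alg{S}}}$, translates into the mirror statement $\Pi_{\alg{L}}\circ \iota^{\alg{S}}\cong \iota_{\alg{S}}\circ \Pi_{\alg{S}}$ — precisely the sense in which $\iota_{\alg{S}}$ is dual to $\iota^{\alg{S}}$.

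Essentially all of the substantive work is already done in the paragraph preceding the Corollary. Fix $\alg{A}\in \var{A}_{\alg{S}}$. By Lemma \ref{lem:chiS} we have $\alg{A}\models \chi_{S}(x)\approx 1$, and this forces every $h\in \var{A}(\iota_{\alg{S}}(\alg{A}),\alg{L})$ to take values in $\alg{S}$, so the underlying sets of $\Sigma_{\alg{L}}\iota_{\alg{S}}(\alg{A})$ and $\iota^{\alg{S}}\Sigma_{\alg{S}}(\alg{A})$ coincide. First I would verify that this set-level identification is a homeomorphism of the two Stone-space topologies: the subbasic clopens $[a:\ell]$ with $\ell\in L$ that generate the left-hand topology agree with the subbasic clopens $[a:s]$ with $s\in S$ on the right-hand side, since $[a:\ell]=\varnothing$ whenever $\ell\in L\setminus S$. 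Secondly I would observe that the valuations agree: on both sides $\val{im}$ sends $h$ to $h(A)$, which automatically belongs to $\mathbb{S}(\alg{S})\subseteq \mathbb{S}(\alg{L})$, and this is exactly the compatibility required for $\iota^{\alg{S}}$ to reinterpret a $\Stone_{\alg{S}}$-object as an object of $\StoneL$. Together these two checks promote the set equality into an identity of objects in $\StoneL$.

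Naturality in $\alg{A}$ is then immediate: for $f\colon \alg{A}_{1}\to\alg{A}_{2}$ in $\var{A}_{\alg{S}}$ both $\Sigma_{\alg{L}}\iota_{\alg{S}}(f)$ and $\iota^{\alg{S}}\Sigma_{\alg{S}}(f)$ act by precomposition with $f$, so the pointwise identification glues into a natural isomorphism. I do not expect any serious obstacle; the Corollary is a categorical bookkeeping of the elementary fact, already recorded above, that a homomorphism out of an algebra in $\var{A}_{\alg{S}}$ into $\alg{L}$ cannot escape $\alg{S}$.
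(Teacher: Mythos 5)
Your proposal is correct and takes essentially the same route as the paper: the substance is exactly the observation (Lemma \ref{lem:chiS} plus preservation of the equation $\chi_S(x)\approx 1$) that every homomorphism $\iota_{\alg{S}}(\alg{A})\to\alg{L}$ lands in $\alg{S}$, giving $\var{A}(\iota_{\alg{S}}(\alg{A}),\alg{L})=\var{A}_{\alg{S}}(\alg{A},\alg{S})$, from which the paper declares the corollary immediate. Your extra checks on the subbasic clopens, the valuation $\val{im}$, and naturality just spell out the bookkeeping the paper leaves implicit.
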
 

To complete the picture, we only need to describe the functor $\Q_\alg{S}$ from Figure \ref{fig:subalgadjunction}. Let $\alpha\colon \id_\var{A} \Rightarrow \iota_\alg{S}\circ \Q_\alg{S}$ be the unit of the adjunction $\Q_\alg{S} \dashv \iota_\alg{S}$. For any $\alg{A} \in \var{A}$, the algebra $\Q_\alg{S}(\alg{A})$ is universal for $\var{A}_\alg{S}$ in the following sense:

\emph{For every $\alg{B}\in\var{A}_\alg{S}$ and every homomorphism $f\colon \alg{A} \to \alg{B}$, there is a unique $\hat{f}\colon \Q_\alg{S}(\alg{A}) \to \alg{B}$ such that $\hat{f}\circ \alpha_\alg{A} = f$.} 
$$
\begin{tikzcd}
\alg{A} \arrow[rd, swap, "f"] \arrow[r, "\alpha_\alg{A}"] & \Q_\alg{S}(\alg{A}) \arrow[d, dashrightarrow, "\exists \hat{f}"] \\
& \alg{B}
\end{tikzcd}
$$
Therefore, the functor $\Q_\alg{S}$ may be understood as a quotient (in fact, as the largest quotient contained in $\var{A}_\alg{S}$). There is a well-known connection between quotients and equations introduced by Banaschewski and Herrlich in \cite{BanaschewskiHerrlich1976}. Not surprisingly, the equation corresponding to $\Q_\alg{S}$ is given by $\chi_S(x) \approx 1$, which is an easy consequence of the above discussion together with Lemma \ref{lem:chiS}. We summarize the results of this subsection as follows.

\begin{theorem}\label{thm:subadjunction}
For every subalgebra $\alg{S} \leq \alg{L}$, there is an adjunction 
\begin{equation*} 
  \xymatrix@C=35pt{ 
    \BA \ar@/^/[rr]^{\I_{\alg{S}}}_{\top} & & {\ \var{A} }  \ar@/^/[ll]^{\K_\alg{S}}}  
\end{equation*}
which can be dissected as
$$
\xymatrix@C=70pt{ 
    \BA \ar@/^/[r]^{\M_\alg{S}}_{\top} & \var{A}_\alg{S}  \ar@/^/[l]^{\B} \ar@/^/[r]^{\iota_\alg{S}}_\top & {\ \var{A} }  \ar@/^/[l]^{\Q_\alg{S}}} 
$$
where $\iota_\alg{S}$ is the natural inclusion functor of the subvariety $\variety{\alg{S}}\hookrightarrow \variety{\alg{L}}$ and $\Q_\alg{S}$ is the quotient functor corresponding to the equation $ \chi_S(x) \approx 1.$ 
\end{theorem}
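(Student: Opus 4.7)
The plan is to assemble the statement from two component adjunctions already within reach, and then compose them.

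First, for the right-hand factor $\B \dashv \M_\alg{S}$: I would observe that the subalgebra $\alg{S}\leq \alg{L}$ is itself a semi-primal bounded lattice expansion, so that Theorem \ref{thm:MadjointB} applies verbatim with $\alg{S}$ in place of $\alg{L}$. Indeed, $\alg{S}$ is closed under the bounded lattice operations of $\alg{L}$ (including $0,1$), and for each $\ell \in S$ the unary term $T_\ell$, defined on $\alg{L}$, restricts to the characteristic function of $\{\ell\}$ on $\alg{S}$; so by Proposition \ref{SP-char-Ts} applied within the signature of $\alg{S}$, the algebra $\alg{S}$ is semi-primal with an underlying bounded lattice. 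Theorem \ref{thm:MadjointB} then gives $\B \dashv \M_\alg{S}$ as adjunction between $\BA$ and $\var{A}_\alg{S}$.

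Second, for the left-hand factor $\Q_\alg{S}\dashv \iota_\alg{S}$: Lemma \ref{lem:chiS} identifies $\var{A}_\alg{S}$ as the subvariety of $\var{A}$ cut out by the single equation $\chi_S(x)\approx 1$. By the correspondence between equational subclasses and reflective subcategories \cite{BanaschewskiHerrlich1976}, the inclusion functor $\iota_\alg{S}$ admits a left adjoint, which I would construct explicitly by setting $\Q_\alg{S}(\alg{A}) := \alg{A}/\theta_\alg{A}$ where $\theta_\alg{A}$ is the congruence on $\alg{A}$ generated by the set $\{(\chi_S(a),1)\mid a\in A\}$. The universal property is immediate: any homomorphism $f \colon \alg{A}\to \iota_\alg{S}(\alg{B})$ with $\alg{B}\in\var{A}_\alg{S}$ satisfies $f(\chi_S(a)) = \chi_S(f(a)) = 1$ by Lemma \ref{lem:chiS}, and so $f$ factors uniquely through the quotient $\Q_\alg{S}(\alg{A})$. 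This is precisely the characterization of $\Q_\alg{S}$ claimed in the theorem.

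Composing the two adjunctions yields $\B\circ \Q_\alg{S} \dashv \iota_\alg{S}\circ \M_\alg{S}$, which is the desired adjunction $\K_\alg{S}\dashv \I_\alg{S}$, and the dissection is the very construction used. The only real obstacle is the initial check that $\alg{S}$ satisfies Assumption \ref{assumption}, which as outlined above is short.
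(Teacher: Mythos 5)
Your proposal is correct. The overall decomposition is of course the one demanded by the statement, and your treatment of the right-hand factor matches the paper's: the paper also simply notes that $\alg{S}$ is itself semi-primal and invokes Theorem \ref{thm:MadjointB} with $\alg{S}$ in place of $\alg{L}$ (your explicit justification via Proposition \ref{SP-char-Ts} -- the $\alg{L}$-terms $T_\ell$ for $\ell\in S$ restrict to the characteristic functions on $\alg{S}$ -- is a small but welcome addition). Where you genuinely diverge is the left-hand factor $\Q_\alg{S}\dashv\iota_\alg{S}$. The paper obtains this adjunction through the duality: it first shows, via Lemma \ref{lem:chiS}, that $\var{A}(\iota_\alg{S}(\alg{A}),\alg{L})=\var{A}_\alg{S}(\alg{A},\alg{S})$, so that $\iota_\alg{S}$ is the dual of the topological inclusion $\iota^\alg{S}$, whose right adjoint $(\C^\alg{S},-)$ was constructed on the $\StoneL$ side; dualizing yields the left adjoint $\Q_\alg{S}$, which is then identified as the quotient by $\chi_S(x)\approx 1$ via the Banaschewski--Herrlich correspondence together with Lemma \ref{lem:chiS}. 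You instead build the reflector directly and algebraically, as $\alg{A}/\theta_\alg{A}$ with $\theta_\alg{A}$ the congruence generated by $\{(\chi_S(a),1)\mid a\in A\}$, and verify the universal property by hand; this is correct (the quotient satisfies $\chi_S(x)\approx 1$, hence lies in $\var{A}_\alg{S}$ by Lemma \ref{lem:chiS}, and any $f\colon\alg{A}\to\iota_\alg{S}(\alg{B})$ kills the generators of $\theta_\alg{A}$). Your route is more elementary and self-contained, needing no duality for that factor; what the paper's route buys is the explicit identification of $\Q_\alg{S}$ as the dual of the concrete topological functor $(\C^\alg{S},-)$, which is what lets it populate Figure \ref{fig:subalgadjunction} and tie the subalgebra adjunctions into the duality-theoretic picture of Section \ref{sec:adcuntions}.
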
   

In particular, in the case where $\alg{S}$ is the smallest subalgebra of $\alg{L}$, we can recover the functors $\I = \iota_\alg{S} \circ \M_\alg{S}$ and $\Q$ from Figure \ref{fig:adjunctions} . 

\begin{corollary}
The functor $\I\colon \BA \rightarrow \var{A}$ is, up to categorical equivalence, an inclusion. The functor $\Q\colon \var{A} \to \BA$ is, up to categorical equivalence, the quotient by the equation 
$$\chi_{E}(x) \approx 1,$$
where $\alg{E} = \langle 0,1 \rangle$ is the smallest subalgebra of $\alg{L}$.
\end{corollary}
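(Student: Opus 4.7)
The plan is to specialize Theorem \ref{thm:subadjunction} to the case $\alg{S} = \alg{E} = \langle 0, 1 \rangle$, the smallest subalgebra of $\alg{L}$, and to combine it with Hu's theorem in the form of Corollary \ref{cor:hu}. On the topological side we have $\V^\alg{E} = \Vbot$ and $\C^\alg{E} = \C$, so passing to the algebraic duals, the functors $\I$ and $\Q$ from Figure \ref{fig:adjunctions} coincide with the functors $\I_\alg{E}$ and $\K_\alg{E}$ from Theorem \ref{thm:subadjunction}.

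The key preliminary observation is that $\alg{E}$ is itself primal. As a subalgebra of the semi-primal algebra $\alg{L}$, the algebra $\alg{E}$ is again semi-primal and a bounded lattice-expansion. Every subalgebra of a semi-primal lattice-expansion contains the bounds $0$ and $1$, so any subalgebra of $\alg{E}$ must contain the generators $\{0,1\}$ of $\alg{E}$ itself; hence $\alg{E}$ has no proper subalgebras. A semi-primal algebra without proper subalgebras is automatically primal, because every operation on such an algebra trivially preserves subalgebras and is thus term-definable.

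Consequently, Corollary \ref{cor:hu} applied to $\alg{E}$ yields that $\B \dashv \M_\alg{E}$ is a categorical equivalence $\var{A}_\alg{E} \simeq \BA$. The dissection provided by Theorem \ref{thm:subadjunction} then gives
$$\I = \iota_\alg{E} \circ \M_\alg{E} \quad \text{and} \quad \Q = \B \circ \Q_\alg{E},$$
where $\iota_\alg{E}$ is the inclusion of the subvariety $\var{A}_\alg{E} \hookrightarrow \var{A}$ and, by Theorem \ref{thm:subadjunction}, $\Q_\alg{E}$ is the quotient functor corresponding to the equation $\chi_E(x) \approx 1$. Since composing with one half of an equivalence pair yields a functor naturally isomorphic to its partner in the dissection, up to the equivalence $\B \dashv \M_\alg{E}$ the functor $\I$ reduces to the inclusion $\iota_\alg{E}$, and $\Q$ reduces to the quotient by $\chi_E(x) \approx 1$.

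There is no substantive obstacle here: once primality of $\alg{E}$ has been noted, the statement is a direct readoff from Theorem \ref{thm:subadjunction} and Corollary \ref{cor:hu}. The only careful point is matching the notational conventions of Figure \ref{fig:adjunctions} with those of Theorem \ref{thm:subadjunction}, that is, verifying that $\I$ is the right adjoint of the pair $\Q \dashv \I$ obtained by dualising $\Vbot \dashv \C$; this is immediate from the conventions already in place.
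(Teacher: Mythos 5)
Your proposal is correct and follows essentially the same route as the paper: note that $\alg{E}=\langle 0,1\rangle$ is primal, invoke Corollary \ref{cor:hu} to see that $\B \dashv \M_\alg{E}$ is an equivalence, and then read the statement off the dissection in Theorem \ref{thm:subadjunction}. Your extra justification that $\alg{E}$ has no proper subalgebras (and hence is primal) is a correct elaboration of a step the paper states without proof.
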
 
\begin{proof}
Being the smallest subalgebra of a semi-primal algebra, $\alg{E}$ is primal. Therefore, by Corollary \ref{cor:hu}, the adjunction $\B \dashv \M_\alg{E}$ is an equivalence of categories. The statement follows from Theorem \ref{thm:subadjunction}.
\end{proof}

Clearly Corollary \ref{cor:BAreflsubcategory} holds not only for $\M$, but for all the functors $\I_\alg{S}$. Among them, $\I$ is special since it also has a right-adjoint. This yields the following algebraic version of Proposition \ref{prop:Stonereflectivesubcategory}(ii). 

\begin{corollary}
The functor $\I$ is fully faithful and identifies $\BA$ with a reflective and coreflective subcategory of $\var{A}$. 
\end{corollary}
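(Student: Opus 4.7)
The plan is to decompose $\I$ as $\I = \iota_\alg{E} \circ \M_\alg{E}$, where $\alg{E} = \langle 0, 1 \rangle$ is the smallest subalgebra of $\alg{L}$, and then combine a fully-faithfulness argument with the adjunction chain $\Q \dashv \I \dashv \B$ already recorded in Figure \ref{fig:adjunctions}.

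For fully faithfulness, I would first observe that $\iota_\alg{E}\colon \var{A}_\alg{E} \hookrightarrow \var{A}$ is fully faithful, being the inclusion of a full subcategory. Next, I would argue that $\M_\alg{E}\colon \BA \to \var{A}_\alg{E}$ is fully faithful: as the smallest subalgebra of the semi-primal algebra $\alg{L}$, $\alg{E}$ has no proper subalgebras and is hence primal, so Corollary \ref{cor:hu} promotes $\B \dashv \M_\alg{E}$ to a categorical equivalence between $\var{A}_\alg{E}$ and $\BA$, and in particular $\M_\alg{E}$ is fully faithful. Since fully faithfulness is closed under composition, $\I$ is fully faithful.

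For the two subcategory claims, I would invoke the standard fact that a fully faithful right adjoint identifies its domain (up to equivalence) with a reflective subcategory of its codomain, and dually a fully faithful left adjoint identifies it with a coreflective one. The two adjunctions $\Q \dashv \I$ and $\I \dashv \B$ place $\I$ in both situations simultaneously, so $\BA$ embeds into $\var{A}$ as a subcategory that is both reflective and coreflective.

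No step is a serious obstacle. Each of the two required adjunctions is already at hand: $\I \dashv \B$ as the $\alg{S} = \alg{E}$ instance of Theorem \ref{thm:subadjunction} combined with the equivalence from Corollary \ref{cor:hu}, and $\Q \dashv \I$ as the dual of the topological adjunction $\Vbot \dashv \C$ discussed in Subsection \ref{subs:fourfunctors}. The single conceptually essential point is that $\alg{E}$ is primal, which is what upgrades $\M_\alg{E}$ from a mere right adjoint to an equivalence and thereby unlocks the fully faithfulness that the two category-theoretic folklore lemmas require.
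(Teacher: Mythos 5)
Your proposal is correct and follows essentially the same route as the paper: fully faithfulness of $\I = \iota_\alg{E}\circ\M_\alg{E}$ comes from the equivalence of Corollary \ref{cor:hu} together with the full subvariety inclusion, and reflectivity/coreflectivity are then read off from the chain $\Q \dashv \I \dashv \B$ via the standard facts about fully faithful right and left adjoints. One correction to your sourcing, though: the $\alg{S}=\alg{E}$ instance of Theorem \ref{thm:subadjunction} yields the adjunction in which $\I$ is the \emph{right} adjoint, i.e.\ (after identifying $\B\circ\Q_{\alg{E}}$ with $\Q$ via Corollary \ref{cor:hu}) it gives $\Q \dashv \I$, whereas $\I \dashv \B$ is the dual of $\U \dashv \Vbot$ from Subsection \ref{subs:fourfunctors} (the dual of $\Vbot \dashv \C$ is again $\Q \dashv \I$); so your two attributions are swapped, and in particular $\I \dashv \B$ cannot be extracted from Theorem \ref{thm:subadjunction} alone, since that theorem only ever exhibits $\I_\alg{S}$ as a right adjoint. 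The argument itself is unaffected, because both adjunctions are indeed already recorded in Figure \ref{fig:adjunctions}.
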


We showed that, if a finite lattice-based algebra $\alg{M}$ is semi-primal, then there is an adjunction $\M_\alg{E} \dashv \B \dashv \M_\alg{M}$, where $\alg{E}$ is the smallest subalgebra of $\alg{M}$. In the next subsection we show that, conversely, the existence of an adjunction resembling this one fully characterizes semi-primality of a finite lattice-based algebra $\alg{M}$. 

\subsection{Characterizing semi-primality via adjunctions.}\label{subs: topologicalfunctors} 
The aim of this subsection is to find sufficient conditions for an adjoint of $\M_\alg{M}$ to imply semi-primality of the algebra $\alg{M}$. We will then show that, in particular, these conditions are consequences of $\U$ and $\B$ from Figure \ref{fig:adjunctions} being (essentially) \emph{topological functors}.  

Recall that, in Definition \ref{def:Boolpowerfunctor}, the Boolean power functor $\M_\alg{M}\colon \BA \to \variety{\alg{M}}$ was defined for arbitrary finite algebras $\alg{M}$. Of course, if $\alg{S}$ is a subalgebra of $\alg{M}$, then $\M_\alg{S}$ can also be seen as a functor into $\variety{\alg{M}}$, and in the following there is no need to distinguish between these two functors in our notation. The functor $\M_\alg{M}$ is faithful (unless $\alg{M}$ is trivial), but it is usually not full. In fact, it is easy to see that $\M_\alg{M}$ can only be full if $\alg{M}$ does not have any non-trivial automorphisms.      

In the main theorem of this subsection we show that, if $\M_\alg{M}$ is full and has a left-adjoint resembling $\B$, then a lattice-based algebra $\alg{M}$ is semi-primal.   
 
\begin{theorem}\label{thm:adjunctionSemiCharact}
Let $\alg{M}$ be a finite lattice-based algebra. Then $\alg{M}$ is semi-primal if and only if $\M_\alg{M}$ is full and there is a faithful functor $\mathfrak{s} \colon \variety{\alg{M}} \to \BA$ which satisfies
$$\M_{\alg{E}} \dashv \mathfrak{s} \dashv \M_{\alg{M}},$$
where $\alg{E} = \langle 0,1 \rangle$ is the smallest subalgebra of $\alg{M}$. 
\end{theorem}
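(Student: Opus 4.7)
The forward direction falls out of the results already established. Given that $\alg{M}$ is semi-primal and lattice-based, it satisfies Assumption \ref{assumption} (with $\alg{M}$ in place of $\alg{L}$), so I set $\mathfrak{s}\coloneqq\B$. The adjunction $\B\dashv\M_\alg{M}$ is exactly Theorem \ref{thm:MadjointB}. The adjunction $\M_\alg{E}\dashv\B$ is the instance of Theorem \ref{thm:subadjunction} corresponding to the smallest subalgebra $\alg{E}\leq\alg{M}$: since $\alg{E}$ is primal, Corollary \ref{cor:hu} identifies $\var{A}_\alg{E}$ with $\BA$ via $\B$, so after precomposing with the inclusion $\iota_\alg{E}$ the left adjoint of $\B$ is precisely $\M_\alg{E}$. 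Finally, $\B$ is faithful as the dual of the forgetful functor $\U$, and $\M=\M_\alg{M}$ is full (in fact fully faithful) by Corollary \ref{cor:BAreflsubcategory}.

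For the backward direction my strategy is to verify the characterization of semi-primality given by Proposition \ref{SP-char-simplearithmetic}, namely that $\variety{\alg{M}}$ is arithmetical, every subalgebra of $\alg{M}$ is simple, and the only internal isomorphisms of $\alg{M}$ are identities on subalgebras. Congruence distributivity of $\variety{\alg{M}}$ is immediate from $\alg{M}$ being lattice-based, since the median yields a majority term. The remaining properties must be extracted from the adjunction data. First I note that $\M_\alg{M}$ is automatically faithful (two distinct Boolean homomorphisms are separated by the characteristic-like elements of the Boolean power), so combined with fullness it is fully faithful, and the counit $\mathfrak{s}\M_\alg{M}\Rightarrow\id_\BA$ is a natural isomorphism. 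Using Lemma \ref{lem:propertiesofM} this gives $\mathfrak{s}(\alg{M}^n)\cong\alg{2}^n$ and, via the hom-set bijection,
\begin{equation*}
\variety{\alg{M}}(\alg{M}^n,\alg{M}^k)\cong\BA(\alg{2}^n,\alg{2}^k),
\end{equation*}
so the only morphisms between finite powers of $\alg{M}$ are coordinate-projection tuples. In particular $\mathrm{End}(\alg{M})=\{\mathrm{id}\}$, and a short argument reducing an internal isomorphism $\varphi\colon\alg{S}_1\to\alg{S}_2$ to a morphism between suitable Boolean powers rules out any non-identity internal isomorphisms.

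The heart of the argument is then to produce a ternary discriminator term (equivalently, by Proposition \ref{SP-char-Ts}, the unary terms $T_0$ and $\tau_\ell$). The key tool here is that $\mathfrak{s}$, being both a left and a right adjoint, preserves all limits and all colimits. I would apply this to the free algebras $F(n)\in\variety{\alg{M}}$: because homomorphisms out of $F(n)$ classify $n$-tuples, the adjunction gives $\mathfrak{s}(F(n))\cong\alg{2}^{|M|^n}$ and the unit $\eta_{F(n)}\colon F(n)\hookrightarrow\M_\alg{M}(\mathfrak{s}(F(n)))\cong\alg{M}^{M^n}$ realises $F(n)$ as a subalgebra of $\alg{M}^{M^n}$. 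Fullness of $\M_\alg{M}$ now forces sufficiently many coordinate-indexing homomorphisms to live inside this image, which in turn forces the characteristic functions $T_\ell$ (and hence the discriminator, via the term constructed in the proof of Proposition \ref{SP-char-Ts}) to be term-definable in $\alg{M}$. Once $\alg{M}$ is quasi-primal and has no non-trivial internal isomorphisms, Proposition \ref{SP-char-discriminator} yields semi-primality.

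The main obstacle I anticipate is the last step: extracting \emph{concrete} term operations of $\alg{M}$ from the purely categorical hypothesis on $\mathfrak{s}$ and $\M_\alg{M}$. The forward direction constructs these terms by hand, but in the converse one has no a priori grip on the clone of $\alg{M}$, so one must argue indirectly—by comparing the image of $\eta_{F(n)}$ with the would-be clone of an already-semi-primal algebra, and showing that fullness and the two-sided adjoint structure of $\mathfrak{s}$ leave no room for strict inclusion. I expect the cleanest formalisation to go via Proposition \ref{SP-char-Ts}(3), exhibiting $T_0$ and each $\tau_\ell$ at the level of $F(1)$ and then transporting term-definability back to $\alg{M}$.
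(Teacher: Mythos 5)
Your forward direction is correct and is essentially the paper's: take $\mathfrak{s}=\B$, quote Theorem \ref{thm:MadjointB} for $\B\dashv\M_\alg{M}$, the subalgebra adjunction (Theorem \ref{thm:subadjunction} with Corollary \ref{cor:hu}) for $\M_\alg{E}\dashv\B$, and duality with $\U$, $\Vtop$ for faithfulness/fullness. The problems are in the converse. First, your opening plan via Proposition \ref{SP-char-simplearithmetic} breaks immediately: the median only gives a majority term, hence congruence distributivity, but \emph{arithmeticity} also requires congruence permutability (a Mal'cev term), which does not follow from being lattice-based; moreover you never address simplicity of the subalgebras of $\alg{M}$. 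Second, the route you actually pursue --- extracting the terms $T_0,\tau_\ell$ (equivalently a discriminator) from the embedding $\eta_{F(n)}\colon F(n)\hookrightarrow \M_\alg{M}(\mathfrak{s}(F(n)))\cong\alg{M}^{M^n}$ --- stalls exactly at the step you flag yourself. Fullness of $\M_\alg{M}$ only gives information about homomorphisms \emph{between Boolean powers}; it says nothing about which elements of $\alg{M}^{M^n}$ lie in the image of the free algebra, so the assertion that it ``forces the characteristic functions $T_\ell$ to be term-definable'' is not an argument, and I see no easy way to make it one. As it stands, the converse is not proved.

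For comparison, the paper avoids any term extraction. It verifies the Baker--Pixley criterion (Proposition \ref{SP-char-squaresubalg}): the median supplies the majority term, and the remaining condition is checked by a finite counting argument with both adjunctions. From faithfulness of $\mathfrak{s}$ one gets that the unit of $\mathfrak{s}\dashv\M_\alg{M}$ is mono, and since $\mathfrak{s}$ is a right adjoint it preserves monos and finite products; together with fullness of $\M_\alg{M}$ this yields $\mathfrak{s}(\alg{A})\cong\alg{2}$ iff $\alg{A}$ is (isomorphic to) a subalgebra of $\alg{M}$, and then $|\BA(\mathfrak{s}(\alg{S}),\alg{2})|=|\variety{\alg{M}}(\alg{S},\alg{M})|=1$ kills all non-trivial internal isomorphisms. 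For $\alg{A}\leq\alg{M}^2$: if the two coordinate projections agree, $\alg{A}$ is a diagonal; if not, $\mathfrak{s}(\alg{A})\cong\alg{2}^2$, and counting $|\variety{\alg{M}}(\alg{E}^2,\alg{A})|=|\BA(\alg{2}^2,\mathfrak{s}(\alg{A}))|=4$ via $\M_\alg{E}\dashv\mathfrak{s}$ forces $(0,1),(1,0)\in\alg{A}$, whence $\alg{A}=p_1(\alg{A})\times p_2(\alg{A})$ by the lattice identity $(a,d)=\big((a,b)\wedge(1,0)\big)\vee\big((c,d)\wedge(0,1)\big)$. If you want to salvage your approach, you would need to redirect it toward this kind of subalgebra-of-the-square analysis rather than toward term-definability of $T_\ell$.
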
  

\begin{proof}
If $\alg{M}$ is semi-primal, then $\M_\alg{M}$ is full since it is dual to the full functor $\Vtop$, the functor $\mathfrak{s} = \B$ is faithful since it is dual to the faithful functor $\U$ and $\M_{\alg{E}} \dashv \B \dashv \M_{\alg{M}}$ was shown in the last two subsections. 

Now for the converse, assume that $\M_\alg{M}$ is full and there is a faithful functor $\mathfrak{s}\colon \variety{\alg{M}} \to \BA$ with $\M_{\alg{E}} \dashv \mathfrak{s} \dashv \M_{\alg{M}}$. For abbreviation we write $\var{V}$ for $\variety{\alg{M}}$.  We will make use of the following properties of $\mathfrak{s}$:
\begin{enumerate}[(i)]
\item The unit $\eta\colon \id_{\var{V}} \Rightarrow \M_\alg{M} \circ \mathfrak{s}$ is a monomorphism in each component, 
\item $\mathfrak{s}$ preserves monomorphisms and finite products. 
\end{enumerate}
Condition (i) follows from $\mathfrak{s}$ being faithful and (ii) follows from $\mathfrak{s}$ being a right-adjoint. 

Our first goal is to prove the equivalence
\begin{equation}
\mathfrak{s}(\alg{A}) \cong \alg{2} \Leftrightarrow \exists \alg{S} \in \mathbb{S}(\alg{M}): \alg{A} \cong \alg{S}. 
\end{equation}       
If $\mathfrak{s}(\alg{A}) \cong \alg{2}$, use that by (i) there is an embedding $\alg{A} \hookrightarrow \M_{\alg{M}}(\mathfrak{s}(\alg{A}))$. Since $\M_{\alg{M}}(\mathfrak{s}(\alg{A})) \cong\alg{M}$, it follows that $\alg{A}$ is isomorphic to a subalgebra of $\alg{M}$. Conversely, first note that $\mathfrak{s}(\alg{M}) \cong \alg{2}$ since, using that $\M_{\alg{M}}$ is full and $\mathfrak{s} \dashv \M_{\alg{M}}$, we have
$$1 = |\BA(\alg{2}, \alg{2})| = |\var{V}(\alg{M}, \alg{M})| = |\var{V}\big(\alg{M}, \M_{\alg{M}}(\alg{2})\big)| = |\BA\big(\mathfrak{s}(\alg{M}), \alg{2})\big)|,$$
which is only possible for $\mathfrak{s}(\alg{M}) \cong \alg{2}$. 
Now if $\alg{A} \cong \alg{S} \in \mathbb{S}(\alg{M})$ then, due to (ii), the natural embedding $\alg{S} \hookrightarrow {\alg{M}}$ induces an embedding $\mathfrak{s}(\alg{S}) \hookrightarrow \mathfrak{s}(\alg{M})$. Therefore $\mathfrak{s}(\alg{S}) \cong \alg{2}$ since $\mathfrak{s}(\alg{M}) \cong \alg{2}$ does not have any proper subalgebras.

Next we show that $\alg{M}$ does not have any non-trivial internal isomorphisms. For every subalgebra $\alg{S} \in \mathbb{S}(\alg{M})$, there is a bijection between the set of Boolean homomorphisms $\mathfrak{s}(\alg{S}) \to \alg{2}$ and the set of homomorphisms $\alg{S} \to \M_{\alg{M}}(\alg{2})$. Due to (4) we have $\mathfrak{s}(\alg{S}) \cong \alg{2}$, so the former only has one element. Since $\M_{\alg{M}}(\alg{2}) \cong \alg{M}$ this means that there is only one homomorphism $\alg{S} \to \alg{M}$, namely the identity on $\alg{S}$. Every non-trivial internal isomorphism with domain $\alg{S}$ would define another such homomorphism, resulting in a contradiction. 

We now show that $\alg{M}$ is semi-primal, using the characterization of semi-primality in Proposition \ref{SP-char-squaresubalg}. That is, we want to show that $\alg{M}$ has a majority term and every subalgebra of $\alg{M}^2$ is either a product of subalgebras or the diagonal of a subalgebra of $\alg{M}$. Since $\alg{M}$ is based on a lattice, a majority term is given by the median (see the paragraph before Proposition \ref{SP-char-squaresubalg}). Let $\alg{A} \leq \alg{M}^2$ be a subalgebra and let $\iota\colon \alg{A} \hookrightarrow \alg{M}^2$ be its natural embedding. Due to (ii), this embedding induces an embedding $\mathfrak{s}(\alg{A}) \hookrightarrow \mathfrak{s}(\alg{M}^2)$ into $\mathfrak{s}(\alg{M}^2) \cong \alg{2}^2$. Therefore, either $\mathfrak{s}(\alg{A}) \cong \alg{2}^2$ or $\mathfrak{s}(\alg{A}) \cong \alg{2}$. Let $p_1\colon \alg{A} \to \alg{M}$ and $p_2\colon \alg{A} \to \alg{M}$ be $\iota$ followed by the respective projections $\alg{M}^2 \to \alg{M}$.

First assume that $p_1$ and $p_2$ coincide. Then clearly $\alg{A}$ embeds into $\alg{M}$, and therefore it is isomorphic to some subalgebra $\alg{S}$ of $\alg{M}$. Since $\alg{M}$ has no non-trivial internal isomorphisms, $\alg{A}$ needs to coincide with the diagonal of $\alg{S}$.

If $p_1$ and $p_2$ are distinct then, using that $\mathfrak{s}$ is faithful, the morphisms $\mathfrak{s}p_1\colon \mathfrak{s}(\alg{A}) \to \alg{2}$ and $\mathfrak{s}p_2 \colon \mathfrak{s}(\alg{A}) \to \alg{2}$ are distinct as well. This implies that $\mathfrak{s}(\alg{A}) \cong \alg{2}^2$. Using the adjunction $\M_{E} \dashv \mathfrak{s}$ we get 
$$ 4 = |\BA(\alg{2}^2, \mathfrak{s}(\alg{A}))| = |\var{V}(\alg{E}^2, \alg{A})| \text{ and } 4 = |\BA(\alg{2}^2, \mathfrak{s}(\alg{M}^2))| = |\var{V}(\alg{E}^2, \alg{M}^2)|.$$
So there are exactly four distinct homomorphisms $\alg{E}^2 \to \alg{A}$ and, since $\iota$ is a monomorphism, their compositions with $\iota$ are also four distinct homomorphisms $\alg{E}^2 \to \alg{M}^2$. Therefore every of the former homomorphisms arises in such a way. In particular, the natural embedding $\alg{E}^2 \hookrightarrow \alg{M}^2$ arises in this way, which implies $(0,1) \in \alg{A}$ and $(1,0) \in \alg{A}$. As noted in \cite{DaveySchumann1991}, this leads to $\alg{A} = p_1(\alg{A}) \times p_2(\alg{A})$, since whenever $(a,b), (c,d) \in \alg{A}$ we also have 
$$(a,d) = \big((a,b) \wedge (1, 0)\big) \vee \big( (c,d) \wedge (0,1) \big) \in \alg{A}.$$
This concludes the proof.            
\end{proof}

In the remainder of this subsection we show how this theorem relates to the theory of \emph{topological functors} (see, \emph{e.g.}, \cite[Chapter VI.21]{AdamekHerrlichStrecker1990} or \cite[Chapter 7]{Borceux1994}).      
Intuitively speaking, topological functors behave similarly to the forgetful functor $\mathsf{Top} \to \Set$ out of the category of all topological spaces. Still, the definitions involved are rather technical and the reader not familiar with this topic may skip this part.  

\begin{definition}\label{def:topolFunctor}
We call a functor $\func{F} \colon \var{C} \to \var{D}$ 
\begin{enumerate}
\item \emph{topological} if it is faithful and every $\func{F}$-structured source has an initial lift,
\item \emph{essentially topological} if it is topological up to categorical equivalence of $\var{C}$ and $\var{D}$.  
\end{enumerate}           
\end{definition}  
The need for this distinction arises because certain properties of topological functors, \emph{e.g.}, \emph{amnesticity} \cite[Definition 3.27]{AdamekHerrlichStrecker1990}, are not preserved under categorical equivalence (this issue is addressed in \cite{nlab:topological_concrete_category}).  

The following is our key observation for the last part of this subsection.

\begin{proposition}\label{prop:Bistopological}
The forgetful functor $\U \colon \StoneL \to \Stone$ is topological and the Boolean skeleton functor $\B \colon \var{A} \to \BA$ is essentially topological.   
\end{proposition}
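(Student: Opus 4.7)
The plan is to verify the two parts separately: first prove that $\U$ is topological by explicitly constructing initial lifts of structured sources, and then deduce that $\B$ is essentially topological by transporting this result through the dualities of Theorem~\ref{SPDuality} and Stone duality. Faithfulness of $\U$ is immediate, since a morphism in $\StoneL$ is already a continuous map satisfying an extra inequality, so forgetting the component $\val{v}$ does not lose any data.

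For the initial lift, suppose we are given a Stone space $X$ together with a $\U$-structured source $(f_i\colon X \to Y_i)_{i\in I}$, where $(Y_i,\val{w}_i)\in\StoneL$. I would define the candidate lift $(X,\val{v})$ by
\[
 \val{v}(x) = \bigvee_{i\in I}\val{w}_i(f_i(x)),
\]
where the join is taken in the finite lattice $\mathbb{S}(\alg{L})$ and is thus automatically well-defined regardless of the size of $I$. Three checks are then needed: (a) that $(X,\val{v})\in\StoneL$, which follows from the identity $\val{v}^{-1}(\alg{S}{\downarrow})=\bigcap_{i\in I}f_i^{-1}(\val{w}_i^{-1}(\alg{S}{\downarrow}))$ together with continuity of the $f_i$ and the closure condition on each $\val{w}_i$; (b) that every $f_i$ is a $\StoneL$-morphism $(X,\val{v})\to(Y_i,\val{w}_i)$, which holds by construction because $\val{w}_i(f_i(x))$ is one of the joinands; and (c) initiality, namely that for any $(Z,\val{u})\in\StoneL$ and continuous $h\colon Z\to X$ with the property that each $f_i\circ h$ is a $\StoneL$-morphism, $h$ itself lifts to a $\StoneL$-morphism $(Z,\val{u})\to(X,\val{v})$. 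The last point reduces to the chain $\val{v}(h(z))=\bigvee_i\val{w}_i(f_i(h(z)))\le\val{u}(z)$, where each term is bounded by $\val{u}(z)$ by hypothesis.

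For the second statement, I would invoke the natural isomorphism $\B\cong\Pi\circ\U\circ\Sigma_\alg{L}$ established as a corollary to Proposition~\ref{prop:HomeoBooleanSkeleton}. Combining Theorem~\ref{SPDuality} and Stone duality yields contravariant equivalences between $\var{A}$ and $\StoneL$, and between $\Stone$ and $\BA$, under which $\B$ corresponds, up to natural isomorphism, to the functor $\U^{\mathrm{op}}\colon\StoneL^{\mathrm{op}}\to\Stone^{\mathrm{op}}$. Since topologicality is self-dual (initial lifts of sources correspond to final lifts of sinks after passing to the opposite category) and stable under composition with categorical equivalences on either side, $\B$ is essentially topological.

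The main obstacle I expect lies in step (c): pinning down that the join-based definition of $\val{v}$ is genuinely the smallest structure making all $f_i$ into $\StoneL$-morphisms is precisely what forces initiality, and one must be careful here not to conflate this with other minimality conditions. For the $\B$-part, the only subtle point is the qualifier \emph{essentially}: strict topologicality entails amnesticity, which is not preserved by categorical equivalence, so one should not expect anything stronger than essential topologicality here.
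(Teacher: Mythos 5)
Your proposal is correct and follows essentially the same route as the paper: the initial lift is constructed via the pointwise join $\val{v}(x)=\bigvee_{i\in I}\val{w}_i(f_i(x))$ in the finite lattice $\mathbb{S}(\alg{L})$, with the same verifications of closedness, the morphism property of the $f_i$, and initiality, and the statement about $\B$ is then deduced exactly as in the paper from the natural isomorphism with the dual of $\U$ together with the self-duality of topological functors (the paper cites \cite[Theorem 21.9]{AdamekHerrlichStrecker1990} for this) and invariance under equivalence. Your closing remark on why only \emph{essential} topologicality can be expected (amnesticity is not equivalence-invariant) matches the paper's own discussion following Definition \ref{def:topolFunctor}.
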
 

\begin{proof}
We only need to show that $\U$ is topological, which immediately implies that $\B$ is essentially topological due to \cite[Theorem 21.9]{AdamekHerrlichStrecker1990} together with the fact that $\B$ is naturally isomorphic to the dual of $\U$. 

Of course $\U$ is faithful since it is the identity on morphisms. Now let $X\in \Stone$ and let $(f_i\colon X \to \U(X_i, \val{v}_i))_{i\in I}$ be a $\U$-structured source (i.e., a collection of continuous maps) indexed by a class $I$. We define $\val{v}\colon X \to \mathbb{S}(\alg{L})$ by
$$ \val{v}(x) = \bigvee_{i\in I} \val{v}_i(f_i(x)).$$
Note that this is well-defined, since $\mathbb{S}(\alg{L})$ is finite and that $(X,\val{v})$ is a member of $\StoneL$, since $\val{v}^{-1}(\alg{S}{\downarrow}) = \bigcap_{i\in I} f_i^{-1} (\val{v}_i^{-1}(\alg{S}{\downarrow}))$ is closed. Every $f_i$ is now also a morphism in $\StoneL$, which defines a lift of the source. To show that it is initial, assume there are $\StoneL$-morphisms $(g_i \colon (Y,\val{w}) \to (X_i, \val{v}_i))_{i\in I}$ and a continuous map $g\colon Y \to X$ with $f_i \circ g = g_i$. All we need to show is that $g$ defines a $\StoneL$-morphism $(Y,\val{w}) \to (X,\val{v})$. To see this simply note that 
$$ \val{v}(g(y)) = \bigvee_{i\in I} \val{v}_i\big(f_i(g(y))\big) = \bigvee_{i\in I}\val{v}_i(g_i(y)) \leq \val{w}(y),$$
which concludes the proof.      
\end{proof} 

We can now easily show the following. 

\begin{corollary}\label{cor:semiCharTopFunctor}
Let $\alg{M}$ be a finite lattice-based algebra. Then $\alg{M}$ is semi-primal if and only if there is an essentially topological functor $\mathfrak{s} \colon \variety{\alg{M}} \to \BA$ which satisfies
$$\M_{\alg{E}} \dashv \mathfrak{s} \dashv \M_{\alg{M}},$$
where $\alg{E} = \langle 0,1 \rangle$ is the smallest subalgebra of $\alg{M}$. 
\end{corollary}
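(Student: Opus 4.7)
The plan is to reduce the corollary to Theorem \ref{thm:adjunctionSemiCharact} by exploiting two standard facts about topological functors: every topological functor is faithful by definition, and both its left and right adjoints (when they exist) are full embeddings (see, e.g., \cite[21.12]{AdamekHerrlichStrecker1990}). Both properties are easily seen to be stable under replacement of the source and target by equivalent categories, so they pass to essentially topological functors as well.

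For the forward direction, assume $\alg{M}$ is semi-primal. Then by Subsections~\ref{subs:Boolpower} and \ref{subs:subalgebraadjunctions} we already have the adjoint chain $\M_{\alg{E}} \dashv \B \dashv \M_\alg{M}$, and by Proposition \ref{prop:Bistopological} the functor $\mathfrak{s} := \B$ is essentially topological. So this direction is immediate.

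For the converse, assume that some essentially topological $\mathfrak{s} \colon \variety{\alg{M}} \to \BA$ satisfies $\M_{\alg{E}} \dashv \mathfrak{s} \dashv \M_\alg{M}$. In order to invoke Theorem \ref{thm:adjunctionSemiCharact}, I need to check that $\mathfrak{s}$ is faithful and that $\M_\alg{M}$ is full. Faithfulness of $\mathfrak{s}$ is immediate since every (essentially) topological functor is faithful. For fullness of $\M_\alg{M}$, I use that the right adjoint of an (essentially) topological functor is a full embedding, so in particular $\M_\alg{M}$ is full. Theorem \ref{thm:adjunctionSemiCharact} then applies and yields that $\alg{M}$ is semi-primal.

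No step is expected to be a real obstacle: the whole argument is a bookkeeping exercise transferring properties through the categorical equivalence implicit in the word ``essentially.'' The only point that merits care is checking that the full-embedding-of-adjoints property really is preserved under categorical equivalence of the domain and codomain, but since both fullness and faithfulness of functors are categorical (equivalence-invariant) notions, this is routine.
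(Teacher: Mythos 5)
Your proposal is correct and follows essentially the same route as the paper: the forward direction cites Proposition \ref{prop:Bistopological} and the already-established chain $\M_{\alg{E}} \dashv \B \dashv \M_{\alg{M}}$, and the converse reduces to Theorem \ref{thm:adjunctionSemiCharact} via faithfulness of the (essentially) topological functor and fullness of its adjoints by \cite[Proposition 21.12]{AdamekHerrlichStrecker1990}. Your extra remark that these properties transfer across categorical equivalence is a harmless elaboration of a point the paper leaves implicit.
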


\begin{proof}
In the previous proposition we showed that if $\alg{M}$ is semi-primal, then $\B$ is essentially topological. 

Conversely, if such an essentially topological $\mathfrak{s}$ exists, it is faithful by definition and both its adjoints $\M_{\alg{M}}$ and $\M_{\alg{E}}$ are full by \cite[Proposition 21.12]{AdamekHerrlichStrecker1990}. Therefore, due to Theorem \ref{thm:adjunctionSemiCharact}, $\alg{M}$ is semi-primal. 
\end{proof}
    
In this section we gained an algebraic understanding of all the functors between $\var{A}$ and $\BA$ appearing on the right-hand side of Figure \ref{fig:adjunctions}. Furthermore, we now showed how properties of the Boolean skeleton functor $\B$ characterize semi-primality. In the next section we investigate how canonical extensions of algebras in $\var{A}$ behave under these functors. One of the main results is that the Boolean skeleton functor $\B$ may be used to identify canonical extensions of algebras in $\var{A}$.  

\section{Discrete duality and canonical extensions}\label{sec:canext}
In this section we describe a semi-primal discrete duality similar to the well-known discrete duality between $\Set$ and $\CABA$, the category of complete atomic Boolean algebras with complete homomorphisms. It can be obtained from the finite duality in a similar way to the one of Section \ref{sec:semi-primal duality}, except that now we lift it to the level of $\Ind(\SetL^\omega)$ and $\Pro(\var{A}^\omega)$. The members of the latter category are known to be precisely the \emph{canonical extensions} \cite{GehrkeJonsson2004} of members of $\var{A}$ (see \cite{DaveyPriestley2012}), and we will provide two new characterizations of this category (Corollary \ref{cor:ProAIPSL} and Theorem \ref{thm:ProACAA}). Lastly we show that, as in the primal case $\alg{L} = \alg{2}$, the topological duality from Section \ref{sec:semi-primal duality} can be connected to its discrete version via an analogue of the \emph{Stone-Čech compactification}.    

Our first goal is to identify $\Ind(\SetL^\omega)$. Although it may not be surprising, it will still take some work to prove that it can be identified with the following category.

\begin{definition}
The category $\SetL$ has objects of the form $(X,v)$ where $X\in \Set$ and $v\colon X\to \mathbb{S}(\alg{L})$ is an arbitrary map. A morphism $m\colon (X,v) \rightarrow (Y,w)$ is a map $X\to Y$ which always satisfies
$$ w(m(x)) \leq  v(x).$$   
\end{definition}     

\begin{remark}
In the context of fuzzy sets, Goguen \cite{Goguen1967, Goguen1974} initiated the study of such categories. This research was continued, \emph{e.g.}, in \cite{Barr1986, Walker2004}. In this remark we stick to the notation of \cite{Goguen1974}. Given a complete lattice $\mathcal{V}$, the category $\Set(\mathcal{V})$ of $\mathcal{V}$-fuzzy sets has objects $(X,A)$ where $A\colon X\to \mathcal{V}$. Morphisms $(X,A) \to (X',A')$ are maps $m\colon X\to Y$ which satisfy $A'(m(x)) \geq A(x)$ for all $x\in X$. In the context of fuzzy set theory, people were mainly interested in the case where $\mathcal{V} = [0,1]$. However, we retrieve $\SetL$ in the case where $\mathcal{V}$ is the order-dual of $\mathbb{S}(\alg{L})$. \hfill $\blacksquare$         
\end{remark}

Since we are interested in the $\Ind$-completion of $\SetL^\omega$, we will first discuss (filtered) colimits in this category.

\begin{lemma}\label{lem:coprodSetL}
The category $\SetL$ is cocomplete. The colimit $\mathrm{colim}_{i\in I}(X_i,v_i)$ of a filtered diagram $\big(f_{ij}\colon (X_i, v_i) \to (X_j, v_j)\mid i \leq j\big)$ is realized by $\big((\coprod_{i\in I}X_i){/}{\sim} , \bar{v}\big)$. Here, for $x_i\in X_i$ and $x_j\in X_j$,
$$ x_i \sim x_j \iff \exists k \geq i, j: f_{ik}(x_i) = f_{jk}(x_j)$$
and 
$$ \bar{v}([x_i]) = \bigwedge_{x_i\sim x_j \in X_j} v_j(x_j),$$
where $[x_i]$ is the equivalence class of $x_i$ with respect to $\sim$.   
\end{lemma}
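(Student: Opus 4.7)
My plan is to prove cocompleteness first by exhibiting coproducts and coequalizers directly, and then analyse the filtered case to derive the explicit formula.

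For coproducts, I would set $\coprod_{i \in I}(X_i, v_i) = (\coprod_i X_i, \tilde v)$ with $\tilde v$ defined componentwise by $\tilde v(x) = v_i(x)$ when $x \in X_i$. Each coprojection $\iota_i$ is a morphism in $\SetL$ since $\tilde v(\iota_i(x)) = v_i(x) \leq v_i(x)$, and the universal property on underlying sets lifts because any cocone $g_i \colon (X_i, v_i) \to (Y,w)$ satisfies $w(g_i(x)) \leq v_i(x) = \tilde v(\iota_i(x))$. For coequalizers of $f, g \colon (X, v) \to (Y, w)$, I would take the coequalizer $q \colon Y \to Y/{\approx}$ in $\Set$ (where $\approx$ is the smallest equivalence with $f(x) \approx g(x)$) and set $\bar w([y]) = \bigwedge_{y' \approx y} w(y')$. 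This is well defined because $\mathbb{S}(\alg{L})$ is a finite (hence complete) lattice. The map $q$ is an $\SetL$-morphism since $\bar w(q(y)) \leq w(y)$ by construction, and verifying the universal property follows the same pattern as in the filtered case below.

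Turning to filtered colimits, first I would verify that $\sim$ is an equivalence relation on $\coprod_i X_i$; reflexivity and symmetry are obvious, and transitivity uses the filteredness of $I$: if $f_{ik}(x_i) = f_{jk}(x_j)$ and $f_{jk'}(x_j) = f_{lk'}(x_l)$, pick some $m \geq k, k'$ in $I$ and apply the corresponding transition maps to conclude. Thus the quotient $\coprod_i X_i / {\sim}$ is the filtered colimit in $\Set$, realized by the coprojections $p_i \colon X_i \to \coprod X_i / {\sim}$. Next I would check that $p_i$ is a morphism $(X_i, v_i) \to (\coprod X_i / {\sim}, \bar v)$: since $x_i \sim x_i$ appears in the index set of the infimum defining $\bar v(p_i(x_i))$, we get $\bar v(p_i(x_i)) \leq v_i(x_i)$ immediately. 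It is also straightforward to verify that $p_i = p_j \circ f_{ij}$ (at the $\SetL$-level, not merely set-theoretically) from the defining relation of $\sim$ together with the cocone property $v_j \circ f_{ij} \leq v_i$ which morphisms in $\SetL$ must satisfy.

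For the universal property, given another cocone $(g_i \colon (X_i, v_i) \to (Y,w))_{i \in I}$, the underlying cocone in $\Set$ yields a unique map $\bar g \colon \coprod X_i/{\sim} \to Y$ with $\bar g \circ p_i = g_i$. The key step is to check that $\bar g$ is an $\SetL$-morphism, i.e.\ $w(\bar g([x_i])) \leq \bar v([x_i])$. But for every $x_j$ with $x_j \sim x_i$ we have $\bar g([x_i]) = g_j(x_j)$, and since $g_j$ is a morphism $w(g_j(x_j)) \leq v_j(x_j)$. Taking the infimum over all such $x_j$ gives $w(\bar g([x_i])) \leq \bar v([x_i])$ as required. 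Uniqueness of $\bar g$ among $\SetL$-morphisms follows from uniqueness at the level of $\Set$.

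I expect the only real subtlety is in the well-definedness and the verification that the defining $\bar v$-formula is the right one: one has to be careful to take the infimum over \emph{all} representatives $x_j$ with $x_j \sim x_i$ across all $j \in I$ (not just those with $j \geq i$), because otherwise the value would depend on the representative chosen. The filteredness of $I$ is what makes these two descriptions coincide up to inequality and guarantees $\bar v$ is well defined on equivalence classes. The rest of the proof is essentially bookkeeping around the $\Set$-theoretic filtered colimit.
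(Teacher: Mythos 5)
Your proposal is correct and takes essentially the same route as the paper: the substantive part (the filtered-colimit formula) is verified exactly as in the paper's proof, namely the canonical maps are morphisms because $x_i \sim x_i$ contributes to the meet defining $\bar{v}$, and the induced map out of any cocone is a morphism by bounding $w(\bar g([x_i]))$ by $v_j(x_j)$ for every representative $x_j \sim x_i$ and taking the meet. The only difference is that the paper dispatches general cocompleteness by citing the analogous argument of Walker, while you construct coproducts and coequalizers explicitly (using that $\mathbb{S}(\alg{L})$ is a finite, hence complete, lattice), which is a perfectly valid and slightly more self-contained way to cover that part.
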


\begin{proof}
The proof that $\SetL$ is cocomplete is completely analogous to the one in \cite{Walker2004}. 
For filtered colimits, on the underlying level of $\Set$ we know that $X :=\coprod_{i\in I}(X_i)/{\sim}$ with the canonical inclusions $\rho_i\colon X_i \to X$ is the colimit of the diagram. To see that all the $\rho_i$ are morphisms in $\SetL$ note 
$$ \bar{v}(\rho_i(x_i)) = \bigwedge_{x_i \sim x_j\in X_j} v_j(x_j) \leq v_i(x_i).$$
Given another cocone $\gamma_i \colon (X_i, v_i) \to (Z,u)$, the unique map $g\colon X\to Z$ is a morphism in $\SetL$ since, for $x_i \in X_i$ and $x_i \sim x_j \in X_j$ we have $u\big(g(\rho_j(x_j))\big) = u(\gamma_j(x_j)) \leq v_j(x_j)$ and thus 
$$ u\big(g([x_i])\big) \leq \bigwedge_{x_i \sim x_j\in X_j} v_j(x_j) = \bar{v}([x_i]),$$
which concludes the proof.     
\end{proof}

We will also make use of the following general result. 

\begin{lemma}\label{lem:adjointFinPresentable}
Let $\func{F} \colon \var{C} \to \var{D}$ be a functor between categories $\var{C}$ and $\var{D}$ which both admit filtered colimits. If $\func{F}$ has a right-adjoint $\func{G}$ which preserves filtered colimits, then $\func{F}$ preserves finitely presentable objects.  
\end{lemma}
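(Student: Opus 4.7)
The plan is to unfold the definition of finitely presentable and push everything through the adjunction. Recall that an object $C \in \var{C}$ is finitely presentable precisely when the covariant hom-functor $\var{C}(C, -) \colon \var{C} \to \Set$ preserves filtered colimits. So, given a finitely presentable $C \in \var{C}$, I want to prove that $\var{D}(\func{F}(C), -) \colon \var{D} \to \Set$ preserves filtered colimits.

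Let $(D_i)_{i \in I}$ be a filtered diagram in $\var{D}$ with colimit $D = \mathrm{colim}_{i\in I} D_i$, which exists by hypothesis on $\var{D}$. The strategy is to assemble the chain of natural isomorphisms
$$
\var{D}(\func{F}(C), \mathrm{colim}_{i\in I} D_i)
\;\cong\; \var{C}(C, \func{G}(\mathrm{colim}_{i\in I} D_i))
\;\cong\; \var{C}(C, \mathrm{colim}_{i\in I} \func{G}(D_i))
\;\cong\; \mathrm{colim}_{i\in I}\, \var{C}(C, \func{G}(D_i))
\;\cong\; \mathrm{colim}_{i\in I}\, \var{D}(\func{F}(C), D_i),
$$
where the first and last isomorphisms are the hom-set adjunction $\func{F} \dashv \func{G}$, the second uses the hypothesis that $\func{G}$ preserves filtered colimits, and the third uses that $C$ is finitely presentable in $\var{C}$. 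One should also check that the resulting composite isomorphism is induced by the canonical comparison map out of the colimit, which follows from naturality of all four isomorphisms in the diagram variable.

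I do not expect any real obstacle here; the argument is formal once one has the correct characterization of finitely presentable objects in hand. The only thing to keep careful track of is that the isomorphism $\var{C}(C, \func{G}(-)) \cong \var{D}(\func{F}(C), -)$ is a natural isomorphism of functors $\var{D} \to \Set$, so preservation of filtered colimits transfers between the two. This is immediate from the standard fact that the hom-set bijections of an adjunction are natural in both variables.
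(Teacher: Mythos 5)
Your proposal is correct and follows essentially the same argument as the paper: both chain together the adjunction isomorphism, the hypothesis that $\func{G}$ preserves filtered colimits, and the finite presentability of $C$, with the paper merely compressing the chain into fewer displayed steps. Your extra remark about the composite isomorphism being induced by the canonical comparison map is a reasonable precision but does not change the substance.
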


\begin{proof}
Let $C\in \var{C}$ be finitely presentable. We want to show that $F(C)$ is finitely presentable in $\var{D}$. Let $\mathrm{colim}_i D_i$ be a filtered colimit in $\var{D}$. Then 
$$ \var{D}\big(\func{F}(C), \mathrm{colim}_i D_i\big) \cong \mathrm{colim}_i\var{C}\big(C, \func{G}(D_i)\big) \cong \mathrm{colim}_i \var{D}\big(\func{F}(C), D_i\big),$$
where the first isomorphism comes from the fact that $\func{G}$ preserves filtered colimits and $C$ is finitely presentable.  
\end{proof}

\begin{corollary}
If $X$ is a finite set, then $(X,v)$ is finitely presentable in $\SetL$ for every $v\colon X\to \mathbb{S}(\alg{L})$.  
\end{corollary}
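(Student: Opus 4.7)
The strategy is to combine Lemma \ref{lem:coprodSetL} and Lemma \ref{lem:adjointFinPresentable} after first reducing to the singleton case via a coproduct decomposition.

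First I would observe that for any $(X,v)\in\SetL$ one has the isomorphism $(X,v)\cong\coprod_{x\in X}(\{x\},v(x))$; this follows from the cocompleteness established in Lemma~\ref{lem:coprodSetL} together with the fact that the $\SetL$-morphism condition $w(m(x))\leq v(x)$ is checked pointwise (so the universal property of the coproduct in $\Set$ lifts directly). Since $X$ is finite and finite colimits of finitely presentable objects are finitely presentable in any category with filtered colimits, it suffices to show that every singleton object $(\{\ast\},\alg{S})$ with $\alg{S}\in\mathbb{S}(\alg{L})$ is finitely presentable in $\SetL$.

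For the singleton case, I would introduce, for each $\alg{S}\leq\alg{L}$, the ``constant-$\alg{S}$'' functor $\V^\alg{S}\colon\Set\to\SetL$ sending a set $Y$ to $(Y,y\mapsto\alg{S})$, and verify by direct inspection (exactly as for the functors $\V^\alg{S}$ in Subsection~\ref{subs:subalgebraadjunctions}) that it has a right adjoint
$$\C^\alg{S}\colon(Y,w)\longmapsto\{y\in Y : w(y)\leq\alg{S}\}.$$
The crucial claim is then that $\C^\alg{S}$ preserves filtered colimits. Unwinding the explicit description of the colimit in Lemma~\ref{lem:coprodSetL}, this reduces to showing: whenever $\bar{v}([x_i])\leq\alg{S}$ for an element of a filtered colimit $\mathrm{colim}_k(X_k,v_k)$, there already exists $j\geq i$ with $v_j(f_{ij}(x_i))\leq\alg{S}$. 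Given this, Lemma~\ref{lem:adjointFinPresentable} applied to $\V^\alg{S}\dashv\C^\alg{S}$ immediately yields that $\V^\alg{S}$ preserves finitely presentable objects, so $(\{\ast\},\alg{S})=\V^\alg{S}(\{\ast\})$ is finitely presentable in $\SetL$.

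The main obstacle is the stabilization step: a priori $\bar{v}([x_i])$ is an infimum over the (possibly infinite) class of equivalent representatives across the whole diagram, and such an infimum need not be realized by any single representative. The key observation that resolves this is that the values $v_j(f_{ij}(x_i))$ for $j\geq i$ form a monotonically non-increasing family in the finite lattice $\mathbb{S}(\alg{L})$ (because each connecting map $f_{jk}$ is an $\SetL$-morphism), and every representative of $[x_i]$ is dominated by some such $v_j(f_{ij}(x_i))$ after passing to a common upper bound in the directed index set. Hence $\bar{v}([x_i])=\bigwedge_{j\geq i}v_j(f_{ij}(x_i))$, and finiteness of $\mathbb{S}(\alg{L})$ forces this infimum to be attained at some finite stage. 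This finiteness is the essential lattice-theoretic ingredient making filtered colimits interact well with subalgebra valuations.
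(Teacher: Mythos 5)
Your proof is correct and takes essentially the same route as the paper: decompose $(X,v)$ into a finite coproduct of singletons, realize each singleton as $\V^{\alg{S}}$ applied to a finite set, and apply Lemma \ref{lem:adjointFinPresentable} to the adjunction $\V^{\alg{S}} \dashv \C^{\alg{S}}$ after checking that $\C^{\alg{S}}$ preserves the filtered colimits of Lemma \ref{lem:coprodSetL}; your stabilization argument via finiteness of $\mathbb{S}(\alg{L})$ even makes explicit a step the paper leaves implicit. One minor wording slip: each representative's value \emph{dominates} (rather than is dominated by) some later value $v_j(f_{ij}(x_i))$, which is the inequality your identification $\bar{v}([x_i]) = \bigwedge_{j\geq i} v_j(f_{ij}(x_i))$ actually requires.
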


\begin{proof}
Let $X = \{ x_1, \dots ,x_n \}$ and let $v(x_i) = \alg{S_i}$. Then we can clearly identify 
$$ (X,v) \cong \coprod_{1\leq i \leq n} (\{ x_i \}, v^{\alg{S_i}}),$$
where $v^\alg{S_i}(x_i) = \alg{S_i}$.
Since filtered colimits commute with finite limits in $\Set$, it now suffices to show that all $(\{ x_i \}, v^{\alg{S_i}})$ are finitely presentable. Just like in Subsection~\ref{subs:subalgebraadjunctions} we can define the adjunction $\V^{\alg{S}} \dashv \C^{\alg{S}}$ between $\SetL$ and $\Set$ for every subalgebra $\alg{S} \leq \alg{L}$. By Lemma \ref{lem:adjointFinPresentable} it now suffices to show that $\C^{\alg{S}}$ preserves filtered colimits. So let $(X,\bar{v})$ be a filtered colimit as in Lemma \ref{lem:coprodSetL}. We know that $\C^{\alg{S}}(X) = \{ [x_i] \mid \exists x_i \sim x_j \in X_j, v_j(x_j) \leq \alg{S} \}$. Therefore, for all $[x_i] \in \C^{\alg{S}}$ we can choose representatives with $x_i \in \C^{\alg{S}}(X_i, v_i)$. This yields a bijection between $\C^{\alg{S}}(X)$ and $\mathrm{colim} \C^{\alg{S}}(X_i,v_i)$.       
\end{proof}

We now have everything at hand to easily prove the following. 

\begin{theorem}\label{thm:IndSetL}
$\Ind(\SetL^\omega)$ is categorically equivalent to $\SetL$. 
\end{theorem}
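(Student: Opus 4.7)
The plan is to imitate the structure used for Theorem \ref{thm:ProSetL=StoneL}: identify $\SetL$ as the free completion of $\SetL^\omega$ under filtered colimits by checking the two defining properties of an Ind-completion.

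First I would apply the universal property of $\Ind(\SetL^\omega)$. Since $\SetL$ is cocomplete by Lemma \ref{lem:coprodSetL}, the natural inclusion $\iota \colon \SetL^\omega \hookrightarrow \SetL$ admits an (essentially unique) finitary extension $\hat{\iota}\colon \Ind(\SetL^\omega)\to \SetL$. It then remains to show that $\hat{\iota}$ is an equivalence, which decomposes into (a) fully faithfulness and (b) essential surjectivity.

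For (a), the standard criterion (the Ind-version of \cite[Theorem VI.1.8]{Johnstone1982}) says that $\hat{\iota}$ is fully faithful as soon as $\iota$ sends every object of $\SetL^\omega$ to a finitely presentable object of $\SetL$. This is exactly the corollary established immediately before the theorem statement, so (a) is immediate.

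For (b), I would show directly that every object of $\SetL$ is a filtered colimit of objects in $\SetL^\omega$. Given $(X,v) \in \SetL$, let $\mathcal{F}$ denote the directed poset of finite subsets $F \subseteq X$ ordered by inclusion. For $F \subseteq F'$, the inclusion $F \hookrightarrow F'$ is a morphism $(F,v{\mid}_F)\to (F',v{\mid}_{F'})$ in $\SetL^\omega$ since $v{\mid}_{F'}(x) = v(x) = v{\mid}_F(x)$ for $x\in F$. Applying the filtered-colimit description of Lemma \ref{lem:coprodSetL}, the underlying set of the colimit becomes $X$ (distinct elements of $X$ are never identified, since only inclusion maps appear), and the induced valuation is $\bar{v}(x) = \bigwedge_{x\in F} v{\mid}_F(x) = v(x)$. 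Hence $(X,v) \cong \mathrm{colim}_{F\in\mathcal{F}}(F,v{\mid}_F)$ in $\SetL$, which yields essential surjectivity.

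None of the steps look like they will pose a serious obstacle; the only point to be careful with is the computation of $\bar{v}$ via the formula from Lemma \ref{lem:coprodSetL}, which specializes cleanly because all transition maps in the diagram are inclusions that preserve $v$ on the nose.
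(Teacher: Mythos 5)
Your proposal is correct and follows essentially the same route as the paper: extend the inclusion $\iota\colon \SetL^\omega \hookrightarrow \SetL$ finitarily using cocompleteness (Lemma \ref{lem:coprodSetL}), deduce full faithfulness from the corollary that finite objects are finitely presentable, and obtain essential surjectivity by exhibiting each $(X,v)$ as the filtered colimit of its finite subsets with restricted valuation. Your explicit verification of $\bar{v} = v$ via the colimit formula is a detail the paper leaves implicit ("just like in $\Set$"), but it is the same argument.
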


\begin{proof}
Since $\SetL$ is cocomplete, the inclusion $\iota\colon \SetL^\omega \to \SetL$ has a unique finitary extension $\hat{\iota}\colon \Ind(\SetL^\omega) \to \SetL$. Since $\iota$ is fully faithful and, by the above corollary, maps all objects to finitely presentable objects in $\SetL$, this extension is also fully faithful. To see that it is essentially surjective note that, just like in $\Set$, every member of $\SetL$ is the filtered colimit of its finite subsets. 
\end{proof}

We now take a closer look at the category $\Pro(\var{A}^\omega)$. It is well-known that it consists of the \emph{canonical extensions} \cite{GehrkeJonsson2004} of algebras in $\var{A}$. In \cite{DaveyPriestley2012} a description of these canonical extensions as topological algebras can be found. But, as in the case of complete atomic Boolean algebras $\CABA \simeq \mathbb{I}\mathbb{P}(\alg{2})$, this need not be the only description. In the following we apply results of Section \ref{sec:adcuntions} to find two easy alternatives. The first one is in terms of (arbitrary) products of subalgebras of $\alg{L}$ with complete homomorphisms. 

\begin{definition}
Let $\hat{\var{A}}$ be the category with algebras from $\mathbb{I}\mathbb{P}\mathbb{S}(\alg{L})$ as objects and complete homomorphisms as morphisms. 
\end{definition}  

We can essentially repeat our proof of the finite duality from Corollary \ref{cor:finiteeq}, once we prove the following result analogous to Proposition \ref{prop:homsareprojections}.

\begin{proposition}
Let $\alg{A} = \prod_{i\in I}\alg{S_i} \in \hat{\var{A}}$. Then the complete homomorphisms $\alg{A} \to \alg{L}$ are precisely the projections (followed by inclusions) in each component.  
\end{proposition}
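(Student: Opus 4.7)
The plan is to mimic the finite-case argument of Proposition \ref{prop:homsareprojections}, but now to pass through the Boolean skeleton where completeness of $h$ becomes the essential ingredient. The key observation is that the atoms of $\B(\alg{A})$, combined with completeness, force $\B h$ to be evaluation at a single coordinate, and the terms $T_\ell$ then rigidly determine $h$ itself.

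First I would identify the Boolean skeleton of $\alg{A} = \prod_{i\in I}\alg{S_i}$. Since each $\alg{S_i}\leq\alg{L}$ contains $\{0,1\}$, we have $\B(\alg{S_i}) = \{0,1\}$. An element $(a_i) \in \prod_i \alg{S_i}$ satisfies $T_1((a_i)) = (a_i)$ component-wise, so directly from the definition $\B(\prod_i\alg{S_i}) = \prod_i\B(\alg{S_i}) \cong \alg{2}^I \cong \mathcal{P}(I)$. Moreover, because joins in the categorical product $\prod_i\alg{S_i}$ are computed component-wise and a component-wise join of $\{0,1\}$-tuples is again a $\{0,1\}$-tuple, arbitrary joins of $\B(\alg{A})$-elements exist in $\alg{A}$ and lie in $\B(\alg{A})$; hence joins in $\B(\alg{A})$ agree with those in $\alg{A}$.

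Next, since $h$ is complete and the $T_\ell$ are term-definable (Proposition \ref{SP-char-Ts}), its restriction $\B h \colon \mathcal{P}(I)\to\alg{2}$ is a complete Boolean homomorphism. By the discrete duality between $\Set$ and $\CABA$ (or, concretely: from $\B h(I) = \bigvee_{i\in I}\B h(\{i\}) = 1$ some $\B h(\{i_0\}) = 1$, and the disjointness $\{i\}\wedge\{i_0\} = \varnothing$ for $i\neq i_0$ forces $\B h(\{i\}) = 0$), there is a unique $i_0 \in I$ such that $\B h$ is evaluation at $i_0$, i.e., $\B h(X) = 1 \Leftrightarrow i_0 \in X$.

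Finally, I would use this to recover $h$ on all of $\alg{A}$. For any $a = (a_i)\in \alg{A}$ and $\ell \in L$, the element $T_\ell(a)\in\B(\alg{A})$ corresponds under the identification above to the subset $\{i \in I \mid a_i = \ell\}\subseteq I$. Therefore
$$ T_\ell(h(a)) \;=\; h(T_\ell(a)) \;=\; (\B h)(T_\ell(a)) \;=\; \begin{cases} 1 & \text{if } a_{i_0} = \ell, \\ 0 & \text{otherwise.} \end{cases}$$
Since $T_\ell$ on $\alg{L}$ is the characteristic function of $\{\ell\}$, this forces $h(a) = a_{i_0}$, so $h = \pi_{i_0}$, which factors as the projection followed by the inclusion $\alg{S_{i_0}}\hookrightarrow\alg{L}$. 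The main obstacle is really the verification that $\B h$ is complete, i.e., that joins of Boolean skeleton elements agree in $\B(\alg{A})$ and in $\alg{A}$; this is the one place where the specific product structure $\alg{A} = \prod_i \alg{S_i}$ (rather than an arbitrary algebra) is used decisively.
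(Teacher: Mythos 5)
Your proof is correct and takes essentially the same route as the paper: restrict $h$ to the Boolean skeleton $\B(\alg{A})\cong\alg{2}^I$, use completeness to see that the restriction is evaluation at a single coordinate, and then recover $h$ via the terms $T_\ell$ (the paper delegates this last step to the bijection of Proposition \ref{prop:HomeoBooleanSkeleton} instead of re-deriving it). Your explicit check that arbitrary joins of skeleton elements in $\prod_{i\in I}\alg{S_i}$ are computed componentwise and stay in $\B(\alg{A})$, so that the restriction of a complete $h$ really is a complete Boolean homomorphism, makes precise a point the paper's proof passes over silently.
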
  

\begin{proof}
By Proposition \ref{prop:HomeoBooleanSkeleton} there is a bijection between $\var{A}(\alg{A},\alg{L})$ and $\BA(\B(\alg{A}),\alg{2})$ given by $h\mapsto h{\mid}_{\B(\alg{A})}$. In particular, if $h$ is complete, then so is its restriction. Since $\B(\alg{A}) = \alg{2}^I$, the only complete homomorphisms $\B(\alg{A})\to \alg{2}$ are the projections, and they are the restrictions of the respective projections $\alg{A}\to \alg{L}$.  
\end{proof}

\begin{corollary}\label{cor:ProAIPSL}
$\Pro(\mathcal{A}^\omega)$ is categorically equivalent to $\hat{\var{A}}$
\end{corollary}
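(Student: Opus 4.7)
The plan is to combine the lifted version of the finite duality from Corollary \ref{cor:finiteeq} with a direct dual equivalence between $\hat{\var{A}}$ and $\SetL$, which together identify both with $\SetL^{op}$ and hence with each other.

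First, applying Lemma \ref{DualityIndC-ProD} to the finite dual equivalence $\SetL^\omega \simeq (\var{A}^\omega)^{op}$ of Corollary \ref{cor:finiteeq} yields a dual equivalence $\Ind(\SetL^\omega) \simeq \Pro(\var{A}^\omega)^{op}$. Combined with Theorem \ref{thm:IndSetL}, this gives a dual equivalence $\Pro(\var{A}^\omega) \simeq \SetL^{op}$. Hence it suffices to exhibit a dual equivalence $\hat{\var{A}} \simeq \SetL^{op}$.

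For this, I would essentially repeat the argument of Corollary \ref{cor:finiteeq} after extending the functors $\Sigma_\alg{L}$ and $\Pi_\alg{L}$ to the infinite setting. On objects, define $\hat{\Sigma}_\alg{L}\colon \hat{\var{A}} \to \SetL^{op}$ by sending $\alg{A}$ to $(\hat{\var{A}}(\alg{A},\alg{L}), \val{im})$, and define $\hat{\Pi}_\alg{L}\colon \SetL^{op} \to \hat{\var{A}}$ by sending $(X,v)$ to $\prod_{x\in X} v(x)$, which lies in $\mathbb{I}\mathbb{P}\mathbb{S}(\alg{L}) = \hat{\var{A}}$; both act on morphisms by composition. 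By the preceding proposition, when $\alg{A} = \prod_{i\in I}\alg{S_i}$ we have $\hat{\Sigma}_\alg{L}(\alg{A}) = (\{\pr_i \mid i\in I\}, \val{im})$ with $\val{im}(\pr_i) = \alg{S_i}$, and hence $\hat{\Pi}_\alg{L}\hat{\Sigma}_\alg{L}(\alg{A}) \cong \prod_{i\in I}\alg{S_i} \cong \alg{A}$. Dually, $\hat{\Sigma}_\alg{L}\hat{\Pi}_\alg{L}(X,v)\cong(X,v)$ follows from the same proposition applied to $\prod_{x\in X}v(x)$.

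The main obstacle will be verifying the bijective correspondence on morphisms, i.e., fullness and faithfulness of $\hat{\Pi}_\alg{L}$. Given a complete homomorphism $f\colon \prod_{x\in X}v(x) \to \prod_{y\in Y}w(y)$, for each $y\in Y$ the composition $\pi_y \circ f$ is again complete, and post-composing with the inclusion $w(y)\hookrightarrow\alg{L}$ yields a complete homomorphism $\prod_{x\in X}v(x) \to \alg{L}$. By the preceding proposition, this must be a projection to some coordinate $n(y)\in X$, whose image $v(n(y))$ is therefore contained in $w(y)$. The resulting map $n\colon Y\to X$ is then a $\SetL$-morphism $(Y,w)\to(X,v)$, i.e., a $\SetL^{op}$-morphism $(X,v)\to(Y,w)$, and by construction $\hat{\Pi}_\alg{L} n = f$; since $n(y)$ is uniquely determined by $\pi_y \circ f$, faithfulness follows as well.
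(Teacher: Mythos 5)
Your proposal is correct and takes essentially the same route as the paper: Lemma \ref{DualityIndC-ProD} applied to the finite duality of Corollary \ref{cor:finiteeq}, together with Theorem \ref{thm:IndSetL}, reduces the statement to a dual equivalence between $\hat{\var{A}}$ and $\SetL$, which you then establish exactly as in Corollary \ref{cor:finiteeq} using the preceding proposition that complete homomorphisms out of a product of subalgebras are precisely the projections. The only difference is that you spell out the object and morphism correspondences that the paper dismisses as ``completely analogous''.
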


\begin{proof}
By Theorem \ref{thm:IndSetL} it suffices to show that $\SetL$ is dually equivalent to $\hat{\var{A}}$. This is done completely analogous to the proof of Corollary \ref{cor:finiteeq}.   
\end{proof}  

The second description of $\Pro(\var{A}^\omega)$ is in terms of the Boolean skeleton. 

\begin{definition}
The category $\CAA$ has as objects algebras $\alg{A}\in\var{A}$ which have a complete lattice-reduct and which satisfy $\B(\alg{A}) \in \CABA$. The morphisms in $\CAA$ are the complete homomorphisms. 
\end{definition} 

\begin{theorem}\label{thm:ProACAA}
$\Pro(\var{A}^\omega)$ is categorically equivalent to $\CAA$.  
\end{theorem}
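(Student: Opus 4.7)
The strategy is to combine Corollary \ref{cor:ProAIPSL}, which gives $\Pro(\var{A}^\omega) \simeq \hat{\var{A}}$, with a direct equivalence $\hat{\var{A}} \simeq \CAA$, thereby reducing the theorem to the latter. Since both categories take complete homomorphisms as morphisms, the equivalence is automatic at the morphism level once the object classes are shown to coincide up to isomorphism. The inclusion $\hat{\var{A}} \subseteq \CAA$ is routine: any $\alg{A} = \prod_{i \in I} \alg{S}_i$ has a complete lattice reduct (as a product of finite lattices), and since $T_1$ acts coordinatewise we compute $\B(\alg{A}) = \prod_i \B(\alg{S}_i) \cong \alg{2}^I$, which lies in $\CABA$.

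For the nontrivial inclusion $\CAA \subseteq \hat{\var{A}}$, I would take $\alg{A} \in \CAA$ and let $I$ index the atoms of $\B(\alg{A})$, so that $\B(\alg{A}) \cong \alg{2}^I$. Since $\M$ is a right adjoint (Theorem \ref{thm:MadjointB}) and hence preserves products, and since $\M(\alg{2}) \cong \alg{L}$ (Lemma \ref{lem:propertiesofM}), the embedding of Proposition \ref{prop:MandB} specializes to
$$\mathcal{T}\colon \alg{A} \hookrightarrow \M(\B(\alg{A})) \cong \M(\alg{2}^I) \cong \alg{L}^I.$$
Writing $\alg{S}_i \leq \alg{L}$ for the image of $\alg{A}$ under composition with the $i$-th projection, $\alg{A}$ sits inside $\prod_i \alg{S}_i$ as a subdirect product, and it remains to show that this subdirect inclusion is actually an equality.

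To exhibit surjectivity, given $(s_i)_{i \in I} \in \prod_i \alg{S}_i$, I would choose $a_i \in \alg{A}$ with $\pi_i(a_i) = s_i$ and form
$$a = \bigvee_{i \in I} (a_i \wedge e_i) \in \alg{A},$$
which exists by completeness of the lattice reduct, using that the atoms $e_i \in \B(\alg{A}) \subseteq \alg{A}$ belong to $\alg{A}$. Under the embedding into $\alg{L}^I$, each joinand $a_i \wedge e_i$ has value $s_i$ at coordinate $i$ and vanishes elsewhere, so $(s_i)_i$ is the natural candidate for $\mathcal{T}(a)$.

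The main obstacle is to validate this candidate, i.e., to verify $\pi_j(a) = s_j$ for every $j$. The difficulty is that the terms $T_\ell$ need not commute with arbitrary joins, which rules out a direct coordinatewise computation of $T_{s_j}(a)$. I plan to circumvent this by appealing to the factor congruence decomposition: since $\var{A}$ is arithmetical, each element of $\B(\alg{A})$ determines a complementary pair of factor congruences on $\alg{A}$, and the atomic decomposition $\B(\alg{A}) \cong \alg{2}^I$ should lift, using completeness of $\alg{A}$, to an algebraic decomposition $\alg{A} \cong \prod_i \alg{A}_{e_i}$ extending the familiar finite case to arbitrary $I$. Each factor $\alg{A}_{e_i}$ has Boolean skeleton $\alg{2}$, so by Proposition \ref{prop:MandB} it embeds into $\M(\alg{2}) = \alg{L}$ and is identified with the subalgebra $\alg{S}_i$, yielding the desired $\alg{A} \cong \prod_i \alg{S}_i \in \hat{\var{A}}$.
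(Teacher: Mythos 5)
Your reduction via Corollary \ref{cor:ProAIPSL}, the easy inclusion $\hat{\var{A}}\subseteq\CAA$, the embedding $\alg{A}\hookrightarrow\M(\B(\alg{A}))\cong\alg{L}^I$, and the candidate element $a=\bigvee_i(a_i\wedge e_i)$ all match the paper's proof. The problem is the step where you abandon the direct verification of $\pi_j(a)=s_j$ and instead appeal to a factor-congruence decomposition: the claim that the atomic decomposition of $\B(\alg{A})$ ``should lift, using completeness of $\alg{A}$, to an algebraic decomposition $\alg{A}\cong\prod_i\alg{A}_{e_i}$'' for an arbitrary index set $I$ is exactly the surjectivity statement you are trying to prove, restated in congruence language. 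The finite Fraser--Horn argument does not extend to infinite $I$ by itself: injectivity of the canonical map $\alg{A}\to\prod_i\alg{A}_{e_i}$ is fine, but surjectivity is precisely the issue, and ``completeness of the lattice reduct'' alone does not deliver it. Indeed, if the embedding $\eta_\alg{A}$ were only an order-embedding preserving finite joins, then $\eta_\alg{A}\bigl(\bigvee_i(a_i\wedge e_i)\bigr)$ would merely dominate the intended tuple $\alpha$ coordinatewise, with no reason for equality. So as written the proposal is circular at its crucial point.

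The missing idea, which the paper supplies, is that the adjunction $\B\dashv\M$ restricts to an adjunction between $\CAA$ and $\CABA$, so that the unit $\eta_\alg{A}\colon\alg{A}\hookrightarrow\M(\B(\alg{A}))$ is a \emph{complete} embedding. Your stated obstacle is also slightly misdirected: one never needs the terms $T_\ell$ to commute with arbitrary joins; one needs the embedding itself to preserve arbitrary joins. Granting that, the verification is a one-line coordinatewise computation, since joins in $\alg{L}^I$ are computed coordinatewise and each projection $\pr_j$ is complete:
$$\pr_j\bigl(\eta_\alg{A}(a)\bigr)=\bigvee_{i\in I}\Bigl(\pr_j\bigl(\eta_\alg{A}(a_i)\bigr)\wedge\pr_j\bigl(\eta_\alg{A}(e_i)\bigr)\Bigr)=\pr_j\bigl(\eta_\alg{A}(a_j)\bigr)=s_j,$$
because $\eta_\alg{A}(e_i)$ is the characteristic tuple of the coordinate $i$. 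If you want to keep your factor-congruence formulation, you would have to prove the infinite lifting claim, and any such proof will in effect reprove this completeness property of $\eta_\alg{A}$; it is cleaner to invoke it directly.
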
    

\begin{proof}
Using Corollary \ref{cor:ProAIPSL} we show that $\CAA$ is categorically equivalent to $\hat{\var{A}}$. Clearly there is a fully faithful inclusion functor $\hat{\var{A}} \hookrightarrow \CAA$. So it suffices to show that this functor is essentially surjective. In other words, we want to show that every object of $\CAA$ is isomorphic to a product of subalgebras of $\alg{L}$. 

So consider $\alg{A} \in \CAA$. Since the adjunction $\B \dashv \M$ restricts to $\CABA$ and $\CAA$, we can use Corollary \ref{cor:embedAinMBA} to get a \emph{complete} embedding $\eta_\alg{A}\colon\alg{A}\hookrightarrow \M(\B(\alg{A}))$. Since $\B(\alg{A})$ is in $\CABA$ it is isomorphic to $\alg{2}^I$ for some index set $I$. Thus $\M(\B(\alg{A}))\cong \M(\alg{2^I}) \cong \alg{L}^I$. We show that $\alg{A}$ is isomorphic to the direct product of subalgebras $\prod_{i\in I} \pr_i(\eta_\alg{A}(A))$. For this it suffices to show that the injective homomorphism $\eta_\alg{A}$ maps onto it. So let $\alpha$ be an element of this product. For each $i\in I$ choose $a_i \in \alg{A}$ such that $\pr_i(\eta_\alg{A}(a_i)) = \alpha(i)$. Since $\alg{2}^I \cong \B(\alg{A}) \subseteq \alg{A}$ all atoms $b_i \in \alg{2}^I$ (defined by $b_i(j) = 1$ iff $j = i$) can be considered as members of $\alg{A}$. Now define 
$$ a = \bigvee \{ a_i \wedge b_i \mid i\in I \}.$$ 
Since $\alg{A}$ is complete, we have $a\in \alg{A}$. And since $\eta_\alg{A}$ is a complete homomorphism we have $\eta_\alg{A}(a) = \alpha$ (because $\pr_i(\eta_\alg{A}(a)) = \eta_\alg{A}(a_i) = \alpha(i)$).        
\end{proof}

With the results from this section thus far, it is clear that the chains of adjunctions from Section \ref{sec:adcuntions} (summarized in Figure \ref{fig:adjunctions}) have their discrete counterparts, equally defined, between $\SetL$ and $\Set$ and $\CAA$ and $\CABA$, respectively. To make the connection between Figure \ref{fig:adjunctions} and its discrete counterpart, we finish this section by connecting the respective dualities as indicated in Figure \ref{fig:compactification}.  
\begin{figure}[ht]
$$
\xymatrix@C=100pt@R=50pt{ 
\StoneL \ar@/^/[r] 
\ar@{<-}@/_8pt/[d]_{\beta_\alg{L}}^{\phantom{'}\dashv}
\ar@/^8pt/[d]^{(-)^\flat}
& 
\var{A} \ar@/^/[l] 
\ar@{<-}@/_8pt/[d]_{\iota_{c}}
\ar@/^8pt/[d]^{(-)^\delta}_{\vdash\phantom{'}}
\\
\SetL \ar@/^/[r]
&
\CAA \ar@/^/[l] 
}
$$ 
\caption{Compactification and canonical extension.}
\label{fig:compactification}
\end{figure}
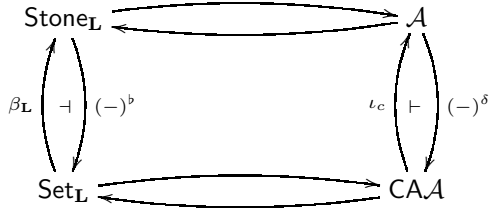  

Here $(-)^\flat\colon \StoneL\to \SetL$ is the forgetful functor with respect to topology and $\iota_c\colon \CAA\to \var{A}$ is the obvious inclusion functor (note that both these functors are not full). The functor $(-)^\delta$ takes an algebra to its canonical extension. In the primal case $\alg{L} = \alg{2}$, it is well-known that $\beta_\alg{2} =: \beta$ is the \emph{Stone-Čech compactification} (see, \emph{e.g.}, \cite[Section IV.2]{Johnstone1982}). This has been generalized to the \emph{Bohr compactification} in a (much broader) framework which includes ours in \cite{DaveyHaviar2017}. However, since things are particularly simple in our setting, we directly show how to define $\beta_\alg{L}$.

Given $(X,v)\in \SetL$, there is a natural way to extend $v$ to the Stone-Čech compactification $\beta(X)$ of $X$. Indeed, since $v\colon X\to \mathbb{S}(\alg{L})$ can be thought of as a continuous map between discrete spaces, by the universal property of $\beta$ it has a unique continuous extension $\val{\tilde{v}}\colon \beta(X)\to \mathbb{S}(\alg{L})$. Here, $\val{\tilde{v}}^{-1}( \alg{S}{\downarrow} )$ is given by the topological closure of $v^{-1}(\alg{S}{\downarrow})$ in $\beta(X)$. Thus, for every morphism $f \colon (X,v) \to (Y,w)$ in $\SetL$, the continuous map $\beta f$ defines a morphism $(\beta(X), \val{\tilde{v}}) \to (\beta(Y),\val{\tilde{w}})$ in $\StoneL$. This is due to the observation that whenever $x\in \val{\tilde{v}^{-1}( S{\downarrow})} = \overline{v^{-1}( \alg{S}{\downarrow})}$, by continuity of $\beta f$ and the morphism property of $f$, we have $\beta f(x) \in \overline{w^{-1}(\alg{S}{\downarrow})} = \val{\tilde{w}}^{-1}(\alg{S}{\downarrow})$.  

\begin{proposition}\label{prop:StoneCech}
The functor $\beta_\alg{L}\colon \SetL \to \StoneL$ defined on objects by $$\beta_\alg{L}(X,v) = (\beta(X), \val{\tilde{v}})$$ and by $f \mapsto \beta f$ on morphisms is the dual of the canonical extension functor $(-)^\delta \colon \var{A}\to \CAA$.
\end{proposition}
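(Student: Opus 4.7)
The plan is to prove the proposition by establishing that the adjunctions $\beta_\alg{L} \dashv (-)^\flat$ and $\iota_c \dashv (-)^\delta$ correspond under the topological and discrete dualities. By the uniqueness of adjoints, it suffices to exhibit a natural isomorphism of functors $\SetL \to \var{A}$,
\[ \iota_c \circ \Pi^{disc}_\alg{L} \cong \Pi_\alg{L} \circ \beta_\alg{L}, \]
where $\Pi^{disc}_\alg{L} \colon \SetL \to \CAA$ is defined analogously to $\Pi_\alg{L}$ by $\Pi^{disc}_\alg{L}(X,v) = \SetL((X,v),(L,\bm{\langle\cdot\rangle}))$ with pointwise operations, forming the discrete duality supplied by Theorems \ref{thm:IndSetL} and \ref{thm:ProACAA}.

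I would construct this isomorphism via the universal property of the Stone-\v{C}ech compactification. For each $(X,v) \in \SetL$ and $g \in \Pi^{disc}_\alg{L}(X,v)$, the function $g \colon X \to L$ extends uniquely to a continuous $\tilde{g} \colon \beta(X) \to L$ (treating $L$ as a finite discrete Stone space). Assigning $g \mapsto \tilde{g}$ is an algebra homomorphism since operations are pointwise on both sides, and a bijection since every continuous $\beta(X) \to L$ is determined by its restriction to $X$, while $\tilde{v}|_X = v$ (which follows from the closure-based definition of $\tilde{v}$ on points already in $X$).

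The key technical step is verifying $\tilde{g}(y) \in \tilde{v}(y)$ for all $y \in \beta(X)$, so that $\tilde{g}$ really lies in $\Pi_\alg{L}(\beta_\alg{L}(X,v))$. For each subalgebra $\alg{S} \leq \alg{L}$, the hypothesis $g(x) \in v(x)$ yields $v^{-1}(\alg{S}{\downarrow}) \subseteq g^{-1}(S)$. Since $S$ is clopen in the discrete $L$ and $\tilde{g}$ is continuous, $\tilde{g}^{-1}(S)$ is closed in $\beta(X)$; taking closures of the inclusion gives
\[ \tilde{v}^{-1}(\alg{S}{\downarrow}) = \overline{v^{-1}(\alg{S}{\downarrow})} \subseteq \overline{g^{-1}(S)} \subseteq \tilde{g}^{-1}(S), \]
which unfolds to precisely the required constraint.

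Naturality in $(X,v)$ follows from the functoriality of $\beta$: given $f \colon (X,v) \to (Y,w)$ in $\SetL$ one has $\widetilde{g \circ f} = \tilde{g} \circ \beta f$. I expect the main obstacle to be the closure argument above, where the topological and subalgebra structures must interact correctly; however, since $\mathbb{S}(\alg{L})$ is finite only finitely many closed conditions are in play, and the compatibility is essentially baked into the definition of $\tilde{v}$ recorded in the paragraph preceding the proposition.
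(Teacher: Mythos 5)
Your core computation is correct and sits at the same technical heart as the paper's proof: both arguments come down to the universal property of the Stone--\v{C}ech compactification together with the closure argument $\overline{v^{-1}(\alg{S}{\downarrow})}\subseteq \tilde{g}^{-1}(S)$ that makes the extended map compatible with the subalgebra labelling. The packaging differs. The paper verifies the reflection property directly against an arbitrary $(Y,\val{w})\in\StoneL$: every $\SetL$-morphism $(X,v)\to(Y,\val{w})$ extends uniquely to a $\StoneL$-morphism $\beta_\alg{L}(X,v)\to(Y,\val{w})$, i.e.\ it proves the adjunction $\beta_\alg{L}\dashv(-)^\flat$ outright and then lets the dualities and uniqueness of adjoints do the rest. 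You instead test only against the dualizing object $(L,\bm{\langle\cdot\rangle})$ and assemble from this the natural isomorphism $\Pi_\alg{L}\circ\beta_\alg{L}\cong\iota_c\circ\Pi^{disc}_\alg{L}$ directly; this is a perfectly good alternative and arguably closer to the literal statement, at the price of having to invoke the discrete duality (Corollary \ref{cor:ProAIPSL}, Theorem \ref{thm:ProACAA}) and a known fact about $(-)^\delta$ to finish, where the paper's route gets the adjunction of Figure \ref{fig:compactification} for free.

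One assertion does need fixing: the adjunction between canonical extension and the inclusion goes the other way, $(-)^\delta\dashv\iota_c$, not $\iota_c\dashv(-)^\delta$. Already for Boolean algebras one has $\CABA(\alg{B}^\delta,\alg{C})\cong\BA(\alg{B},\iota_c\alg{C})$, whereas $\BA(\iota_c\alg{C},\alg{B})\not\cong\CABA(\alg{C},\alg{B}^\delta)$ in general (take $\alg{C}=\mathcal{P}(\mathbb{N})$ and $\alg{B}=\alg{2}$: the left-hand side consists of all ultrafilters on $\mathbb{N}$, the right-hand side only of the principal ones). The direction matters for your transfer step, since the contravariant dualities swap the handedness of adjoints: the left adjoint $\beta_\alg{L}$ must correspond to the right adjoint $\iota_c$, and $(-)^\flat$ to the left adjoint $(-)^\delta$ --- which is exactly what your isomorphism $\Pi_\alg{L}\circ\beta_\alg{L}\cong\iota_c\circ\Pi^{disc}_\alg{L}$ exhibits, so your own computation already forces the correct orientation. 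With that corrected (it also matches Figure \ref{fig:compactification}, read as in Theorem \ref{thm:subadjunction}), the uniqueness-of-adjoints argument, and hence your proof, goes through.
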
    

\begin{proof}
It suffices to show that $\beta_{\alg{L}}$ satisfies the following universal property. Given $(Y,\val{w})\in \StoneL$, every $\SetL$-morphism $f\colon (X,v) \to (Y,\val{w})$ extends uniquely to a $\StoneL$-morphism $\tilde{f}\colon (\beta(X), \val{\tilde{v}})\to (Y,\val{w})$. On the levels of $\Set$ and $\Stone$ we get a unique continuous extension $\tilde{f}$. To show it is a $\StoneL$-morphism, similarly to before, note that if $x\in \overline{v^{-1}( S{\downarrow})}$, then by continuity  
$$\tilde{f}(x) \in \overline{f\big(v^{-1}( \alg{S}{\downarrow} )\big)} \subseteq \overline{\val{w}^{-1}(\alg{S}{\downarrow})}.$$
Since $\val{w}^{-1}(\alg{S}{\downarrow})$ is closed it equals its own closure. This concludes the proof. 
\end{proof}  

This nicely wraps up this paper by connecting all of its main sections. In the last section we give a quick summary and discuss some possible directions of future research along similar lines.     
 
\section{Concluding Remarks and Further Research}\label{sec:conclusion}
We explored semi-primality by means of category theory, showing how a variety generated by a semi-primal lattice expansion relates to the variety of Boolean algebras. Various adjunctions provide insight into the many similarities between these varieties. A schematic summary of our results can be found in Figure \ref{fig:summary}, which also emphasizes once more how close $\BA$ and $\var{A}$ really are.   

\begin{figure}[ht]
\begin{tikzcd}
& \SetL^\omega \arrow[ddl, dashed, near start, "{\Pro}" description] \arrow[dr, dashed, "{\Ind}" description] \arrow[rrr, leftrightarrow] & & & \var{A}^\omega \arrow[dr, dashed, "{\Pro}" description] \\
& & \SetL \arrow[from = dd, shift left = 3]\arrow[dd, shift right = 1]\arrow[from = dd, shift right = 1] \arrow[dd, shift left = 3] \arrow[rrr, leftrightarrow] & & & \mathsf{CA}\var{A} \arrow[from = dd, shift left = 3]\arrow[dd, shift right = 1]\arrow[from = dd, shift right = 1] \arrow[dd, shift left = 3] \\
\StoneL \arrow[from = dd, shift left = 3]\arrow[dd, shift right = 1]\arrow[from = dd, shift right = 1] \arrow[dd, shift left = 3] \arrow[rru, shift right = 1] \arrow[from = rru, shift right = 1] \arrow[rrr, leftrightarrow, crossing over] & & & \var{A} \arrow[rru, shift right = 1] \arrow[from = rru, shift right = 1] \arrow[from = uur, crossing over, dashed, near start, "{\Ind}" description] & \\
& & \Set \arrow[from = ddl, dashed, near start, "{\Ind}" description] \arrow[rrr, leftrightarrow] & & & \CABA \\
\Stone \arrow[rru, shift right = 1] \arrow[from = rru, shift right = 1] & & & \BA \arrow[from = lll, leftrightarrow, crossing over] \arrow[uu, shift left = 3, crossing over]\arrow[from = uu, shift right = 1, crossing over] \arrow[uu, shift right = 1, crossing over]\arrow[from = uu, shift left = 3, crossing over]  \arrow[rru, shift right = 1] \arrow[from = rru, shift right = 1] & \\
& \Set^\omega \arrow[lu, dashed, "{\Pro}" description] \arrow[rrr, leftrightarrow] & & & \BA^\omega \arrow[lu, dashed, "{\Ind}" description] \arrow[uur, dashed, near start, "\Pro" description]
\end{tikzcd}
 \caption{Summary of our results.}
 \label{fig:summary}
\end{figure}
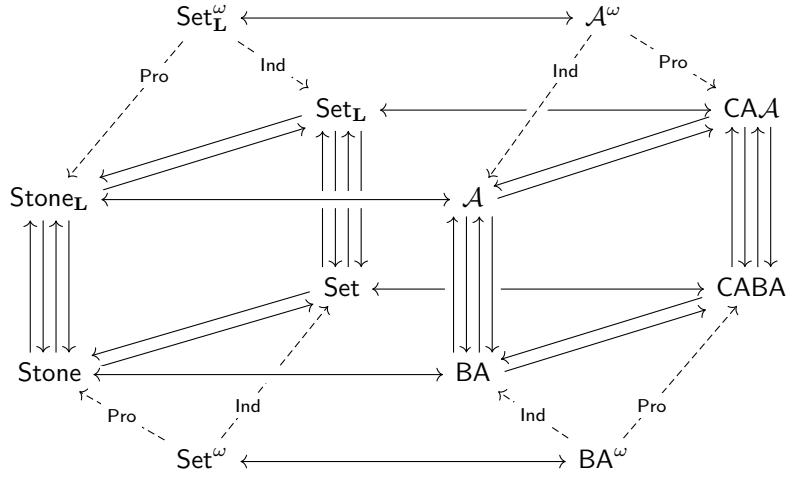

We plan to follow up this research by developing a coalgebraic framework for modal extensions of the many-valued logic corresponding to a semi-primal variety. As mentioned before, from this point of view it is reasonable to assume that $\alg{L}$ is based on a lattice. However, it seems plausible that our results generalize to the slightly more general case of semi-primal algebras which possess a \emph{coupling} in the sense of \cite{Foster1967}, essentially since Proposition \ref{SP-char-Ts} and Theorem \ref{SPDuality} still apply to this case.

We will now sketch some more potential ways to follow up this research. In general, we hope to have set an example in exploring concepts in universal algebra through the lens of (mostly elementary) category theory. 

For example, other variants of primality (see, \emph{e.g.}, \cite{Quackenbush1979, KaarliPixley2001}) could be investigated in a similar manner.
\begin{definition}\label{def:variantsofprimality}
A finite algebra $\alg{M}$ is called 
\begin{enumerate}
\item \emph{demi-semi-primal}  if it is quasi-primal and every internal isomorphism of $\alg{M}$ can be extended to an automorphism of $\alg{M}$ (see \cite{Quackenbush1971}).  
\item \emph{demi-primal} if it is quasi-primal and has no proper subalgebras (see  \cite{Quackenbush1971}).
\item \emph{infra-primal} if it is demi-semi primal and every internal isomorphism is an automorphism on its domain (see \cite{Foster1969}). 
\item \emph{hemi-primal} if every operation on $\alg{M}$ which preserves congruences is term-definable in $\alg{M}$ (see \cite{Foster1970}). 
\end{enumerate}
\end{definition} 
\begin{question}
What is the categorical relationship between $\BA$ and the variety generated by an algebra which is quasi-primal or which satisfies one of the properties of Definition \ref{def:variantsofprimality}? What about the relationship between distinct variations of primality to each other? 
\end{question}  
For quasi-primal algebras (and thus, in particular, for algebras satisfying (1), (2) or (3)), there is the duality theorem by Keimel-Werner \cite{KeimelWerner1974} (which is also a natural duality \cite{ClarkDavey1998}), possibly a good starting point to a discussion similar to the one presented here.

Hemi-primality seems to have received less attention. To the best of the authors knowledge, no duality for varieties generated by hemi-primal algebras is known thus far.   
\begin{question}
Is it possible to obtain a duality for hemi-primal varieties, for example one which stems from a finite dual equivalence using methods similar to our proof of semi-primal duality in Section \ref{sec:semi-primal duality}?   
\end{question}
The Boolean power functor $\M_\alg{M}\colon \BA \to \variety{\alg{M}}$ was defined for an arbitrary finite algebra $\alg{M}$. In the light of our results from Section \ref{sec:adcuntions}, the following question arises.  
\begin{question}
Under which circumstances does the functor $\M_\alg{M}$ have a left-adjoint? Which information about $\alg{M}$ can be retrieved from properties of the functors of the form $\M_\alg{S}$ with $\alg{S}\leq \alg{M}$?   
\end{question}     

If we consider this work as not only comparing varieties but \emph{comparing dualities}, another range of questions appears. 

\begin{question}
What is the category theoretical relationship between different dual equivalences? For example, one could consider \emph{Priestley duality} \cite{Priestley1970} or \emph{Esakia duality} \cite{Esakia1974}.    
\end{question}  

Lastly, another category theoretical approach to universal algebra, which has not been discussed in this paper, is given by Lawvere theories. For example, Hu's theorem has been analyzed from this angle in \cite{Porst2000}. Of course, one could also try to find out more about other variants of primality in this context. 
\begin{question}
How can semi-primality and other variants of primality be expressed in terms of Lawvere theories?  
\end{question}

\appendix
\section{Some semi-primal \texorpdfstring{$\FLew$}-algebras}\label{Appendix}
Here we go into more detail in some claims made in Subsection \ref{example:reslattices}. We provide examples of semi-primal $\FLew$-algebras, both chain-based and non chain-based, including the proof of semi-primality for each one of them. All of the examples and their labels are taken from the list \cite{GalatosJipsen2017} by Galatos and Jipsen. For simplicity we only discus $\FLew$-algebras without any idempotent elements other than $0$ and $1$. Due to Corollary $\ref{cor:QPFLEW}$ they are all quasi-primal. To prove semi-primality, by Proposition \ref{SP-char-discriminator}, it suffices to describe all subalgebras and argue why there can't be any non-trivial isomorphisms between then. 

We begin with the quasi-primal $\FLew$-chains of five elements $R^{5,1}_{1,17}$ to $R^{5,1}_{1,22}$ in \cite[p.2, row 2]{GalatosJipsen2017} depicted in Figure \ref{fig:QPFLewChains}.

\begin{figure}[ht]
\begin{tikzpicture}
  \node (1) at (0,3.6) {$1$};
  \node (a) at (0,2.7) {$a$};
  \node (b) at (0,1.8) {$b$};
  \node (c) at (0,0.9) {$c$};
  \node (0) at (0,0) {$a^2$};
  \node (label) at (0, -1) {$R^{5,1}_{1,17}$};
  \draw (1) -- (a) -- (b) -- (c) -- (0);
\end{tikzpicture}
\hspace{0.5cm}
\begin{tikzpicture}
  \node (1) at (0,3.6) {$1$};
  \node (a) at (0,2.7) {$a$};
  \node (b) at (0,1.8) {$b$};
  \node (c) at (0,0.9) {$c = a^2$};
  \node (0) at (0,0) {$ab$};
  \node (label) at (0, -1) {$R^{5,1}_{1,18}$};
  \draw (1) -- (a) -- (b) -- (c) -- (0);
\end{tikzpicture}
\hspace{0.5cm}
\begin{tikzpicture}
  \node (1) at (0,3.6) {$1$};
  \node (a) at (0,2.7) {$a$};
  \node (b) at (0,1.8) {$b = a^2$};
  \node (c) at (0,0.9) {$c$};
  \node (0) at (0,0) {$ab$};
  \node (label) at (0, -1) {$R^{5,1}_{1,19}$};
  \draw (1) -- (a) -- (b) -- (c) -- (0);
\end{tikzpicture}
\hspace{0.5cm}
\begin{tikzpicture}
  \node (1) at (0,3.6) {$1$};
  \node (a) at (0,2.7) {$a$};
  \node (b) at (0,1.8) {$b$};
  \node (c) at (0,0.9) {$c = a^2 = ab$};
  \node (0) at (0,0) {$b^2 = ac$};
  \node (label) at (0, -1) {$R^{5,1}_{1,20}$};
  \draw (1) -- (a) -- (b) -- (c) -- (0);
\end{tikzpicture}
\hspace{0.5cm}
\begin{tikzpicture}
  \node (1) at (0,3.6) {$1$};
  \node (a) at (0,2.7) {$a$};
  \node (b) at (0,1.8) {$b = a^2$};
  \node (c) at (0,0.9) {$c = ab$};
  \node (0) at (0,0) {$b^2 = ac$};
  \node (label) at (0, -1) {$R^{5,1}_{1,21}$};
  \draw (1) -- (a) -- (b) -- (c) -- (0);
\end{tikzpicture}
\hspace{0.5cm}
\begin{tikzpicture}
  \node (1) at (0,3.6) {$1$};
  \node (a) at (0,2.7) {$a$};
  \node (b) at (0,1.8) {$b$};
  \node (c) at (0,0.9) {$c= a^2 = b^2$};
  \node (0) at (0,0) {$ac$};
  \node (label) at (0, -1) {$R^{5,1}_{1,22}$};
  \draw (1) -- (a) -- (b) -- (c) -- (0);
\end{tikzpicture}
 \caption{The quasi-primal $\FLew$-chains of order five.}
 \label{fig:QPFLewChains}
\end{figure}
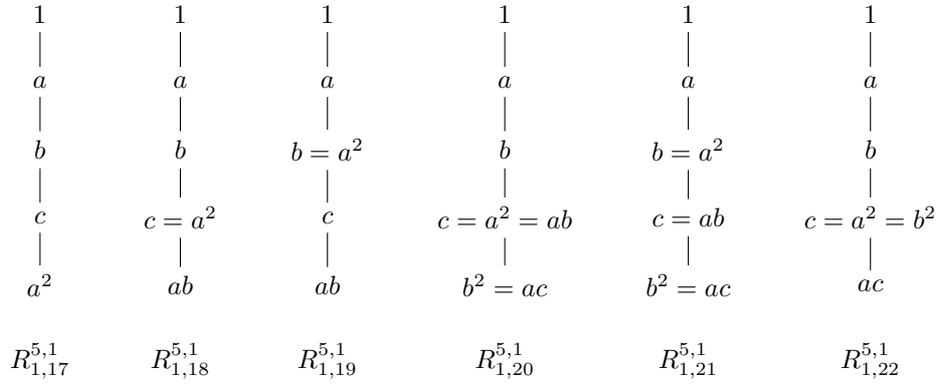 
\begin{claim}
Except for the first one, all algebras depicted in Figure \ref{fig:QPFLewChains} are semi-primal. 
\end{claim}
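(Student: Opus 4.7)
The plan is to invoke Proposition \ref{SP-char-discriminator}: a finite algebra is semi-primal if and only if it is quasi-primal and admits no non-trivial internal isomorphisms. Quasi-primality of each of the six algebras follows from Corollary \ref{cor:QPFLEW} once one checks that none admits idempotents other than $0, 1$ (immediate from the Hasse diagrams, since in each case every non-unit element is nilpotent). The task then reduces to ruling out non-trivial internal isomorphisms for $R^{5,1}_{1,18}, \dots, R^{5,1}_{1,22}$. Since each of these is a chain, the only lattice-automorphism of any subalgebra is the identity, so a non-trivial internal isomorphism would have to be an order-isomorphism between two \emph{distinct} subalgebras of the same cardinality. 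It therefore suffices to enumerate all subalgebras of each algebra and verify that they have pairwise distinct cardinalities.

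The enumeration rests on the following observation. Every subalgebra $\alg{S}$ contains $0$ and $1$ and is closed under $\wedge, \vee$ (automatic in a chain), $\odot$, and $\rightarrow$. The crucial restriction is closure under negation: for each $x \in \alg{S}$,
$$\neg x = x \rightarrow 0 = \bigvee\{y \in R : x \odot y = 0\}$$
must lie in $\alg{S}$. To apply this, I first compute the full multiplication tables. The Hasse diagrams specify the nontrivial products explicitly (e.g.\ $a^2 = c$ and $ab = 0$ in $R^{5,1}_{1,18}$), and the remaining products are forced by monotonicity of $\odot$ together with $xy \leq x \wedge y$ (a consequence of $e = 1$). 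Once the tables are in hand, each candidate subset can be mechanically tested against the closure requirements.

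The outcome of the case analysis is the following list of subalgebras (by underlying set):
$\{0,1\}$ and $R^{5,1}_{1,18}$ (sizes $2, 5$);
$\{0,1\}$, $\{0, a, b, 1\}$, and $R^{5,1}_{1,19}$ (sizes $2, 4, 5$);
$\{0,1\}$, $\{0, b, 1\}$, $\{0, a, c, 1\}$, and $R^{5,1}_{1,20}$ (sizes $2, 3, 4, 5$);
$\{0,1\}$, $\{0, b, 1\}$, and $R^{5,1}_{1,21}$ (sizes $2, 3, 5$);
$\{0,1\}$, $\{0, a, c, 1\}$, and $R^{5,1}_{1,22}$ (sizes $2, 4, 5$).
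In every case the cardinalities are pairwise distinct, and semi-primality follows.

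The main obstacle is the mechanical bookkeeping required to discard all other candidate subsets, since for each candidate one must check closure under both $\odot$ and $\rightarrow$. This is made tractable by the nilpotency of all nonunit elements (which collapses most products to $0$) and by the simple criterion that if $\alg{S}$ contains some $x$ with $\neg x \notin \alg{S}$, then $\alg{S}$ is not a subalgebra. As an aside, the same method applied to the excluded algebra $R^{5,1}_{1,17}$ immediately produces three distinct three-element subalgebras $\{0, a, 1\}, \{0, b, 1\}, \{0, c, 1\}$, all mutually isomorphic (since $a^2 = 0$ forces $b^2 = c^2 = 0$), which explains why $R^{5,1}_{1,17}$ is \emph{not} semi-primal.
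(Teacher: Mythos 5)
Your proof of the claim itself is correct and essentially the paper's own argument: quasi-primality via Corollary \ref{cor:QPFLEW}, then Proposition \ref{SP-char-discriminator}, ruling out non-trivial internal isomorphisms by enumerating the subalgebras and observing that (being chains, hence rigid) no two distinct subalgebras share a cardinality; your subalgebra lists for $R^{5,1}_{1,18},\dots,R^{5,1}_{1,22}$ agree exactly with the paper's.

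One flaw, though only in your inessential aside: in $R^{5,1}_{1,17}$ all products of non-unit elements are $0$ (since $x\odot y \leq a^2 = 0$), so $\neg b = \neg c = a$, and hence $\{0,b,1\}$ and $\{0,c,1\}$ are \emph{not} closed under $\rightarrow$ and are not subalgebras --- exactly the negation-closure criterion you stress elsewhere. The correct witnesses to the failure of semi-primality are the isomorphic four-element subalgebras $\{0,1,a,b\}$ and $\{0,1,a,c\}$, as in the paper.
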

\begin{proof}
$R^{5,1}_{1,17}$ is not semi-primal because it has isomorphic subalgebras $\{ 0,1, a, b \}$ and $\{ 0,1, a, c \}$.  

In the following we show why the other ones are semi-primal by describing the subalgebras other than the obvious ones  $\{ 0,1 \}$ and $\{ 0,1,a,b,c\}$. Since isomorphisms need to be order-preserving, it suffices to note that there are never two subalgebras of the same size in the examples below. 

$R^{5,1}_{1,18}$: There are no other subalgebras since $\{ \neg a, a^2 \} = \{ b, c \} \subseteq \langle  a \rangle$ and $\neg b = \neg c = a$, thus $a\in \langle b \rangle$ and $a\in \langle c \rangle$. \\
$R^{1,5}_{1,19}$: There is the subalgebra $\langle a \rangle = \langle b \rangle = \{ 0,1, a, b \}$ since $a\rightarrow b = a$, $\neg a = b$ and $\neg b = a$. Since $a = \neg c$ we have $a\in \langle c \rangle$, so $c$ generates the entire algebra. 

$R^{5,1}_{1,20}$: There are two different sized subalgebras $\langle a \rangle = \langle c \rangle = \{0,1,a,c\}$ (since $\neg a = c, \neg c = a$ and $a\rightarrow c = a$)  and $\langle b \rangle = \{ 0,1, b \}$ (since $\neg b = b\rightarrow b = b$) 

$R^{5,1}_{1,21}$: Note that this algebra corresponds to the \L ukasiewicz-chain $\lucas_4$. As thus expected, there is the subalgebra $\langle b \rangle = \{ 0,1, b \}$, while $b\in \langle a \rangle \cap \langle c \rangle$ since $a = \neg c, c = \neg a$ and $b = a^2$. 

$R^{5,1}_{1,22}$: There is the subalgebra $\langle a \rangle = \langle c \rangle = \{ 0,1,a,c \}$ (since $\neg a = c$, $\neg c = a$ and $a \rightarrow c = a$). Since $\neg b = c$ and $\neg c = a$ we find that $b$ generates the entire algebra.   
\end{proof}

To also provide non chain-based examples, we examine the $\FLew$-algebras $R^{6,2}_{1,11}$ (\cite[p.19, row 5]{GalatosJipsen2017}) and $R^{6,3}_{1,9}$ (\cite[p.21, row 3]{GalatosJipsen2017}) depicted in Figure \ref{fig:QPFLewAlgs}.

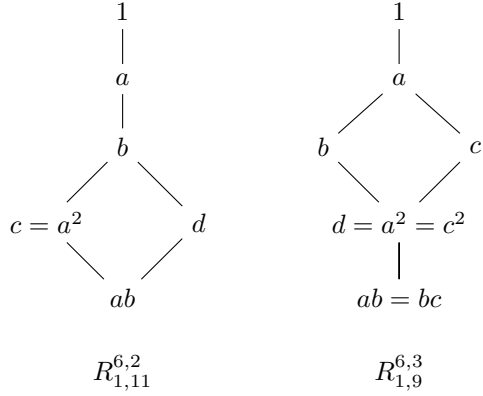
\begin{figure}[ht]
\begin{tikzpicture}
  \node (1) at (0,3.8) {$1$};
  \node (a) at (0,2.9) {$a$};
  \node (b) at (0,2) {$b$};
  \node (c) at (-1,1) {$c = a^2$};
  \node (d) at (1,1) {$d$};
  \node (0) at (0,0) {$ab$};
  \node (label) at (0, -1) {$R^{6,2}_{1,11}$};
  \draw (1) -- (a) -- (b) -- (c) -- (0) -- (d)-- (b);
\end{tikzpicture}
\hspace{1cm}
\begin{tikzpicture}
  \node (1) at (0,3.8) {$1$};
  \node (a) at (0,2.9) {$a$};
  \node (b) at (-1,2) {$b$};
  \node (c) at (1,2) {$c$};
  \node (d) at (0,1) {$d = a^2 = c^2$};
  \node (0) at (0,0) {$ab = bc$};
  \node (label) at (0, -1) {$R^{6,3}_{1,9}$};
  \draw (1) -- (a) -- (b) -- (d) -- (0) -- (d) -- (c) -- (a);
\end{tikzpicture}
 \caption{Two semi-primal $\FLew$-algebras of order six.}
 \label{fig:QPFLewAlgs}
\end{figure}

\begin{claim}
The two $\FLew$-algebras depicted in Figure \ref{fig:QPFLewAlgs} are semi-primal.
\end{claim}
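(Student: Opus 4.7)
The strategy mirrors the proof of the preceding claim for the chain examples. Since both $R^{6,2}_{1,11}$ and $R^{6,3}_{1,9}$ are $\FLew$-algebras, Corollary~\ref{cor:QPFLEW} tells us it suffices to check that their only idempotent elements (under $\odot$) are $0$ and $1$ in order to conclude quasi-primality. Inspection of the multiplication tables in \cite{GalatosJipsen2017} confirms this for both algebras: every element $x \notin \{0,1\}$ satisfies $x \odot x < x$. With quasi-primality established, Proposition~\ref{SP-char-discriminator} reduces the claim to showing that the only internal isomorphisms are identities on subalgebras.

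The key step is therefore to catalogue the subalgebras of each algebra. The plan is to compute $\langle x \rangle$ for each $x$ in the algebra by iteratively closing under $\wedge$, $\vee$, $\odot$, and $\rightarrow$ (the latter encoded via $\neg x = x \rightarrow 0$ together with products). For $R^{6,2}_{1,11}$ the relations $a^2 = c$ and $ab = 0$ force $\langle a \rangle$ to be fairly large, and one reads off $\neg$ from the product structure to identify subalgebras generated by $b$, $c$, and $d$. For $R^{6,3}_{1,9}$ the symmetry between $a$ and $c$ (both satisfy $x^2 = d$) suggests a potential internal isomorphism swapping them, and this has to be examined carefully.

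Once the subalgebras are listed, I would rule out non-trivial internal isomorphisms in two ways: first, any subalgebras of different cardinality are not isomorphic, so only same-sized subalgebras need be compared; second, an internal isomorphism must respect the lattice order and the $\odot$-structure, which is extremely restrictive. In $R^{6,2}_{1,11}$, the asymmetric roles of $c$ and $d$ (only $c$ lies in the submonoid generated by $a$, while $d$ does not arise as a product of other non-trivial elements) prevent any isomorphism from swapping them. In $R^{6,3}_{1,9}$, even though $a$ and $c$ are symmetric with respect to $\odot$-squaring, one can check that $a$ and $c$ play different roles with respect to $b$ (for instance, $a \odot b = 0 = b \odot c$ but the value of $a \rightarrow b$ differs from $c \rightarrow b$, or the residuals distinguish them), so any candidate isomorphism is ruled out.

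The main obstacle is the careful bookkeeping: verifying the subalgebra lattice requires knowing the full $\odot$- and $\rightarrow$-tables, and for $R^{6,3}_{1,9}$ the visual symmetry between $a$ and $c$ makes it tempting to conclude there is a non-trivial internal isomorphism, so one must pin down an operation that breaks this symmetry. Once this is done, Proposition~\ref{SP-char-discriminator} delivers semi-primality immediately.
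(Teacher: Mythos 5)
Your overall strategy (quasi-primality via Corollary~\ref{cor:QPFLEW}, then ruling out non-trivial internal isomorphisms and invoking Proposition~\ref{SP-char-discriminator}, with the subalgebras catalogued by computing the $\langle x\rangle$'s) is exactly the paper's, and your treatment of $R^{6,2}_{1,11}$ is fine: the only order-preserving candidate swaps $c$ and $d$, and since it fixes $a$ it would have to fix $a^2=c$, which is your ``only $c$ is a product of non-trivial elements'' argument.

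For $R^{6,3}_{1,9}$, however, there is a genuine gap: you aim at the wrong symmetry and the distinguishing computation you offer is false. A map exchanging $a$ and $c$ is not even a candidate internal isomorphism: it cannot be an automorphism of the whole algebra since $c<a$ and order must be preserved (indeed $a$ is the unique coatom, so every automorphism fixes it), and it cannot involve a proper subalgebra because $c$ generates the whole algebra ($c^2=d$, $\neg d=a$, $\neg a=b$), so no proper subalgebra contains $c$. Moreover your proposed witness fails: since $c\leq a$ and $a^2=d\leq b$, monotonicity of $\odot$ gives $a\odot c\leq d\leq b$, hence $a\rightarrow b=c\rightarrow b=a$, so the residuals against $b$ do \emph{not} separate $a$ from $c$. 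The candidate you actually must eliminate is the order-preserving transposition of the two incomparable elements $b$ and $c$ (the unique non-trivial automorphism of the underlying lattice), and this is killed by squaring: $b^2=0$ while $c^2=d$, so the swap is not a homomorphism. Without this step the semi-primality of $R^{6,3}_{1,9}$ is not established; with it, your argument matches the paper's proof.
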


\begin{proof}
$R^{6,2}_{1,11}$: The only possible candidate for an automorphism of this algebra is the bijection $f$ exchanging $c$ and $d$ (since it needs to be order-preserving). This map, however, is not a homomorphism, as witnessed by the fact that $f(a^2) = f(c) = d$ while $f(a)^2 = a^2 = c$. The only other subalgebra other than $\{ 0,1 \}$ is $\langle a \rangle = \{ 0,1, a,b, c \}$ since we have $\neg a = b$, $a^2 = c$, $\neg c = a$, $a\rightarrow b = a$, $a \rightarrow c = a$ and $b \rightarrow c = a$. Since this subalgebra is a chain, it does not have any non-trivial isomorphisms. Since $\neg d = a$ we know that $d$ generates the entire algebra, so there are no more subalgebras to consider.   

$R^{6,3}_{1,9}$: Again, there is only one possible candidate for an automorphism of this algebra, namely the bijection $g$ exchanging $b$ and $c$. This is not a homomorphism because $g(b^2) = g(0) = 0$ while $g(b)^2 = c^2 = d$. The only other subalgebra except $\{ 0,1 \}$ is $\langle a \rangle = \{ 0,1,a,b,d \}$ since $\neg a = b$,  $\neg b = \neg d = a$ and $a\rightarrow b = a \rightarrow d = b\rightarrow d = a$. This subalgebra has no non-trivial isomorphisms because it is a chain. Since $c^2 = d$, the element $c$ generates the entire algebra.  
\end{proof} 

\section*{Acknowledgments}
The second author is supported by the Luxembourg National Research Fund under the project  PRIDE17/12246620/GPS. 
   
\bibliographystyle{acm} 
\bibliography{References}  

\end{document}